\newtheorem{lemma}{Lemma}[section]
\newtheorem{proposition}[lemma]{Proposition}
\newtheorem{definition/proposition}[lemma]{Definition/Proposition}
\newtheorem{corollary}[lemma]{Corollary}
\newtheorem{theorem}[lemma]{Theorem}
\theoremstyle{definition}
\newtheorem{definition}[lemma]{Definition}
\newtheorem{convention}[lemma]{Convention}
\newtheorem{example}[lemma]{Example}
\newtheorem*{warning*}{Warning}
\newtheorem{problem}[lemma]{Problem}
\theoremstyle{remark} 
\newtheorem{remark}[lemma]{Remark}
\newcommand{\C}{\mathbb{C}}
\newcommand{\D}{\mathbb{D}}
\renewcommand{\L}{\mathbb{L}}
\renewcommand{\P}{\mathbb{P}}
\newcommand{\R}{\mathbb{R}}
\newcommand{\T}{\mathbb{T}}
\newcommand{\Z}{\mathbb{Z}}
\newcommand{\cA}{\mathcal{A}}
\newcommand{\cC}{\mathcal{C}}
\newcommand{\cD}{\mathcal{D}}
\newcommand{\cE}{\mathcal{E}}
\newcommand{\cF}{\mathcal{F}}
\newcommand{\cG}{\mathcal{G}}
\newcommand{\cK}{\mathcal{K}}
\newcommand{\cL}{\mathcal{L}}
\newcommand{\cM}{\mathcal{M}}
\newcommand{\cP}{\mathcal{P}}
\newcommand{\cX}{\mathcal{X}}
\newcommand{\bR}{\mathbb{R}}
\newcommand{\bZ}{\mathbb{Z}}
\newcommand\into{\hookrightarrow}
\newcommand*{\hrlen}{10}
\newcommand*{\hramp}{3}
\tikzset{
asdstyle/.style={blue,thick},
righthairs/.style={postaction={decorate,draw,decoration={border,amplitude=\hramp,segment length=\hrlen,angle=-90,pre=moveto,pre length=\hrlen/2}}},
lefthairs/.style={postaction={decorate,draw,decoration={border,amplitude=\hramp,segment length=\hrlen,angle=90,pre=moveto,pre length=\hrlen/2}}},
righthairsnogap/.style={postaction={decorate,draw,decoration={border,amplitude=\hramp,segment length=\hrlen,angle=-90}}},
lefthairsnogap/.style={postaction={decorate,draw,decoration={border,amplitude=\hramp,segment length=\hrlen,angle=90}}},
graphstyle/.style={thick},
arrowstyle/.style={thick,decorate,decoration={snake,amplitude=1.7,segment length=10pt,post length=.5mm,pre length=0}},
genmapstyle/.style={thick,-stealth'},
arrhdstyle/.style={thick},
exceptarcstyle/.style={red, ultra thick},
dualquiverstyle/.style={thick,->}
}
\DeclareMathOperator{\Spec}{Spec}
\DeclareMathOperator{\Hom}{Hom}
\DeclareMathOperator{\Id}{Id}
\newcommand{\Fuk}{\mathit{Fuk}}
\newcommand{\ghat}{\scalebox{1.5}{$\upmodels$}}
\newcommand{\mutghat}{\scalebox{1.5}{$\downmodels$}}
\newcommand{\TC}{T^+_{\cC}\cL}
\newcommand{\TCk}{T^+_{\cC_{(k)}}\cL_{(k)}}
\newcommand{\coeffs}{\mathbbm{k}}
\newcommand{\muloc}{\mu \mathit{loc}}
\newcommand{\muLoc}{\mu \mathit{Loc}}
\newcommand{\Mut}{\mathit{Mut}}
\newcommand{\loc}{\mathit{loc}}
\newcommand{\Loc}{\mathit{Loc}}
\newcommand{\congto}{\xrightarrow{\sim}}
\newcommand\dmod{\textrm{-mod}}
\newcommand{\sh}{\mathit{sh}}
\newcommand{\bigperp}{%
  \mathop{\mathpalette\bigp@rp\relax}%
  \displaylimits
}
\newcommand{\bigp@rp}[2]{%
  \vcenter{
    \m@th\hbox{\scalebox{\ifx#1\displaystyle2.1\else1.5\fi}{$#1\perp$}}
  }%
}
\newcommand{\eeT}{{\bigperp}}
\newcommand{\lcirc}{\rotatebox[origin=c]{90}{$\circlearrowleft$}}
\author{Vivek Shende, David Treumann, and Harold Williams}
\title{On the  combinatorics of exact Lagrangian surfaces}
\date{}
\begin{document}

\begin{abstract}
We study Weinstein 4-manifolds which admit Lagrangian skeleta given by attaching 
disks to a surface along a collection of simple closed curves.  In terms of the curves
describing one such skeleton, we describe surgeries that preserve the ambient Weinstein manifold, but
change the skeleton.  The surgeries can be iterated to produce more
such skeleta  --- in many cases, infinitely many more. 

Each skeleton is built around a Lagrangian surface.  Passing to the Fukaya category, 
the skeletal surgeries induce cluster transformations on the spaces of rank one local systems on these surfaces,
and noncommutative analogues of cluster transformations on the spaces of higher rank local systems. 
In particular, the problem of producing and distinguishing such Lagrangians 
maps to a combination of combinatorial-geometric questions about 
curve configurations on surfaces and algebraic questions
about exchange graphs of cluster algebras. 

Conversely, this expands the dictionary relating the cluster theory of character varieties, positroid strata, and related spaces to the symplectic geometry of Lagrangian fillings of Legendrian knots, by incorporating cluster charts more general than those associated to bicolored surface graphs. 
\end{abstract}

\pagenumbering{Alph}
%\begin{titlepage}
\maketitle
\thispagestyle{empty}
%\end{titlepage}
\pagenumbering{arabic}

\vspace{10mm}

\begin{center}
\includegraphics{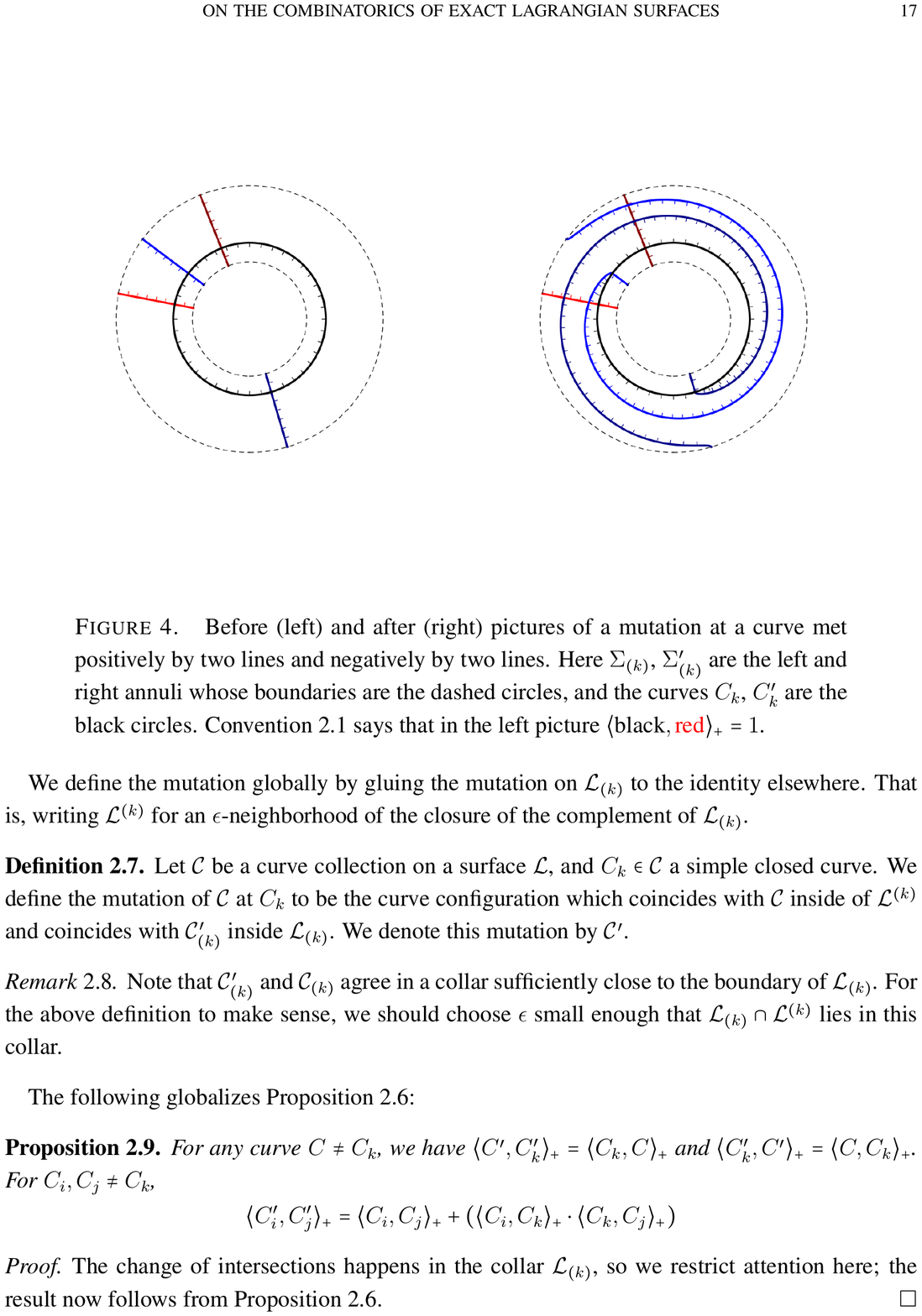}
\end{center}

\newpage

%\maketitle

%!TEX root = COS2.tex

\vspace{10mm}

%\begin{center}
%\includegraphics[scale = .35]{half-dehn-left-2.png}
%\includegraphics[scale = .35]{half-dehn-right-2.png}
%\end{center}

\thispagestyle{empty}

\newpage

\section{Introduction}\label{sec:intro}

The zero section of a cotangent bundle is the prototypical example of 
an exact Lagrangian embedded in an exact symplectic manifold.  Producing and distinguishing such Lagrangians is a 
basic problem 
in symplectic geometry.  One benchmark is the standing conjecture of Arnol'd that the zero section gives the only compact exact Lagrangian in a cotangent bundle, up to Hamiltonian isotopy.  
Together with the Weinstein neighborhood theorem, the Arnol'd conjecture 
suggests that exact Lagrangians have a discrete
nature:  up to Hamiltonian isotopy, they should have no moduli.

Our purpose here is to introduce a mechanism 
for producing and distinguishing large collections of exact Lagrangian surfaces. 
We assume we are given one exact Lagrangian surface, $\cL$, to which a collection of 
Lagrangian disks are attached along smooth circles, forming a singular Lagrangian $\L$.  
We work in a neighborhood $W$ of the skeleton $\L$. 
After collapsing one of the attached disks so that $\cL$ acquires a singularity, 
there are two choices of Lagrangian surgery \cite{LS,Pol} ---  one returning the original
surface $\cL$, and one 
yielding a new exact Lagrangian surface $\cL'$ which is smoothly, but not Hamiltonianly,
isotopic to the original surface.  The transition $\cL \rightsquigarrow \cL'$ is the Lagrangian disk surgery of M.-L. Yau 
\cite{Y}.  

The basic geometric contribution of this paper is to explain how the entire skeleton may be carried 
through this transition, giving a new skeleton $\L'$ extending the new Lagrangian surface.
This skeletal surgery $\L \rightsquigarrow \L'$ in turn has a combinatorial description in terms of operations on configurations of curves in $\cL$ --- the projections of the attaching Legendrians used to build the skeleta. 
For constructing
exact Lagrangians, the point is that this procedure can now be iterated: we can use the disks in $\L'$ 
to perform surgeries on $\cL'$, and get more exact Lagrangians.

This branching 
production of Lagrangians by a local surgery procedure --- potential sequences of surgeries are indexed by an $n$-ary tree, if $n$ disks were attached --- geometrizes the notion of quiver mutation. Indeed, $\L$ defines a quiver: its vertices index the curves along which disks are attached, and its arrows record the intersection numbers of their projections to $\cL$. 
This quiver undergoes a mutation when we perform a skeletal surgery. 

Associated to a quiver and its mutations is a cluster algebra \cite{FZ} --- the coordinate ring of a space built from open algebraic tori (cluster charts) labeling the vertices of the $n$-ary tree, glued along certain birational maps (cluster transformations). 
These arise in various contexts such as canonical bases and total positivity in Lie theory \cite{Fom,GLS} and character varieties of punctured surfaces \cite{FG,GSV2}. 

In our setting this structure appears as follows: $\L$ carries a sheaf of categories $\muloc$;
we will be interested in the global sections $\muloc(\L)$, which we term microlocal sheaves on $\L$. 
We will show that 
skeletal surgery $\L \rightsquigarrow \L'$ induces an equivalence $\muloc(\L) \cong \muloc(\L')$.
On the other hand, there is a natural inclusion of the category of local systems on the original surface into the category
of microlocal sheaves on the skeleton.  
Of particular interest are the rank one local systems, the category of which we write as $Loc_1(\cL)$; 
note these are parameterized by an algebraic torus.
Thus the skeletal surgery induces a comparison of algebraic tori: 
\begin{equation}\label{eq:clustertransintro}
Loc_1(\cL) \subset \muloc(\L) \cong \muloc(\L') \supset Loc_1(\cL').
\end{equation}
We show this to be the cluster $\cX$-transformation associated to the 
quiver mutation described above. 
More generally, the corresponding comparison on higher rank local systems is given by a nonabelian version of a cluster transformation.  

In particular, it follows that the images of $Loc_1(\cL)$ and $Loc_1(\cL')$ in $\muloc(\L)$  are different.
This fact holds geometric significance. 
Indeed, 
according to a conjecture of Kontsevich \cite{K}, perfect modules over the wrapped Fukaya category 
of $W$ are the global sections of a certain sheaf of categories over $\L$.  This is known in the case of cotangent bundles \cite{NZ,  N1, FSS}.  More generally,  the expected sheaf can be described explicitly in terms of the microlocalization 
theory of Kashiwara and Schapira \cite{KS}; it is our sheaf $\muloc$. 
The inclusion $loc(\cL) \to \muloc(\L)$  corresponds to 
the pullback of perfect module categories
along the Viterbo restriction functor \cite{AS} for the inclusion $T^*\cL  \subset W$. 
For disk surgery on a torus, 
Equation \ref{eq:clustertransintro} corresponds to the  wall-crossing transformation computed  in \cite{Aur}, 
and expressed explicitly as a cluster transformation in \cite[Prop. 11.8]{Sei2}.

Accepting this conjectural package, it follows that  $\cL$ and $\cL'$ cannot be Hamiltonian isotopic: otherwise the images of $Loc_1(\cL)$ and $Loc_1(\cL')$ in the Fukaya category would necessarily coincide.
Moreover, since we have established that skeletal surgeries induce cluster transformations, 
we can employ cluster algebra to compute --- and, in particular, distinguish --- 
the algebraic tori $Loc_1(\cL)$ and $Loc_1(\cL'')$, even when $\cL$ and $\cL''$ 
are related by a longer sequence of surgeries.  The (conjectural) Hamiltonian isotopy invariance of 
the cluster chart associated to a Lagrangian implies that solving the algebraic/combinatorial problem
of distinguishing cluster charts in fact solves the symplecto-geometric problem of distinguishing the Lagrangians.
A general cluster variety has infinitely many distinct cluster charts \cite{FZ2}, so  the
cluster chart associated to a Lagrangian is a strong enough invariant to distinguish infinitely many 
Lagrangians. 

However, for a given $\L$ it may not be possible to lift an arbitrary sequence of quiver mutations to a corresponding sequence of skeletal surgeries.
 This is because 
of the following subtlety: while the surgery $\L \rightsquigarrow \L'$ always results in a skeleton
which can be built from a surface by attaching handles along Legendrian lifts of curves, the surgery can create self-intersections in these curves. Our surgery does not apply to 
disks attached along curves with self-intersections, so this results in an obstruction to subsequent mutations.  
We will show that when $L$ is a torus and 
the curve collection is geodesic, this issue never arises and arbitrary mutations can be performed; 
this is related to the constructions of \cite{Sym, Via}.

\vspace{4mm}
In another direction, the present construction extends the reach of the dictionary established in
\cite{STWZ}.  There, we gave a symplectic interpretation of the relation between cluster algebras
and bicolored graphs on surfaces \cite{Pos, FG, GK}.  
This went as follows: a surface $\Sigma$ and 
a Legendrian knot $\Lambda$ in the contact boundary of $T^*\Sigma$ 
together determine a certain moduli space; an exact Lagrangian
filling of $\Lambda$ determines a toric chart; and a bicolored graph $\Gamma$ on $\Sigma$ 
determines a Legendrian knot 
$\Lambda$ together with a canonical filling $\cL$. 
It is well known that, for the resulting cluster structures, not all cluster charts 
can be realized by bicolored graphs.  In particular, the vertices of the quiver associated to a bicolored graph are 
named by the faces of the graph; one can perform an abstract quiver mutation at any of these, but the new cluster comes from another bicolored graph only when the face is a square. 

In \cite{STWZ}, we raised the possibility 
that the remaining charts come from Lagrangian fillings which do not arise from bicolored graphs. 
The present technology allows us to construct such fillings. 
The first step is to change perspective from that of the surface $\Sigma$ containing the bicolored graph, to that of the associated Lagrangian filling $\cL$.
We recover $T^*\Sigma$ as the result of attaching handles to $T^* \cL$; the associated Lagrangian disks are exactly the faces of the bicolored graph. 
The filling $\cL$ has boundary $\Lambda$, but our constructions still make sense in this context.  
When $\cL$ has boundary,  $\muloc(\L)$ should correspond to the ``partially wrapped'' Fukaya category
where  $\partial \cL \subset \partial W$ serves as a Legendrian
stopper.

In particular, in the framework of the present paper we can perform a skeletal surgery on any face, square or not, giving a symplecto-geometric description of the chart resulting from the corresponding mutation.
Though beyond the scope of bicolored graphs, the resulting theory is still combinatorially explicit in the sense of being
completely encoded in configurations of curves on $\cL$. Moreover, by allowing Weinstein
manifolds more general than cotangent bundles, the present framework 
captures cluster structures (i.e., equivalence classes
of quivers) more general than those realized by bicolored graphs.

\vspace{10mm}

\subsection{Main results}

We outline more formally the main definitions and results of the article.  Our basic data-set is a configuration
of curves on a surface: 

\begin{definition} \label{def:configuration}
Let $\cL$ be a topological surface.  A {\em curve configuration} on $\cL$ will mean a set 
of properly immersed, co-oriented, pairwise transverse curves on $\cL$. Here a co-orientation of a curve is a choice of one of the two orientations of its conormal bundle.  
If $\cL$ has boundary, then we allow the curves to end on the boundary of $\cL$. 
\end{definition}

This data encodes a Lagrangian skeleton of a Weinstein 4-manifold.  The underlying topological
space of the skeleton is the following: 

\begin{definition} \label{def:L}
Let $\cC$ be a curve configuration on $\cL$.  We write $\L$ for the topological space formed by gluing 
one disk to $\cL$ along each curve in $\cC$. In the case that $\cL$ has boundary along which
a curve $C$ ends, we glue a half-disk to that $C$.
\end{definition}

The 4-manifold is formed by attaching Weinstein handles \cite{W} -- note that a co-orientation of an immersed curve is the same data as a lift to a Legendrian in the contact boundary $T^\infty \cL$ of $T^*\cL$.

\begin{definition}\label{def:W}
For a curve configuration $\cC$ on a surface $\cL$, we write $W := W_\mathcal{C}$ for the Weinstein
4-manifold formed by attaching Weinstein handles to $T^* \cL$ along the Legendrian lifts of closed curves in $\cC$
to their co-orientations. 
\end{definition}

We write $\TC \subset T^*\cL$ for the zero section together with the cones over the Legendrian lifts of the $C_i$, and realize $\L$ inside $W$ as the union of $\TC$ with the cores of the handles. It is a Lagrangian skeleton of $W$ in the sense that it is 
the complement in $W$ of the locus escaping to infinity under a natural Liouville flow; in particular, 
it is a retract of $W$.
In case $\cL$ has boundary, then $\partial \L$ has boundary coming from the union
 of the boundary of $\cL$ with the boundaries of the half-disks attached along 
any curves which end along $\partial \cL$.  This is naturally viewed as a singular Legendrian
in $\partial W$.  See \cite{N2} for more discussion of skeleta in this context.

We refer to $\L$ as a seed skeleton, as it is a singular Lagrangian incarnation of a seed of a cluster algebra. 
We follow Fock and Goncharov, for whom this denotes a collection of elements 
in a lattice equipped with a skew-symmetric form \cite{FG}. 
Often one encodes this data as a quiver without oriented 2-cycles, whose vertices are the given lattice elements and whose arrows record the pairings between them. 
The configuration $\cC$ naturally gives rise to a seed: 
the lattice is $H_1(\cL;\Z)$ with its intersection pairing, and the elements are the classes of the $C_i$. 
Note that this seed and the Weinstein manifold $W$ depend only on the Legendrian isotopy classes of the lifts of the $C_i$.
With this in mind we have:

\begin{definition}
An isotopy of a curve configuration is an isotopy of its constituent curves that arises from a Legendrian isotopy of their lifts. Concretely this means whenever two curves become tangent during the isotopy, their co-orientations at any point of tangency are opposite. 
\end{definition}

\begin{figure}
\begin{center}
\includegraphics{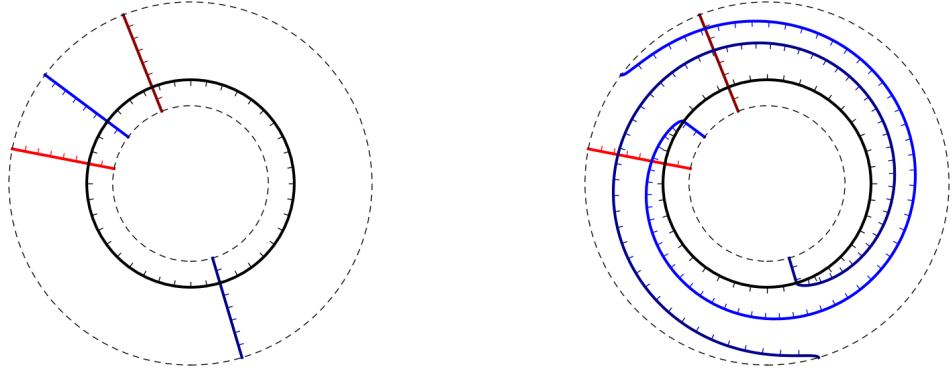}
\end{center}
\caption{Local pictures before (left) and after (right) mutation at an embedded curve (here, the black circular curve). We represent the co-orientations of the curves as hairs pointing to one side. By convention we say, for example, that on the left the blue curve intersects the black circle positively. Mutation twists the curves around the black circle wherever they intersect it positively, and leaves them alone wherever they intersect it negatively.}\label{fig:mutateintro}
\end{figure}

In cluster algebra, there is fundamental operation on seeds called {\em mutation}, determined by the choice
of one of the lattice elements determining the seed, or equivalently, a vertex of the quiver. 
We lift this to an operation of mutation at any simple closed curve $C_k$ in $\cC$; that is, the image of an embedding $S^1 \to \cL$. 
The result is a new curve configuration $\mu_k(\cC) = \{C_i' \}$ obtained from $\cC$ by twisting the other curves around $C_k$ according to the orientations of their intersections with $C_k$; see Definition \ref{def:mutation} and Figure \ref{fig:mutateintro}. 
We write $\cC'$ for $\mu_k(\cC)$ when $k$ is understood, and for clarity denote the surface on which the new configuration sits as $\cL'$.
Writing $\L' := \mu_k(\L)$, and $W' := \mu_k(W)$  for the 
seed skeleton and Weinstein manifold associated to $\cC'$ as above, we show: 

\begin{theorem} \label{thm:symplectomorphismintro} 
There is a symplectomorphism $W \cong W'$  
such that the preimage of $\cL' \subset W'$ 
is related to $\cL$
by Lagrangian disk surgery along the disk $D_k$ attached to $C_k$. 
\end{theorem}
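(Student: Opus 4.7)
The plan is to construct the symplectomorphism by performing the Lagrangian disk surgery \emph{inside} the common ambient $W$, and then repackaging the result as an instance of the Weinstein handle construction applied to $\mu_k(\cC)$. Since Weinstein manifolds built by attaching handles along Legendrian-isotopic configurations are symplectomorphic, it suffices to show that after the disk surgery $\cL \rightsquigarrow \cL'$ performed inside $W$, the other handles of $W$ are attached along (Legendrian lifts of) the mutated curves $\mu_k(C_i)$ on $\cL'$, up to Legendrian isotopy.

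First I would reduce to a local model. By the Weinstein neighborhood theorem the pair $(W, \cL)$ is determined near $\cL \cup D_k$ by the Lagrangian sphere (resp. disk, if $C_k$ meets $\partial \cL$) $S_k := C_k \cup D_k$ together with the intersections of the other $C_i$ with $C_k$. In a tubular neighborhood $U$ of $S_k$, Lagrangian disk surgery along $D_k$ replaces $\cL \cap U$ by the other smooth resolution of the singular surface $\cL \cup_{C_k} D_k/D_k$, producing an exact Lagrangian $\cL' \subset W$ that agrees with $\cL$ outside $U$ and meets $D_k$ in a circle disjoint from $C_k$. Here we crucially use that $C_k$ is simple closed, so that $D_k$ is genuinely an attached Lagrangian disk in the sense of Yau's surgery.

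Next I would transport the other handles. Each core disk $D_i$ ($i \ne k$) is unchanged outside $U$, but inside $U$ its boundary arc crosses the surgery region along $C_i \cap U$ and must be displaced onto $\cL'$. I would construct a compactly supported Hamiltonian isotopy in $U$ carrying $\partial D_i$ from $\cL$ to $\cL'$; this produces a new Lagrangian handle core $D_i'$ attached along a curve $C_i' \subset \cL'$. The computation of $C_i'$ in the Darboux model of $U$ is the crux: because the two resolutions of a nodal Lagrangian surface differ by a half-Dehn twist on the transverse slice, pushing a co-oriented arc of $C_i$ through the surgery region produces precisely a half-twist around $C_k$ at each \emph{positively} co-oriented intersection point, and leaves the arc unchanged at \emph{negatively} co-oriented intersections. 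This is exactly the combinatorial recipe of Figure \ref{fig:mutateintro}, so $C_i'$ is isotopic as a co-oriented curve to $\mu_k(C_i)$.

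Assembling these pieces, a Weinstein neighborhood in $W$ of the new skeleton $\L_{\text{new}} := T^+_{\cL'} \cL' \cup \bigcup D_i'$ is a Weinstein manifold obtained from $T^*\cL'$ by attaching handles along the Legendrian lifts of the curves $\{C_i'\} \simeq \mu_k(\cC)$, hence is symplectomorphic to $W' = W_{\mu_k(\cC)}$. Composing the inclusion of this neighborhood with the retraction of $W$ onto it (coming from the Liouville flow) gives the required symplectomorphism $W \cong W'$, and by construction $\cL' \subset W$ is the image of the core surface of $W'$. The principal obstacle is the third step: although the computation is a two-dimensional local calculation, one must verify that the prescribed displacement of $\partial D_i$ extends to a genuine Hamiltonian isotopy of the Lagrangian disk $D_i$ compatible with the global Liouville structure, and that the direction of the half-twist produced by the surgery matches the co-orientation convention used to define mutation. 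The bookkeeping of signs and co-orientations — rather than any deep symplectic input — is where the real work of the proof lives.
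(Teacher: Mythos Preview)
Your approach is plausible but takes a genuinely different route from the paper's. You work from the perspective of the surface $\cL$: perform Yau's disk surgery to get $\cL'$, then push each remaining handle core $D_i$ onto $\cL'$ by a separate Hamiltonian isotopy, and finally identify the displaced boundary curves with $\mu_k(C_i)$. The paper instead works from the perspective of the disk $D_k$. It introduces a single conical model $\ghat \subset T^*\R^2$ for the entire local skeleton $\L_{(k)}$ --- the zero section together with the inward conormal to a unit circle, plus short arcs recording the intersections of the other $C_i$ with $C_k$ --- and realizes the mutation as the cone over an explicit Legendrian isotopy $\ghat^\infty \rightsquigarrow \mutghat^\infty$ given by cut-off geodesic flow on the cosphere bundle. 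The neighborhood $\widetilde{W}_{(k)}$ of $D_k$ is a standard symplectic ball; the symplectomorphism $W \cong W'$ is then defined to be the identity outside this ball and, inside, the movie of the contact isotopy near the boundary sphere and the identity near the center. The half-twist you are after falls out of the explicit flow (the paper's Figure of the flow $F_t$), and the disk surgery on $\cL$ is visible as the disk bounded by the image of $C_k$ shrinking to a point and re-expanding.

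What each approach buys: your method is conceptually economical --- one surgery, then bookkeeping --- but the step you correctly flag as the crux (that pushing $\partial D_i$ across the surgery region produces a half-twist exactly at positive intersections and nothing at negative ones) is precisely what the paper's conical model and explicit flow make computable rather than asserted. More importantly for the paper's purposes, realizing the mutation as a Legendrian isotopy at contact infinity is not incidental: it is exactly the input needed later to invoke the Guillermou--Kashiwara--Schapira sheaf quantization and construct the equivalence $\muloc(\L) \cong \muloc(\L')$. Your Hamiltonian isotopies of individual $D_i$ would not obviously feed into that machinery. A secondary point: your outline does not say what becomes of $D_k$ itself --- after surgery it should reattach to $\cL'$ along $C_k$ with reversed co-orientation, giving $C_k'$, and this needs to be checked alongside the other $C_i'$.
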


In Section \ref{sec:KSseed} we define a sheaf $\muloc$ of dg categories on $\L$.  It is glued together from 
sheaves of categories on conical models of local pieces of $\L$; these local 
sheaves of categories are themselves microlocalizations of constructible sheaf categories 
as in \cite[Chap. 6]{KS}, \cite{N2, N3, N4}, \cite{Gui}. 
Following the discussion above we expect that the global section category $\muloc(\L)$ 
captures some appropriate version of  the Fukaya category of $W$.  Given such a result, we could 
deduce from
Theorem \ref{thm:symplectomorphism} that $\muloc(\L) \cong Fuk(W) \cong Fuk(W') \cong \muloc(\L')$.

Instead, we construct such a composition directly using sheaf-theoretic results of \cite{GKS}.  We refer to the resulting equivalence as a mutation functor.

\begin{theorem}\label{thm:invariance}
If $C_k$ is a simple closed curve in the curve collection $\cC$, 
there is an equivalence
$\Mut_k: \muloc (\L) \cong \muloc (\L')$.  
\end{theorem}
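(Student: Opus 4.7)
The plan is to reduce to a local problem in a neighborhood of $D_k$ and then invoke the sheaf-theoretic results of \cite{GKS} in a standard local model. Since $\muloc$ is by construction a sheaf of dg categories on $\L$, it suffices to exhibit a compatible system of local equivalences: one equivalence on a sufficiently large open set around $D_k$, another on its complement, and a check that the two agree on the overlap.

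First I would observe that the mutation $\mu_k$ is local near $C_k$: outside a small tubular neighborhood $U$ of $D_k$ together with the arcs of the other curves that meet $C_k$, the configurations $\cC$ and $\cC'$ are identical. This gives a tautological identification of $\L \setminus U$ with $\L' \setminus U'$ inducing the identity equivalence $\muloc(\L)|_{\L \setminus U} \cong \muloc(\L')|_{\L' \setminus U'}$. So only the local model near $D_k$ requires genuine work.

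For that local model, I would fix a standard Weinstein ball whose core is a disk attached to a small patch of $\cL$ along $C_k$, together with cones over the Legendrian lifts of the arcs of the other curves that enter the patch. Restricting the symplectomorphism from Theorem~\ref{thm:symplectomorphismintro} to this neighborhood gives a symplectomorphism carrying the local model for $\L$ to the local model for $\L'$, up to Lagrangian disk surgery along $D_k$. After a cut-off, I would arrange this symplectomorphism to be the identity near the boundary of the local ball, so that it glues to the identity on the complement identified in the previous step.

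The main step is to convert the local symplectomorphism into an equivalence of microlocal sheaves. By construction, on the local model $\muloc$ is a microlocalization, in the sense of \cite[Chap.~6]{KS}, of a category of constructible sheaves on a contractible chart with microsupport prescribed by the skeleton. The result of \cite{GKS} associates, to a compactly supported Hamiltonian isotopy of $T^*M$, a functorial quasi-equivalence of categories of sheaves with correspondingly transported microsupport. Applying this to a Hamiltonian isotopy realizing the local symplectomorphism (and then passing to microlocalizations) yields the desired local equivalence on microlocal sheaves. Gluing with the tautological identification on the complement produces the global functor $\Mut_k$.

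The hard part will be the interface between Theorem~\ref{thm:symplectomorphismintro} and \cite{GKS}: the former is about an ambient symplectomorphism whose restriction to the skeleton is a disk surgery, while the latter takes genuine smooth Hamiltonian isotopies as input. One must realize the ambient symplectomorphism, after straightening and cut-off, as the time-one map of a compactly supported Hamiltonian flow on the local ball, and then verify that the GKS quasi-equivalence carries the microsupport conditions defining $\muloc(\L)$ locally to those defining $\muloc(\L')$. This amounts to a careful geometric check in the standard Weinstein 4-ball model around the collapsing disk, tracking how the twisting of transverse curves around $C_k$ corresponds to the action of the isotopy on microsupport.
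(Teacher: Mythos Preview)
Your overall architecture---split into a local piece near $D_k$ and a complementary piece, glue---matches the paper.  But the crucial local step is not what you describe, and as written it has a genuine gap.

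You propose to restrict the ambient symplectomorphism of Theorem~\ref{thm:symplectomorphismintro} to a ball, realize it as the time-one map of a compactly supported Hamiltonian flow, and feed that flow into \cite{GKS}.  Two problems.  First, the input to \cite{GKS} is not an arbitrary Hamiltonian isotopy of a symplectic ball; it is a \emph{homogeneous} symplectomorphism of $T^*M \setminus 0_M$, i.e.\ a contact isotopy of $T^\infty M$.  Second, and more fundamentally, no isotopy of any kind carries the local skeleton $\L_{(k)}$ to ${\L_{(k)}}'$: they are not even diffeomorphic (they have different singularities---see the remark after Proposition~\ref{prop:whatisghat}).  So there is no hope of tracking microsupport through an isotopy of skeleta, and the interface you flag as ``the hard part'' cannot be resolved in the way you sketch.

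The paper's solution is to introduce auxiliary conical models $\ghat,\mutghat \subset T^*\R^2$ (Definitions~\ref{def:ghat}, \ref{def:mutghat}) in which the disk $D_k$ sits in the zero section and the rest of the skeleton is the cone over a Legendrian $\ghat^\infty$ at contact infinity.  The point is that $\ghat^\infty$ and $\mutghat^\infty$ \emph{are} Legendrian isotopic, via an explicit damped geodesic flow $F_t$, and \cite{GKS} applied to this contact isotopy yields $\sh_{\ghat}(\R^2)\cong\sh_{\mutghat}(\R^2)$.  This is Corollary~\ref{cor:bytuene-mershe-ant-averil} and Proposition~\ref{prop:hatsheafmutation}.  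There remains a separate, nontrivial step: identifying $\muloc(\L_{(k)})$ with $\muloc(\ghat)$ (and $\muloc({\L_{(k)}}')$ with $\muloc(\mutghat)$), since $\L_{(k)}$ and $\ghat$ are different conical presentations of the same topological space.  The paper does this by hand via reflection functors on $A_2$ and $A_3$ quiver representations over radial sectors; see the proof of Theorem~\ref{thm:equiv}.  Your proposal does not account for either the passage to the $\ghat$ model or the Legendrian-isotopy-at-infinity mechanism, and these are exactly the ideas that make the argument go through.
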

\begin{remark}
The equivalence is induced by a local construction in a neighborhood of the surgery, which does not depend
on the remaining geometry of the skeleton.  Implicit in the above statement, and explicit in the proof of
the theorem, is the freedom to pass between different conical models used to locally describe $\muloc$. 
\end{remark}

Let $\loc(\cL)$ be the category of local systems on $\cL$.  We show: 

\begin{lemma}
There is a fully faithful
inclusion $\loc(\cL) \hookrightarrow \muloc(\L)$.  
\end{lemma}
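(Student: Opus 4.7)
The plan is to produce an explicit functor $\iota \colon \loc(\cL) \to \muloc(\L)$ by extending a local system in two stages across the stratification of $\L$, and then verify full faithfulness using the sheaf-of-categories structure of $\muloc$.

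\textbf{Constructing the functor.} Given $E \in \loc(\cL)$, view $E$ as a constructible sheaf on $\cL$ whose singular support is contained in the zero section of $T^*\cL$. Via the Kashiwara--Schapira microlocalization this specifies a section of $\muloc$ over the open stratum $\cL \setminus \bigcup_i C_i$ of the skeleton. Because $SS(E)$ avoids the Legendrian lifts $\widetilde{C}_i$, the microstalks of $E$ along each $\widetilde{C}_i$ vanish; consequently $E$ extends canonically across the codimension-one strata of $\L$ to a section over $\TC$. Over each attached disk $D_k$, the local conical model for $\muloc$ is built so that a section amounts to a sheaf near the attaching Legendrian $\widetilde{C}_k$ together with a compatible microstalk along the disk's core; the vanishing of the microstalk of $E$ along $\widetilde{C}_k$ makes the tautological extension by zero across $D_k$ well defined. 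These extensions glue to a section $\iota E \in \muloc(\L)$, and the construction is evidently functorial.

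\textbf{Full faithfulness.} Since $\muloc$ is a sheaf of dg categories on $\L$, the space $\Hom_{\muloc(\L)}(\iota E,\iota F)$ is computed as the global sections of the internal Hom sheaf $\underline{\Hom}(\iota E, \iota F)$. On the open stratum this Hom sheaf coincides with the local system $\underline{\Hom}_{\loc}(E,F)$. Across each curve $C_i$ and across each attached disk the vanishing microstalks of $\iota E$ and $\iota F$ kill the contributions coming from the singular locus, so the internal Hom sheaf is identified with the extension by zero (onto the disks) of $\underline{\Hom}_{\loc}(E,F)$. Taking global sections, and noting the disks contribute trivially by their contractibility together with the vanishing microstalk conditions, yields
\[
\Hom_{\muloc(\L)}(\iota E,\iota F) \;\cong\; R\Gamma\bigl(\cL,\, \underline{\Hom}_{\loc}(E,F)\bigr) \;=\; \Hom_{\loc(\cL)}(E,F).
\]

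\textbf{Main obstacle.} The essential content lies in the extension step across the attached disks. One must verify that, in the local conical model for $\muloc$ near a Legendrian handle, the full subcategory of objects with vanishing microstalk along the core of the handle is equivalent to the category of constructible sheaves on the surface half whose singular support avoids the attaching Legendrian. This is a controlled instance of the microlocal vanishing/propagation results of \cite{KS}, and is the one place where the detailed construction of $\muloc$ from Section \ref{sec:KSseed} must be unpacked; the rest of the argument is a formal consequence of the sheaf property of $\muloc$ and the fact that a local system has no microsupport off the zero section.
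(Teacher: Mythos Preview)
Your approach is essentially the same as the paper's: both use that local systems have vanishing microstalks off the zero section, hence can be extended by zero across the attached disks, and both deduce full faithfulness from this. The paper's execution is more direct, however: since $\muloc(\L)$ is \emph{defined} (Definition~\ref{def:KSsheaf}) as the homotopy fiber product $\muloc(\TC) \times_{\loc(\coprod \Lambda_i)} \loc(\coprod D_i)$, the functor is simply the inclusion $\loc(\cL) = \loc(\cL) \times_0 \{0\} \hookrightarrow \muloc(\TC) \times_{\loc(\coprod \Lambda_i)} \loc(\coprod D_i)$, and full faithfulness is immediate because a fiber product of full subcategories is a full subcategory. This bypasses both your internal-Hom computation and your ``main obstacle'': there is nothing to unpack in the local model, since the gluing is already packaged into the fiber product.
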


The relation between $\muloc(\L)$ and cluster algebra
comes from the comparison
$$\loc(\cL) \subset \muloc(\L) \cong \muloc (\L') \supset \loc(\cL')$$
Let $\Loc_1(\cL) \subset \loc(\cL)$ denote the full subcategory of local systems whose stalks are free of rank one and concentrated in cohomological degree zero, similarly for $Loc_1(\cL')$.  The objects of $\Loc_1(\cL)$ are determined by their holonomies, hence are parametrized by an algebraic torus.  

Since $\cL'$ comes with a homeomorphism to $\cL$, it makes sense to ask how the holonomies of a rank one local system transform under surgery.

\begin{theorem} \label{thm:cluster}
The image of an object of $\Loc_1(\cL)$ under $\Mut_k$ is an object of $\Loc_1(\cL')$ if and only if their holonomies differ by the (signed) cluster $\cX$-transformation at $[C_k]$.
\end{theorem}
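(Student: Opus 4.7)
The plan is to reduce everything to an explicit local computation in a neighborhood of the surgery. By the locality remark following Theorem \ref{thm:invariance}, the mutation functor $\Mut_k$ is induced by a construction in a tubular neighborhood $U$ of the attaching disk $D_k$. In $U$, the surface $\cL$ appears as an annulus $A$ with core $C_k$, intersected transversally by a finite collection of disjoint arcs cut out by the other curves $C_i$, each carrying a sign $\epsilon_{ik}^{(p)} = \pm 1$ recorded by the relative co-orientations at the intersection point $p$. Away from $U$, nothing changes under mutation, so it suffices to trace the effect of $\Mut_k$ on this annular model.

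First, I would make explicit the restriction of $\loc(\cL) \into \muloc(\L)$ to this local model. A rank one local system $\xi$ on $A$ is specified by its single holonomy $x_k = \xi([C_k])$ together with the canonical parallel transport across each transverse arc. Together with the microlocal data at $D_k$ and along each arc, this produces a distinguished microlocal sheaf on the local skeleton whose microlocal stalks are pinned down in terms of $x_k$ and the signs $\epsilon_{ik}^{(p)}$.

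Second, I would compute $\Mut_k$ directly on this model. The symplectomorphism of Theorem \ref{thm:symplectomorphismintro} is realized by a compactly supported Hamiltonian isotopy implementing Lagrangian disk surgery on $D_k$; its action on microlocal sheaves is computed via the sheaf-theoretic kernel of \cite{GKS}, which is the input to Theorem \ref{thm:invariance}. On the level of curves this isotopy performs a half Dehn twist of the arcs around $C_k$ at each positive intersection, producing the new arcs of $\cC' = \mu_k(\cC)$ in $A$. The transported microlocal data at each new arc can then be read off in terms of $x_k$ and the original stalks, and demanding that the result comes from a rank one local system $\xi'$ on $\cL'$ is a system of linear equations in the candidate holonomies $x_i' = \xi'([C_i'])$. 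Solving these yields formulas of the shape
\[
x_k' = x_k^{-1}, \qquad x_i' = x_i \prod_{p \in C_i \cap C_k} \bigl(1 \pm x_k^{\mathrm{sgn}(\epsilon_{ik}^{(p)})}\bigr)^{\epsilon_{ik}^{(p)}},
\]
which is precisely the signed cluster $\cX$-transformation at $[C_k]$. When the holonomies satisfy these equations, the image object genuinely lies in $\Loc_1(\cL')$; conversely, if the image lies in $\Loc_1(\cL')$, the full faithfulness of $\loc(\cL') \into \muloc(\L')$ forces the holonomies to satisfy exactly these equations.

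The main obstacle is pinning down the signs. The signed version of the cluster $\cX$-transformation differs from the unsigned one by explicit $\pm 1$ factors that arise from a combination of the co-orientation choices, the local Maslov/spin data built into the inclusion $\loc(\cL) \into \muloc(\L)$, and the orientation conventions used when writing down the GKS kernel of the disk-surgery isotopy. Tracking these signs consistently through the local model, and verifying that they assemble into precisely the signs appearing in the literature on cluster $\cX$-transformations, is the most delicate part of the argument and is what forces the word "signed" into the statement.
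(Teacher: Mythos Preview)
Your overall strategy --- localize near $D_k$ and compute $\Mut_k$ explicitly there --- is the right one, and is also what the paper does. But your proposal diverges from the paper's argument at the computational step, and this is where the real content lies.

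First, the paper makes a reduction you skip: rather than tracking all the transverse arcs through the annulus, it observes (via Proposition~\ref{prop:inclusion}) that the inclusion $\muloc(\widetilde{\L}) \hookrightarrow \muloc(\L)$ intertwines mutation, where $\widetilde{\L}$ is the skeleton built from $C_k$ \emph{alone}. So one may assume $\cC = \{C_k\}$ from the outset, and the local model becomes simply $\L_0$: a cylinder with one disk attached. This eliminates the bookkeeping of the arcs and their signs $\epsilon_{ik}^{(p)}$ entirely.

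Second, and more substantially, the paper does \emph{not} compute the GKS kernel directly. Instead it gives an explicit quiver-theoretic presentation of $\muLoc(\L_0)$ (Section~\ref{sec:L0}): objects are tuples $(A,B,x,y)$ with $m_A = 1-yx$, $m_B = 1-xy$ invertible. The simple objects are classified (Lemma~\ref{lem:simples}), and since any equivalence permutes simples and must respect boundary monodromies, the effect of $\Mut_k$ on simples is forced: $S_A^1 \leftrightarrow S_B^1$ and $P_A^m \mapsto P_B^{1/m}$ (Proposition~\ref{prop:Mutcharacterization}). To extract the transformation on holonomies, the paper then uses a functoriality trick with the ``universal'' infinite-rank object $\Pi_A$ (stalks $\coeffs\Z$): since $\Pi_A$ surjects onto every $P_A^m$, the unknown scalar relating $\cE'_\gamma$ to $\cE_\gamma$ must be a Laurent polynomial in the monodromy $t$, and comparing with $\Pi_B$ forces it to be $at^n(1-t)$. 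The residual ambiguity in $a$ and $n$ is then \emph{absorbed} (not computed) into the choice of natural transformation in the gluing and the Dehn-twist ambiguity in identifying $\cL \cong \cL'$.

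Your approach would instead require an explicit determination of the GKS kernel for the cutoff geodesic flow and its action on the relevant sheaves --- the paper explicitly notes this is possible but does not do it (see the remark after Theorem~\ref{prop:clustertrans}). Your ``can then be read off'' is precisely the hard step, and your displayed formula is not quite the cluster $\cX$-transformation: the exponent on $x_k$ inside the factor should not depend on the sign of the intersection. The paper's route avoids this computation entirely by leveraging the representation theory.
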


\begin{remark}
We reduce the general proof of Theorem \ref{thm:cluster} to a special case proved in \cite{BezKap}; see Section \ref{sec:BezKap} for a discussion. 
The signs indicated depend on a choice made in defining $\muloc$, which we 
have omitted from the notation.  The choice is 
classified by an element of $H^2(\L, \bZ/2\bZ)$ which vanishes upon restriction to $\cL$. 
Depending on the choice made one obtains a mix of the usual positive formulae or similar ones with minus signs.  
Typically in the cluster literature one assumes the lattice elements in a seed are linearly 
independent; in this case the sign choice is cosmetic.  
We expect that this choice can be identified with the corresponding one made in defining $\Fuk(W)$ (for
which see e.g. \cite[Sec. 12]{Sei-pic} or \cite{FOOO}). 
\end{remark}

We let $\cM_n(\L)$ denote the closure in the moduli space of $\muloc(\L)$ of the locus of 
objects whose stalks on $\cL$ have cohomology of rank $n$ concentrated in degree zero. 
The precise meaning of moduli space here is clarified in Section \ref{sec:moduli}.  
Translated to a statement about spaces, Theorem \ref{thm:cluster} becomes:

\begin{theorem}
The rank one moduli space $\cM_1(\L)$ has a partial cluster $\cX$-structure with initial seed $(H_1(\cL; \Z), \{[C_i]\})$.   
\end{theorem}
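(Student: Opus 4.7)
The plan is to assemble the inclusion $\loc(\cL) \hookrightarrow \muloc(\L)$, the mutation equivalence of Theorem~\ref{thm:invariance}, and the cluster transformation identification of Theorem~\ref{thm:cluster} into the structure of a partial cluster $\cX$-variety on $\cM_1(\L)$. First I would pin down the initial chart. The fully faithful inclusion $\loc(\cL) \hookrightarrow \muloc(\L)$ restricts to rank one objects, and on this locus the moduli is the holonomy torus $T_\cL := \Hom(H_1(\cL;\Z), \coeffs^\times)$. Composing with the moduli interpretation of Section~\ref{sec:moduli} produces an open embedding $T_\cL \hookrightarrow \cM_1(\L)$, and by definition it carries the seed $(H_1(\cL;\Z), \{[C_i]\})$.

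Next, for each simple closed curve $C_k \in \cC$ I would use the mutation functor to produce an adjacent chart. The equivalence $\Mut_k\colon \muloc(\L) \cong \muloc(\L')$ induces an isomorphism of moduli spaces $\cM_1(\L) \cong \cM_1(\L')$, and pulling back the initial chart $T_{\cL'} \hookrightarrow \cM_1(\L')$ yields a second open torus $T_{\cL'} \hookrightarrow \cM_1(\L)$. On the locus where a rank one local system on $\cL$ is sent by $\Mut_k$ to a rank one local system on $\cL'$, Theorem~\ref{thm:cluster} identifies the birational transition with the cluster $\cX$-transformation at the vertex $[C_k]$. The accompanying lattice mutation, which sends $[C_i]$ to $[C_i]$ plus a multiple of $[C_k]$ equal to the number of positive intersections of $C_i$ with $C_k$, matches the Fomin--Zelevinsky rule applied to the exchange matrix $b_{ij} = [C_i] \cdot [C_j]$ once the configuration is put in minimal position, which is exactly the geometric content of Definition~\ref{def:mutation}.

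I would then iterate. Whenever the mutated configuration $\mu_k(\cC)$ contains a simple closed curve one applies the same procedure, and composing mutation functors produces open tori and cluster $\cX$-transitions indexed by the combinatorial tree of \emph{admissible} mutation sequences, namely those along which every curve being mutated remains simple and closed. This subtree --- in general a proper subgraph of the full $n$-ary mutation tree because of the self-intersection obstruction highlighted in the introduction --- is the domain cut out by the word ``partial'' in the statement, and the resulting collection of charts glued by cluster $\cX$-transformations is the desired partial cluster $\cX$-structure.

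The main technical obstacle is coherence: one must show that two distinct admissible mutation sequences arriving at the same skeleton produce the same chart inside $\cM_1(\L)$, so that the collection of tori assembles into a scheme rather than a disjoint family of pairwise comparisons. Because the equivalence of Theorem~\ref{thm:invariance} is built locally, from a canonical model in a neighborhood of the surgery disk, this reduces to verifying compatibility for the finitely many commutation and pentagon-type relations among mutations supported in disjoint or adjacent local pieces of $\L$; the cluster identity of the transition maps on overlaps is then already supplied by Theorem~\ref{thm:cluster}. The remaining work is to make the moduli framework of Section~\ref{sec:moduli} precise enough that these local equivalences descend to an honest scheme structure on $\cM_1(\L)$ compatible with the cluster atlas.
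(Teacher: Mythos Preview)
Your outline is essentially the paper's own argument: assemble the open inclusion $\Loc_1(\cL)\hookrightarrow\cM_1(\L)$, the mutation equivalences, and Theorem~\ref{thm:cluster} into the definition of a partial cluster $\cX$-structure. The paper's proof (Theorem~\ref{thm:clustertheorem}) is a one-paragraph summary doing precisely this, citing Proposition~\ref{prop:globalcurvemutation} for the compatibility of seed and curve mutation, Proposition~\ref{def:classical} for the open map, and Proposition~\ref{prop:clustertrans} for the transition formula.

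However, your last two paragraphs manufacture an obstacle that is not there. You worry about ``coherence'' --- that two admissible mutation paths reaching the same skeleton must yield the same chart, requiring pentagon-type relations --- and about upgrading $\cM_1(\L)$ to an honest scheme. Neither is required. The paper's Definition~\ref{def:clusterstructure} indexes cluster charts by vertices of the infinite $|I|$-ary tree $T$, not by seeds or skeleta; a partial cluster $\cX$-structure is simply a collection of open maps $\cX_{s_t}\hookrightarrow Y$ for $t$ ranging over a subset of $T_0$, with adjacent charts related by the prescribed cluster transformation. Two tree vertices with isomorphic seeds are allowed to give different charts, so no coherence check is needed. Likewise, $\cM_1(\L)$ is already an Artin stack with the open maps $\Loc_1(\cL')\hookrightarrow\cM_1(\L)$ already supplied by Proposition~\ref{def:classical} (transported through the mutation equivalence), so there is no residual moduli-theoretic work. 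Once you drop these concerns your argument collapses to the paper's.

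One small correction: the match between curve-configuration mutation and seed mutation is the content of Proposition~\ref{prop:globalcurvemutation}, not a matter of putting curves in minimal position.
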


\begin{remark}
For $n > 1$, the moduli space $\cM_n(\L)$ carries likewise a ``nonabelian cluster structure'' whose charts are spaces of rank $n$ local systems on $\cL$.  Their transition functions are determined by the same computation as in Theorem \ref{thm:cluster}, see Section \ref{sec:ncclusters} and Section \ref{sec:cluster} for discussion. 
\end{remark}

\begin{figure}
\centering
%\begin{tikzpicture}
%\newcommand*{\rad}{2.7}; \newcommand*{\ang}{20}; \newcommand*{\imrad}{.4*2.7};
%\newcommand*{\omrad}{.8*\rad};
%\node (l) [matrix] at (0,0) {
%\draw[dashed] (0,0) circle (\rad);
%\draw[dashed] (0,0) circle (.2*\rad);
%\draw[lefthairs] (0,0) circle (.6*\rad);
%\draw[lefthairs,blue] (90:\imrad) arc[start angle=90,delta angle=-70,radius=\imrad] to[out=-70,in=180] (\imrad+.1*\rad,0) to (\rad,0);
%\draw[righthairs,blue] (90:\imrad) arc[start angle=90,delta angle=70,radius=\imrad] to[out=180+70,in=0] (-\imrad-.1*\rad,0) to (-\rad,0);
%\node (C1) at (0,-.6*\rad+.35) {$C_1$};
%\node (C2) at (-\imrad+.2,-.05) {$C_2$};
%\\
%};
%\node (r) [matrix] at (7,0) {
%\draw[dashed] (0,0) circle (\rad);
%\draw[dashed] (0,0) circle (.2*\rad);
%\draw[righthairs] (0,0) circle (.6*\rad);
%\draw[lefthairs,blue] (90:\imrad) arc[start angle=90,delta angle=-70,radius=\imrad] to[out=-70,in=180] (\imrad+.1*\rad,0) to (\omrad-.1*\rad,0) to[out=0,in=-70] (\ang:\omrad) arc[start angle=\ang,delta angle=360-2*\ang,radius=\omrad] to[out=70,in=180] (.9*\rad,0) to (\rad,0);
%\draw[righthairs,blue] (90:\imrad) arc[start angle=90,delta angle=70,radius=\imrad] to[out=180+70,in=0] (-\imrad-.1*\rad,0) to (-\rad,0);
%\node (C1) at (0,-.6*\rad+.35) {$C'_1$};
%\node (C2) at (-\imrad+.2,-.05) {$C'_2$};
%\\
%};
%\end{tikzpicture}
\includegraphics{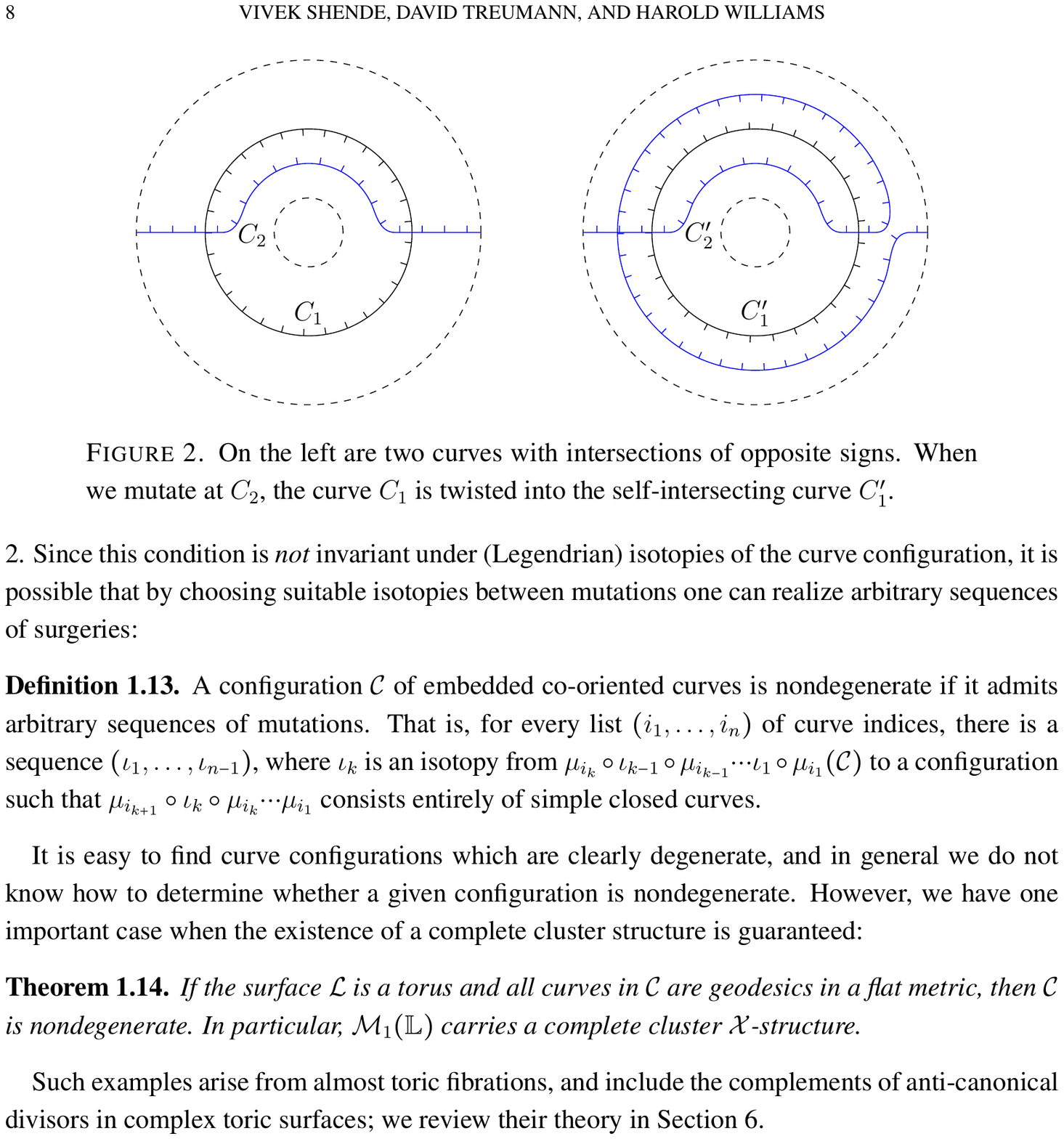}
\caption{On the left are two curves with intersections of opposite signs. When we mutate at $C_2$, 
the curve $C_1$ is twisted into the self-intersecting curve $C'_1$.}\label{fig:selfintintro} 
\end{figure}

The notion of cluster structure we must consider is more general than that usually encountered in the literature. We have seen that choices made in defining $\muloc$ can lead to the appearance of certain signs. But even making choices to avoid such signs, the precise notion of cluster $\cX$-structure introduced in \cite{FG2} applies only when the classes of the $C_i$ are a basis for $H_1(\cL,\Z)$. In general, each toric chart on $\cM_1(\L)$ has a map to a corresponding chart on the usual $\cX$-variety, dual to the natural homomorphism  of character lattices (see Section \ref{sec:cluster} for discussion).

A more fundamental issue is that while a cluster structure on a space consists of a system of toric charts related by all possible sequences of mutations, in the present context we are forced to also consider partial cluster structures; that is, structures involving only the tori obtained from a subset of possible mutation sequences. 
This is because we only have a sensible notion of mutation at a simple closed curve, while a mutation at one curve may create a self-intersection in another. 
This  occurs exactly when the two curves have intersections of opposite signs as in Figure \ref{fig:selfintintro}.  
Since this condition is \emph{not} invariant under (Legendrian) isotopies of the curve configuration, it is possible that by choosing suitable isotopies between mutations one can realize arbitrary sequences of surgeries:

\begin{definition}
A configuration $\cC$ of embedded co-oriented curves is nondegenerate if it admits arbitrary sequences of mutations. That is, for every list  $(i_1,\dotsc,i_n)$ of curve indices, there is a sequence $(\iota_1,\dotsc,\iota_{n-1})$, where $\iota_k$ is an isotopy from $\mu_{i_k} \circ \iota_{k-1} \circ \mu_{i_{k-1}} \cdots \iota_1 \circ \mu_{i_1}(\cC)$ to a configuration such that $\mu_{i_{k+1}} \circ \iota_k \circ \mu_{i_k} \cdots \mu_{i_1}$ consists entirely of simple closed curves.
\end{definition}

It is easy to find curve configurations which are clearly degenerate, and in general we do not know how to determine whether a given configuration is nondegenerate. However, we have one important case when the existence of a complete cluster structure is guaranteed:

\begin{theorem}\label{thm:nondegintro}
If the surface $\cL$ is a torus and all curves in $\cC$ are geodesics in a flat metric, then $\cC$ is nondegenerate. In particular, $\cM_1(\L)$ carries a complete cluster $\cX$-structure. 
\end{theorem}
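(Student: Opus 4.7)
My plan is to proceed by induction on the length of the mutation sequence, maintaining the invariant that after each mutation one may isotope the configuration back to a collection of closed geodesics; this ensures every curve is simple closed so that the next mutation is well-defined.

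Fix the flat metric on $\cL = \R^2/\Z^2$. The key geometric facts I will use are that every primitive class $\alpha \in H_1(\cL;\Z) \cong \Z^2$ is represented by a foliation of parallel closed geodesics (all mutually isotopic simple closed curves), and that two geodesics in non-proportional primitive classes $\alpha,\beta$ meet transversely in exactly $|\alpha \wedge \beta|$ points, all of the same sign. Hence any geodesic configuration is automatically taut --- no pair bounds a bigon --- and its pairwise intersections never exhibit the sign-mixing of Figure~\ref{fig:selfintintro}.

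For the inductive step, let $\cC$ be a geodesic configuration and $C_k \in \cC$ the curve to be mutated. By the definition of mutation (Figure~\ref{fig:mutateintro}), the curves $C_i$ meeting $C_k$ positively are replaced by their Dehn twists $T_k(C_i)$, which are simple closed in the new primitive class $[C_i] + ([C_i]\wedge[C_k])[C_k]$; all remaining curves are unchanged. I will then show that $\mu_k(\cC)$ is Legendrian isotopic to the geodesic configuration $\cC'$ whose curves represent these new homology classes. Each bigon in $\mu_k(\cC)$ has intersections of opposite sign at its two corners (a standard calculation for co-oriented curves on an oriented surface), so reducing an innermost bigon corresponds to a single tangency of opposite co-orientations, i.e.\ precisely a Legendrian move. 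Iteratively reducing innermost bigons yields a taut configuration, and standard surface topology then supplies an ambient Legendrian isotopy to $\cC'$, again producing only opposite-sign cancellation tangencies. This establishes nondegeneracy, and the complete cluster $\cX$-structure on $\cM_1(\L)$ is then assembled from the resulting family of geodesic configurations together with the transition maps of Theorem~\ref{thm:cluster}.

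The main obstacle is the Legendrian character of the straightening isotopy: one must verify that every tangency traversed while reducing bigons and finally straightening to geodesics carries opposite co-orientations, so that each intermediate move is of the type allowed by our definition of configuration isotopy. The underlying reason these tangencies are always of the correct type is the opposite-sign-at-bigon-corners calculation, combined with the explicit control provided by the flat metric over all intersection signs created by the Dehn twist $T_k$; sorting out this bookkeeping is the only real content of the argument.
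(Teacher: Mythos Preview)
Your inductive framework matches the paper's: show that after each mutation the configuration is Legendrian-isotopic to a geodesic one, then iterate. The gap is in how you justify that the straightening isotopy is Legendrian.

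You write that every bigon in $\mu_k(\cC)$ has corners of opposite sign, and conclude that reducing it passes through a tangency of opposite co-orientations. The premise is correct---indeed it holds for any pair of transverse co-oriented curves on an oriented surface---but it does not imply the conclusion. Take $\alpha$ to be a horizontal line co-oriented upward and $\beta$ a curve that bumps above it, also co-oriented upward: the two corners of the resulting bigon carry opposite signs in the sense of Convention~\ref{con:intersectionsigns}, yet collapsing the bigon produces a tangency at which both co-orientations point up, which is \emph{not} a Legendrian move. What governs the co-orientation at the tangency is whether the two curves are both co-oriented into the bigon, both out of it, or one of each; the corner signs simply do not distinguish these cases. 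Your ``standard surface topology'' step at the end inherits the same problem, and is in any case too vague to verify.

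The paper sidesteps bigon-by-bigon reasoning. After choosing coordinates so that $C_k$ lifts to a vertical, rightward co-oriented line, it straightens all twisted curves simultaneously via the isotopy from the Dehn twist to the linear shear $\left(\begin{smallmatrix}1&0\\1&1\end{smallmatrix}\right)$. The Legendrian check then reduces to a global up/down dichotomy: every stationary curve (those with $\langle C_j, C_k\rangle \le 0$) lifts to a line that is vertical or downward co-oriented, while every twisted curve---and its image throughout the isotopy---can be arranged to be upward co-oriented away from its vertical tangents. Hence any tangency created during the straightening is between an upward and a downward co-orientation, as required. If you want to rescue the local approach, this up/down dichotomy is exactly what you would need to invoke to show that every bigon in $\mu_k(\cC)$ has both curves co-oriented to the same side; the corner-sign observation alone does not do it.
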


\begin{figure}
\centering
\includegraphics{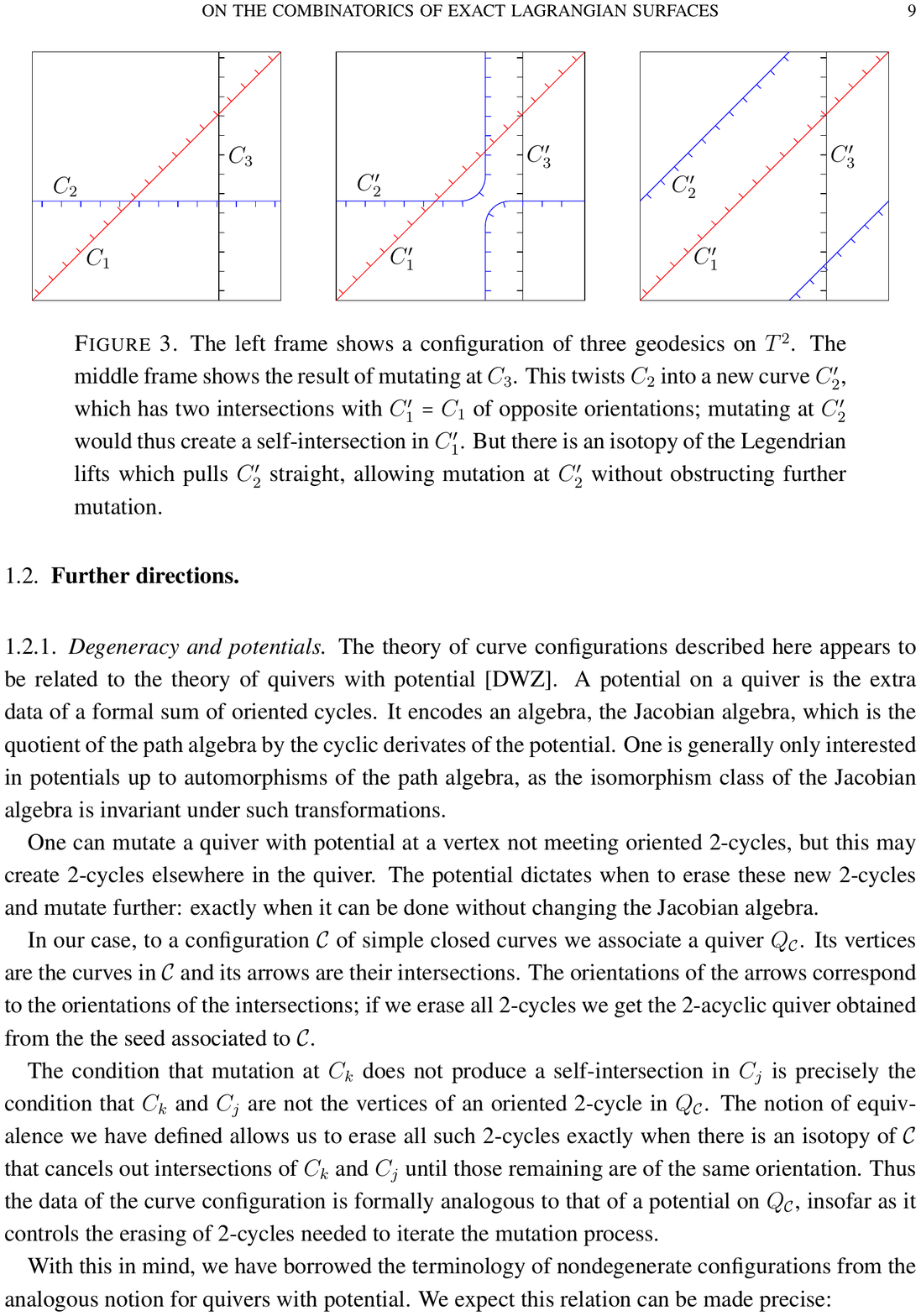}
\caption{The left frame shows a configuration of three geodesics on $T^2$. The middle frame shows the result of mutating at $C_3$. This twists $C_2$ into a new curve $C'_2$, which has two intersections with $C'_1 = C_1$ of opposite orientations; mutating at $C'_2$ would thus create a self-intersection in $C'_1$. But there is an isotopy of the Legendrian lifts which pulls $C'_2$ straight, allowing mutation at $C'_2$ without obstructing further mutation.}\label{fig:straightening}
\end{figure}

Such examples arise from almost toric fibrations, and include 
the complements of anti-canonical divisors in complex toric surfaces; 
we review their theory in Section \ref{sec:almosttoric}.

\newpage

\subsection{Further directions}

\subsubsection{Degeneracy and potentials} \label{sec:potential}
The theory of curve configurations described here appears to be related to the theory of quivers with potential \cite{DWZ}. A potential on a quiver is the extra data of a formal sum of oriented cycles. It encodes an algebra, the Jacobian algebra, which is the quotient of the path algebra by the cyclic derivates of the potential. One is generally only interested in potentials up to automorphisms of the path algebra, as the isomorphism class of the Jacobian algebra is invariant under such transformations.

One can mutate a quiver with potential at a vertex not meeting oriented 2-cycles, but this may create 2-cycles elsewhere in the quiver. The potential dictates when to erase these new 2-cycles and mutate further:  exactly when it can be done without changing the Jacobian algebra.

In our case, to a configuration $\cC$ of simple closed curves we associate a quiver $Q_\cC$. Its vertices are the curves in $\cC$ and its arrows are their intersections. The orientations of the arrows correspond to the orientations of the intersections; if we erase all 2-cycles we get the 2-acyclic quiver obtained from the the seed associated to $\cC$. 

The condition that mutation at $C_k$ does not produce a self-intersection in $C_j$ is precisely the condition that $C_k$ and $C_j$ are not the vertices of an oriented 2-cycle in $Q_\cC$. The notion of equivalence
we have defined allows us to erase all such 2-cycles exactly when there is an isotopy of $\cC$ that cancels out intersections of $C_k$ and $C_j$ until those remaining are of the same orientation. Thus the data of the curve configuration is formally analogous to that of a potential on $Q_\cC$, insofar as it controls the erasing of 2-cycles needed to iterate the mutation process.

With this in mind, we have borrowed the terminology of nondegenerate configurations from the analogous notion for quivers with potential. 
We expect this relation can be made precise: 

\begin{problem}
For any configuration $\cC$ construct a potential on $Q_\cC$ combinatorially --- e.g. by counting
some polygons in $\cL$ with edges on the curves and corners at their intersections.  
Show that Legendrian isotopies of curve configurations induce equivalences
of potentials, and that mutations of curve configurations induce mutations of potentials (up to equivalence). 
\end{problem}

Following \cite{Smi}, such a potential should have a geometric origin. To the 4-manifold $W$ one should associate a Calabi-Yau 6-manifold fibered over $W$, with the disks in $\L$ being the images of an associated a collection of Lagrangian 3-spheres. The quiver $Q_\cC$ then records the intersections of these 3-spheres, and the potential should record Floer-theoretic relations among them:

\begin{problem}
For any configuration $\cC$ construct a potential on $Q_\cC$ geometrically,
using the Fukaya category of a Calabi-Yau 6-manifold which fibers over $W$. 
\end{problem}

A solution to either of the above problems should help address the more general question of how curve configurations can fail to be nondegenerate:

\begin{problem}
Given a curve configuration $\cC$, compute which sequences of mutations can be performed on it without creating self-intersections. Generalize Theorem \ref{thm:nondegintro} by describing other explicit classes on nondegenerate configurations. For example, show that $\cC$ is nondegenerate when $Q_\cC$ has no directed cycles.
\end{problem}

\subsubsection{Cluster algebras from bicolored graphs}  
There is a large existing literature studying cluster structures coming from graphs on 
 surfaces, or equivalent geometric-combinatorial data. 
Examples of such cluster structures include those on the positroid strata of the Grassmannian \cite{Pos} and (tame and wild) character varieties of punctured surfaces \cite{GSV, FG, FST}. 
The general pattern is to begin with a bicolored graph $\Gamma$ embedded in a surface $\Sigma$ such that its complement has contractible connected components. 
The quiver is the dual graph to $\Gamma$; the associated 
cluster torus can be identified with the space of rank one local systems on $\Gamma$; 
its map to the relevant space  
is defined combinatorially as a sum over flows or perfect matchings on $\Gamma$. 

\begin{warning*}
The surface $\Sigma$ {\bf does not} play the same role as our surface $\cL$.
\end{warning*}

There is however a precise relation between the two.  It factors through the 
symplectic interpretation of the combinatorics of bicolored surface graphs, given in our previous work \cite{STWZ}.
In that account, the first step is to replace the bicolored graph $\Gamma$ 
by the equivalent data of an 
alternating strand diagram, which is then lifted to a 
Legendrian knot $\Lambda$ in the contact boundary of $T^*\Sigma$. 
We then studied the category 
$Sh_\Lambda(\Sigma)$ of constructible sheaves 
 with microsupport in this knot.  Of particular relevance are
the rank one objects, called {\em simple sheaves}
in \cite{KS}, studied
in \cite{STZ}, and identified with objects of the augmentation category of Legendrian contact homology
in \cite{NRSSZ}; we denote the moduli space of such sheaves by $\cM_1(\Lambda)$.

Following \cite{GK}, the bicolored graph defines (and is a deformation retract of) a so-called
{\em conjugate surface} $\cL$.   In \cite{STWZ}, we showed that $\cL$ can be embedded inside $T^* \Sigma$ as
 an exact Lagrangian filling of $\Lambda$.  We can choose this embedding so  its intersection with the zero section is  the graph $\Gamma$.  

The Nadler-Zaslow correspondence \cite{NZ, N1} then determines a map 
 $$Loc_1(\Gamma) = Loc_1(\cL) \hookrightarrow \cM_1(\Lambda),$$
which we showed is a chart in a cluster structure whose type is that of the dual quiver to $\Gamma$. Moreover, for certain choices of graph, $\Lambda$ can be isotoped so that $\cM_1(\Lambda)$ is manifestly isomorphic to a character variety or other space of interest.

The present paper tells the story from the perspective of $\cL$
rather than from the perspective of $\Sigma$.  The dictionary is as follows.  
We are interested in the Weinstein manifold $W = T^* \Sigma$, or more precisely in the Weinstein pair $(T^* \Sigma, \Lambda)$. 
The approach of \cite{STWZ} is essentially to study it in terms of the Lagrangian skeleton $\Sigma \cup \R_+ \Lambda$. 
In the present account, we are interested instead in the skeleton $\Sigma \cup \cL$,
or more precisely, in a slight perturbation of it in which the disks $\Sigma \setminus \Gamma$
are attached to $\cL$ such that their boundaries are pairwise transverse as in Definition \ref{def:W}.
The resulting skeleton is our $\L$.  To make  precise this relation, 
one should do the following: 

\begin{problem}
Beginning with a bicolored graph $\Gamma \subset \Sigma$ and applying the construction of
\cite{STWZ} to obtain a Legendrian $\Lambda$ with Lagrangian filling $\cL$ such that $\Gamma = \cL \cap \Sigma$, specify 
a perturbation of the face cycles of $\Gamma$ inside $\cL$, and show: 
\begin{itemize}
\item Applying Definition \ref{def:W} to the resulting curve
collection results in a pair $(W, \partial \cL)$ symplectomorphic to $(T^* \Sigma, \Lambda)$.
\item Rescaling the fibers of $T^*\Sigma$ gives a noncharacteristic
(in the sense of \cite{N4}) deformation $\L \rightsquigarrow \Sigma \cup \R_+ \Lambda$, hence inducing
$\muloc(\L) \cong Sh_\Lambda(\Sigma)$.  
\end{itemize}
\end{problem}

Since the vertices of the quiver correspond to the faces of $\Gamma$, we can perform a mutation at any face.  However, the new quiver again arises from a bicolored graph only when the face
is a square; 
the resulting transformation of bicolored graphs is called the ``square move''. 
In terms of the Legendrian-and-Lagrangian picture of \cite{STWZ}, this corresponds to a Legendrian isotopy
$\Lambda \rightsquigarrow \Lambda'$ together with a family of exact Lagrangian fillings 
$\cL \rightsquigarrow \cL'$ which passes through a singular Lagrangian.  In other words, $\cL$ and $\cL'$ are related
by a Lagrangian disk surgery at the square face. 

In the formalism of the present article, however, nothing is privileged about square faces. We can perform skeletal surgery on $\L$ at the disk coming from any face of $\Gamma$, yielding an exact Lagrangian filling giving rise to the relevant cluster chart. While this gives a geometric interpretation of mutation at an arbitrary face of $\Gamma$, we do not know to what extent further mutations are possible. 

\begin{problem}
Assume that the surface $\cL$ arises as the filling described by a bicolored graph,
and the curve configuration $\cC$ is determined by the face cycles, as described above.  
Determine whether $\cC$ is nondegenerate, or if not, characterize its mutation graph. 
\end{problem}

We note that in \cite{STWZ}, it is shown character varieties, positroid varieties, etc., are {\em biregularly}
isomorphic to $\cM_1(\Lambda)$, for appropriate $\Lambda$ --- this stands in contrast to most all accounts
of cluster algebra, in which one works only birationally.  In particular, we note that nondegeneracy of certain
explicit curve collections would imply that the above mentioned varieties carry complete, rather than partial, 
cluster atlases.  It is, however, also possible that the varieties carry complete cluster atlases, but not all charts
are realized by the geometric construction we have described.

\subsubsection{Infinitely many exact Lagrangians}
The ideas of this paper suggest a method to produce and distinguish infinitely many exact Lagrangians
in certain Weinstein 4-manifolds:
\begin{enumerate}
\item The invariant 
$\cL \mapsto \mathrm{Image}(Loc_1(\cL) \subset \muloc(\L) = Fuk(W))$ should give a Hamiltonian isotopy 
invariant of the Lagrangian $\cL$.   
\item Sequences of mutations of $\L$ induce transformations of this invariant which are given explicitly by cluster 
transformations on the torus $Loc_1(\cL)$, governed by the cluster structure coming from the quiver $Q_\L$.  
\item Because there are very few quivers which give rise to cluster structures with finitely many clusters ---
these have an ADE classification
\cite{FG2} ---
it should generally be the case that the above procedure gives infinitely many Hamiltonian non-isotopic exact
Lagrangians. 
\end{enumerate}

There are some steps which remain in carrying out the above programme.  

\vspace{2mm}Regarding the first point, 
either Kontsevich's conjecture that 
$\muloc(\L) = Fuk(W)$ has to be proven, or one has to show by another route that the Lagrangians which we construct
here are related in the Fukaya category by cluster transformations.  In fact, both can be done: the first
will appear in \cite{PS} and the second in \cite{ESS} (using and extending techniques from \cite{BEE}).

Regarding the second and third points, there are two subtleties.  The first is that 
we must understand when quiver mutations give rise to curve configuration mutations which preserve
the property that the curves are non-self-intersecting.  Ideally this will happen via a theory of 
potentials as suggested in Section \ref{sec:potential}; but other combinatorial approaches, especially in
examples, are possible as well.  For instance, we have shown directly that {\em all} mutations are allowed
for geodesic curve configurations on $\T^2$.  This setting includes that of recent works such as \cite{Kea, Via-one,
Via, Pas}. 

The second is that to completely understand which Lagrangians are distinguished this way, one would need a complete understanding of the exchange graph of the relevant cluster algebra. That is, though cluster tori are a priori labeled by the vertices of an infinite tree, the composition of cluster transformations associated to the path between two vertices can be biregular. The tori at these vertices thus yield the same open subset of the cluster variety; the exchange graph is the quotient of the infinite tree by this equivalence relation. 
For example, if two vertices in the initial quiver are connected by a single arrow, there is an associated 5-cycle in the exchange graph. To our knowledge it is a (well-known) open question of whether all cycles are generated by those arising from single arrows in this way

Moreover, we have seen that $\cM_1(\L)$ is in general not quite the same (even birationally) as the usual cluster $\cX$-variety. For example, when the classes of the curves in $\cC$ generate $H_1(\cL,\Z)$ but are not linearly independent, $\cM_1(\L)$ is instead birational to a positive-codimension subspace of the cluster $\cX$-variety. 
To distinguish Lagrangians, one must understand the classification of distinct cluster charts on such spaces, rather than charts on the usual cluster $\cA$- or $\cX$-varieties \cite{FG2}:

\begin{problem}
Let $s = (N,\{e_i\})$ be a seed as defined in Section \ref{sec:cluster}, where the $e_i$ are not necessarily a basis (for example, $(H_1(\cL,\Z),\{[C_i]\})$). Relate the classification of cluster tori in the associated $\cX$-variety (for example, $\cM_1(\L)$, if $\cC$ is nondegenerate), to that of clusters in the associated ordinary cluster algebra --- that is, the cluster algebra associated to the seed $(\Z^n = \Z\{e_i\},\{e_i\})$ with skew-symmetric form pulled back from $N$.
\end{problem}

It is also natural to conjecture that the symplectic 4-manifolds which correspond
to the finite-type cluster algebras have only finitely many Hamiltonian isotopy classes of 
Lagrangians which are topologically isotopic to $\Sigma$.  More generally, it would be interesting
to understand the symplectic meaning of the cluster algebras of finite mutation type \cite{FeST}, as well as general growth rates of cluster algebras. In the other direction, an invariant for distinguishing Lagrangians related by disk surgery was introduce in \cite{Y}; it might be interesting to understand this from the perspective of cluster algebras. 
A final natural question is:

\begin{problem}
Give a more intrinsic characterization of the Weinstein 4-manifolds which can be obtained from the construction of Definition \ref{def:W}.
\end{problem}

\subsubsection{Noncommutative Cluster Theory}\label{sec:ncclusters}
While the rank one moduli space $\cM_1(\L)$ carries a cluster $\cX$-structure in the usual sense (or part of one, if $\cC$ is degenerate), we have observed that for $n > 1$ the space $\cM_n(\L)$ carries a nonabelian generalization of one. Its charts are spaces of rank $n$ local systems on $\cL$. On the other hand, cluster-theoretic structures involving noncommutative variables have recently been studied by a number of authors \cite{K-nc,BRet,dFK}. We expect the spaces $\cM_n(\L)$ are geometric incarnations of this developing theory:

\begin{problem}
Make precise the relationship between the space $\cM_n(\L)$ and the noncommutative cluster theory developed in e.g. \cite{BRet}.
\end{problem}

\subsubsection{Relation to SYZ Mirror Symmetry}
Several works have explored the relationship between 
SYZ mirror symmetry --- i.e., the study of mirror symmetry via Lagrangian torus fibrations --- and cluster algebra, 
for example \cite{Aur, Sei2, AKO,GHK-mir,Pas,Via}, and most notably 
\cite{GHKK} which used ideas from mirror symmetry to resolve the longstanding positivity conjecture of \cite{FZ}.  

By comparison, in the present work we view a cluster variety $X$ as a space of Lagrangian surfaces, which may have positive genus, in a symplectic 4-manifold, which may be of lower dimension than $X$.  
We view $\cM_1(\L)$ as a type of mirror to $W$, albeit not an SYZ mirror.

Thus ideas from mirror symmetry can be applied in the present context.  
In particular, the zeroeth Hochschild 
homology of $\muloc(\L)$ --- expected to be equivalent to a certain piece of the symplectic homology of $W$ 
 \cite{K-homms, Sei-def, Sei-hoc} --- 
gives rise to functions on the moduli space of objects in $\muloc(\L)$, hence to functions on $\cM_1(\L)$.  

\begin{problem}
Give an explicit expression for the Hochschild homology of $\muloc(\L)$ in terms of the skeleton $\L$, or 
equivalently, the curve configuration $\cC$. 
\end{problem}
\begin{remark}
Some difficulties with carrying this out are described in \cite{Dyc}. 
\end{remark}

\begin{problem}
Give an explicit expression for the symplectic homology of $W$ in terms of the curve configuration $\cC$. 
\end{problem}
\begin{remark}
One approach would be to develop an analogue for cotangent bundles of Ng's resolution procedure \cite{Ng} 
so that the Legendrian DGA can
be computed by counting polygons on $\cL$ bounded by the curve collection $\cC$; recall that \cite{BEE} 
explains how the Legendrian DGA determines symplectic homology of $W$.  Note that we also expect
such counting to be implicated in the construction of the potential as discussed in Section \ref{sec:potential}. 
\end{remark}

\subsubsection{Symplectic and Poisson structures;  and quantization of cluster varieties} 
One appealing feature of the fact that we realize cluster varieties as moduli of objects in the Fukaya categories
of {\em 4-manifolds} is that this provides a modular explanation for the existence of 
symplectic or Poisson structures on cluster varieties, as in \cite{GSV,FG2}:  such structures are to be expected of moduli spaces of objects in 2-Calabi-Yau categories \cite{PTVV}.  

Moreover, since we are studying moduli of a sheaf of categories over $\L$, these interpretations 
should localize over $\L$. 
In fact we will construct in \cite{ST} such symplectic or Poisson structures, 
as a special case of a general construction of such 
structures on moduli spaces of microlocal sheaves.  Along the lines of \cite{CPTVV},
this may give rise to deformation quantizations.  It is natural to ask whether these can be integrated 
via the methods of  \cite{BBJ} and \cite{AFT} and related to the
quantizations of \cite{FG2, FG3, FG4}.

\subsubsection{Microlocal Sheaves on Nodal Curves}\label{sec:BezKap}
When $\cC$ consists of pairwise nonintersecting simple closed curves, 
the seed skeleton $\L$ is the arborealization of a nodal surface \cite{N3,N4}.  The homotopy category of $\muloc(\L)$ contains the category of ``microlocal perverse sheaves'' associated to the nodal surface by Bezrukavnikov and Kapranov \cite{BezKap}. 
They used the conical model of the nodal singularity given by the union of $\R^2$ and the cotangent fiber over the origin.  
Their moduli coincide with our $\cM_1(\L)$ and its higher-rank, framed versions, recovering the multiplicative quiver varieties of \cite{CBS,Yam}. 

The avatar of disk surgery from this perspective is the action of the Fourier-Sato transform on perverse sheaves. That its action on local systems is a cluster transformation is computed in \cite{BezKap}. 

\vspace{-2mm}
\subsection*{Acknowledgements}  
We thank Mohammad Abouzaid, Nate Bottman, Tom Bridgeland, 
Sean Keel, Maxim Kontsevich, Emmy Murphy, David Nadler, Theo Johnson-Freyd, Denis Auroux,
Nick Sheridan, Nicol\'o Sibilla, and Ivan Smith for helpful conversations. 
Section 6 owes its existence to discussions with Jonny Evans, Ailsa Keating, 
James Pascaleff, Laura Starkston, Dmitry Tonkonog, and Renato Vianna.  
We especially thank Paul Seidel and Eric Zaslow. 
V. S. is partially supported by the NSF grant DMS-1406871 and a Sloan fellowship, and 
has enjoyed the hospitality of the Mittag-Leffler institute during the preparation of this article. H.W. is supported by an NSF Postdoctoral Research Fellowship DMS-1502845.  

\vspace{-2mm}
\subsection*{Conventions and abuses}

We refer use dg categories throughout and refer to \cite{Kel,Toe3} for generalities. Used in the context of dg categories, equivalence always means quasi-equivalence, fully faithful means quasi-fully faithful, etc. We may say category rather than dg category when this is clear from the context. We will work with homotopy sheaves of dg categories, which should be understood in the model structure whose weak equivalences are the quasi-equivalences \cite{Tab2}; a sheaf of dg categories always means a homotopy sheaf in this sense. 

Throughout $\coeffs$ denotes a commutative coefficient ring, generally omitted from the notation.
Given a quiver $Q$, we write $\coeffs Q\dmod$ for the dg derived category of $Q$-representations. 
 Given a manifold $M$, we let $\sh(M)$ denote the dg derived category of constructible sheaves of $\coeffs_M$-modules on $M$. That is, it is the quotient \cite{Kel3, Dri} of the dg category of complexes of sheaves with constructible cohomology by the acyclic complexes; we simply refer to an object of $\sh(M)$ as a sheaf. We write $\loc(M) \subset \sh(M)$ for the full subcategory whose cohomology sheaves are local systems. We write $\Loc_n(M) \subset \loc(M)$ for the full subcategory whose cohomology is concentrated in degree zero and whose stalks are of rank $n$. Given a conical Lagrangian $L \subset T^*M$, $\sh_L(M) \subset \sh(M)$ is the full subcategory of sheaves whose singular support is contained in $L$, see \cite{KS}, our use
of this notion in contexts close to the present one in \cite{STZ, STWZ}, or the very brief summary in Appendix \ref{app:sheaves}. 

%!TEX root = COS2.tex
\section{Geometry of Mutation}\label{sec:geometry}

This section contains the detailed description of our skeletal surgery. 
We proceed on two levels: first in terms of a combinatorial operation on curve configurations, then in terms of a geometric operation on Lagrangian skeleta.  We then sketch an argument for why the ambient symplectic manifold $W$ should be preserved by skeletal surgery.

\subsection{Mutation of curve configurations} \label{sec:halfDehn}

Fix a surface $\cL$ and a collection $\cC$ of co-oriented, immersed curves, as in Definition \ref{def:W}. 
Choose a simple closed curve 
$C_k$ among them.  
We  give here a combinatorial description of a new collection $\cC'$ of curves on $\cL$.  

\begin{convention}\label{con:intersectionsigns}
If $\alpha$ and $\beta$ are co-oriented curves on a surface $\Sigma$ that meet transversely at $x \in \Sigma$, the ordered pair $(T_{\alpha}^+ \Sigma$, $T_{\beta}^+\Sigma)$ of rays in $T_x^* \Sigma$ determines an orientation of $T_x^* \Sigma$. 
If $\Sigma$ is oriented, we write $\langle \alpha,\beta\rangle_+$ and $\langle \alpha,\beta\rangle_-$  for the number of intersections that agree, resp. disagree with the orientation.  
\end{convention}
\begin{remark}
Note that $\langle \beta, \alpha \rangle_+ = \langle \alpha, \beta \rangle_-$, and  that the algebraic intersection number $\langle \alpha, \beta \rangle$ is equal to $\langle \alpha, \beta \rangle_+ - \langle \alpha, \beta \rangle_-$. 
\end{remark}

We write $\cL_{(k)}$ for a neighborhood of $C_k$ in $\cL$, and $\cC_{(k)}$ for the intersection of the curve configuration with this cylinder. 
More precisely, $\cC_{(k)}$ consists of one closed curve $C_k$ 
in the center of an annulus, and a number of pairwise noncrossing curves $B_i$ running from one end of the annulus to the other. 

We may assume each $B_i$ intersects $C_k$ only once.  

\begin{warning*} 
Multiple $B_i$ can come from the same curve $C_j$ in $\cC$: if $C_j$ intersects $C_k$ $n$ times, then $C_j \cap \cL_{(k)}$ will have $n$ components.
\end{warning*}

\begin{definition}\label{def:mutation} 
Let $tw: \cL_{(k)} \to \cL_{(k)}$ be a positive Dehn twist in a very small collar neighborhood of a translation of $C_k$ a short distance in the direction opposite its co-orientation. 
Let $\cC_+ := \{B_i \, |\, \langle B_i, C_k\rangle = 1\}$ be the subcollection of curves which intersect $C_k$ positively, and let $\cC_-  := \{B_i \, |\, \langle B_i, C_k\rangle = -1\}$ be the subcollection which intersect it negatively.  Let $C_k'$ be obtained from $C_k$ by reversing the co-orientation. 

We define the mutated curve collection $\cC_{(k)}'$ on $\cL_{(k)}$ by: 
$$\cC_{(k)}' := tw(\cC_+) \cup C_k' \cup \cC_-$$ 
\end{definition}

The definition is best understood by staring at Figure \ref{fig:mutate2},
which should be interpreted
according to the following convention.

\begin{convention} (Drawing hairs.) 
Let $X$ be a manifold.  Fix a submanifold $V \subset X$.  
Choosing a metric on $X$, we can identify the conormal bundle, normal
bundle, and a tubular neighborhood of $V$.  
Thus we can describe subvarieties of the conormal bundle of $V$ in terms of subvarieties
of the tubular neighborhood. 
In the present case, $X$ is always two dimensional.  Local pictures of $X$ are drawn on
the piece of paper, which gives a choice of local metric.  We indicate a conical
subvariety of the conormal bundle to a manifold 
by drawing ``hairs'' in the tubular neighborhood.  
Generally we will draw the hairs in the same
color as the submanifold. 
Note that when $V$ is codimension one, drawing the hairs on one side or the other
is the same as choosing a co-orientation of $V$. 
\end{convention}

\begin{remark}
Mutating at $C_k$, and then mutating at $C'_k$
results in a curve configuration whose Legendrian lifts are 
Legendrian isotopic (relative the boundary)
to the configuration which would result from applying a 
single Dehn twist.  
\end{remark}

\begin{figure}
\begin{center}
\includegraphics{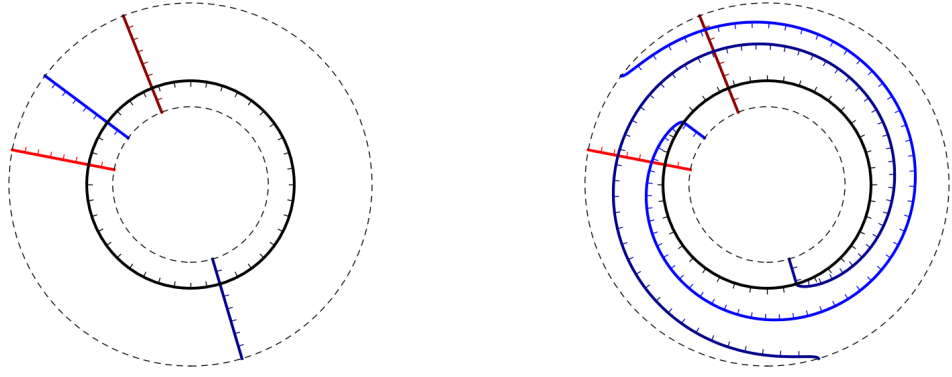}
\end{center}
\caption{\label{fig:mutate2} Before (left) and after (right) pictures of a mutation at a curve met positively by two lines
and negatively by two lines.  Here $\Sigma_{(k)}$, $\Sigma'_{(k)}$ are the left and right annuli whose boundaries are the dashed circles, and the curves $C_k$, $C_k'$ are the black circles. Convention \ref{con:intersectionsigns} says that in the left picture $\langle \text{black}, \text{\color{red} red} \rangle_+ = 1$.}
\end{figure}

\begin{proposition} \label{prop:localcurvemutation}
Let $B_1, B_2$ be segments, and $C_k$ the central curve in the collection
$\cC_{(k)}$; let 
$B_1', B_2'$ and $C_k'$ be their counterparts in 
$\cC'_{(k)}$.  Then 
\begin{eqnarray*}
\langle B'_i, C'_k\rangle_{\pm} & = & \langle B_i, C_k\rangle_{\mp} \\
\langle B_1', B_2' \rangle_+ & = & 
\langle B_1, C_k \rangle_+ \langle C_k, B_2 \rangle_+  
\end{eqnarray*}
\end{proposition}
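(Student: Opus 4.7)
My plan is to verify both identities by direct local computation. I set up coordinates $\cL_{(k)} \cong S^1_\theta \times \R_t$, oriented by $d\theta \wedge dt$, in which $C_k = S^1\times\{0\}$ carries co-orientation $+dt$ and $tw$ is supported in a collar strictly inside the half $t<0$. Each $B_i$ can be straightened to a vertical arc $\{\theta=\theta_i\}$ near $t=0$, with co-orientation $+d\theta$ if $B_i\in\cC_+$ and $-d\theta$ if $B_i\in\cC_-$.

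The first identity is essentially immediate from this setup. The crossing $B_i\cap C_k$ sits at $t=0$, outside the support of $tw$, so the co-orientation of $B_i'$ at this point equals that of $B_i$, regardless of which class $B_i$ belongs to. Since $C_k'$ is $C_k$ with its co-orientation reversed, the ordered pair of conormal rays appearing in Convention \ref{con:intersectionsigns} now defines the opposite orientation of $T_x^*\cL$, swapping the subscripts $+$ and $-$.

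For the second identity I would proceed by case analysis on which class each of $B_1, B_2$ belongs to. When both lie in the same class $\cC_\pm$, $tw$ acts identically on both (trivially on $\cC_-$, or by the same twist on $\cC_+$), so $B_1'\cap B_2' = tw(B_1\cap B_2) = \varnothing$. In these cases the right-hand side also vanishes, since $\langle B_j, C_k\rangle_+ = 0$ for $B_j\in\cC_-$ and $\langle C_k, B_j\rangle_+ = \langle B_j, C_k\rangle_- = 0$ for $B_j\in\cC_+$. In the two mixed cases only one of $B_1, B_2$ is twisted, and since $tw$ sends a vertical arc to one that sweeps once around $S^1$ inside the collar, the twisted and untwisted arcs meet transversely at exactly one point.

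The remaining task is to compute the sign of this single intersection. This is a short calculation: pushing a conormal $\pm d\theta$ forward by $tw$ produces a new conormal of the form $\pm d\theta + c\,dt$ in the twist region, where $c$ has a definite sign set by the direction of the Dehn twist. Substituting into Convention \ref{con:intersectionsigns} and comparing to the orientation $d\theta\wedge dt$ yields a definite sign in each mixed case, and the two mixed cases have opposite signs. One then checks that the positive sign occurs precisely when $B_1\in\cC_+$ and $B_2\in\cC_-$, matching the product $\langle B_1, C_k\rangle_+\langle C_k, B_2\rangle_+$. The only substantive difficulty is bookkeeping: the convention for ``positive'' Dehn twist must be aligned with Convention \ref{con:intersectionsigns} and with the hair-drawing convention; once this alignment is pinned down no real obstacle remains.
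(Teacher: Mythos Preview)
Your proof is correct and follows the same overall structure as the paper's: both argue the first identity directly from the reversal of $C_k$'s co-orientation, and both handle the second identity by the same four-case analysis on the classes $\cC_\pm$ of $B_1$ and $B_2$.

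The one noteworthy difference is in how the sign of the single intersection in the mixed case is determined. You compute it by pushing the co-orientation forward under $tw$ and comparing against $d\theta\wedge dt$, which is perfectly fine but, as you acknowledge, requires careful alignment of several conventions. The paper instead observes that since $B_1'$ and $B_2'$ meet in a single point, $\langle B_1',B_2'\rangle_+$ is determined by the \emph{algebraic} intersection number, which is a homology invariant: when $B_1\in\cC_+$ and $B_2\in\cC_-$ one has $[B_1']=[B_1]+[C_k]$ and $[B_2']=[B_2]$, so $\langle B_1',B_2'\rangle = \langle B_1,B_2\rangle + \langle C_k,B_2\rangle = 0+1$. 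This sidesteps the local co-orientation bookkeeping entirely and may be worth keeping in mind as a cross-check on your sign conventions.
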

\begin{proof}
By inspection of Figure \ref{fig:mutate2}.  

Or, in symbols: 
the statement regarding intersections with $C_k$ holds because we are reversing $C_k$
to get $C_k'$. 
For the second statement, there are different cases according as the values of
$\langle B_1, C_k \rangle_+$ and $\langle B_2, C_k \rangle_+$. 

When $B_1, B_2$ have the same orientation,
i.e. $ \langle B_1, C_k \rangle_+ = \langle B_2, C_k\rangle_+ $, we have
$\langle B_1', B_2' \rangle_+ = 0$ and also 
one of $\langle B_1, C_k \rangle_+$,  $\langle C_k, B_2 \rangle_+ $ must be zero,
giving the desired equality. 

In case $\langle B_1, C_k \rangle_+ = 1 = \langle C_k, B_2 \rangle_+ $, we have 
to show that $\langle B_1', B_2' \rangle_+ = 1$. 
In homology $B_1' = B + C_k$ and $B_2' = B_2$, so
$ \langle B_1', B_2' \rangle_+ = \langle B_1 + C_k, B_2 \rangle_+
= \langle B_1, B_2 \rangle_+ + \langle C_k, B_2\rangle_+ = 1$. 

Finally, in case $\langle B_1, C_k \rangle_+ = 0 = \langle C_k, B_2 \rangle_+ $
we have 
to show that $\langle B_1', B_2' \rangle_+ = 0$.  Here $B_1' = B_1$ and 
$B_2' = B_2 + C_k$, and so 
$ \langle B_1', B_2' \rangle_+ = \langle B_1 , B_2 + C_k\rangle_+
= \langle B_1, B_2 \rangle_+ + \langle B_1, C_k\rangle_+ = 0$.
\end{proof}

We define the mutation globally by gluing the mutation on $\cL_{(k)}$ to the identity
elsewhere.  That is, writing $\cL^{(k)}$ for an $\epsilon$-neighborhood of the closure of the complement of $\cL_{(k)}$.

\begin{definition}
Let $\cC$ be a curve collection on a surface $\cL$, and $C_k \in \cC$ a simple closed curve.  
We define the mutation of $\cC$ at $C_k$ to be the curve configuration which coincides with 
$\cC$ inside of $\cL^{(k)}$ and coincides with $\cC_{(k)}'$ inside $\cL_{(k)}$. 
We denote this mutation by $\cC'$. 
\end{definition}

\begin{remark}
To ensure that $\cC_{(k)}'$ and $\cC_{(k)}$ agree in a collar sufficiently close to the boundary of $\cL_{(k)}$, 
we  choose $\epsilon$ small enough that $\cL_{(k)} \cap \cL^{(k)}$ lies in this collar.
\end{remark}

The following globalizes Proposition \ref{prop:localcurvemutation}:

\begin{proposition}\label{prop:globalcurvemutation}
For any curve $C \ne C_k$, we have 
$\langle C', C'_k \rangle_+ =  \langle C_k, C \rangle_+ $
and $\langle C'_k, C' \rangle_+ =  \langle C, C_k \rangle_+ $.
For $C_i, C_j \ne C_k$, 
$$\langle C'_i, C'_j \rangle_+ = \langle C_i , C_j \rangle_+ + \left( \langle C_i,  C_k \rangle_+ 
\cdot \langle C_k , C_j \rangle_+ \right)$$ 
\end{proposition}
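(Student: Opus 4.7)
The plan is to reduce the global statement to the local computation of Proposition \ref{prop:localcurvemutation} by partitioning each intersection into those occurring inside the annular neighborhood $\cL_{(k)}$ and those occurring in its complement $\cL^{(k)}$. First I would perform a small isotopy of the curve configuration, as permitted by the notion of isotopy of curve configurations, to ensure that no intersection point of two curves in $\cC$ lies in the thin collar $\cL_{(k)} \cap \cL^{(k)}$. Since mutation is supported in $\cL_{(k)}$ and is the identity on $\cL^{(k)}$, the same will then be true of $\cC'$, so that every positive intersection is unambiguously assigned to one region or the other.

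For the first statement, every intersection of $C$ with $C_k$ already lies inside $\cL_{(k)}$, so I may decompose $C \cap \cL_{(k)}$ as a disjoint union $\bigsqcup_a B_a$ of segments, each meeting $C_k$ once. Applying the first identity of Proposition \ref{prop:localcurvemutation} segment by segment and summing gives
\[
\langle C', C'_k\rangle_+ \;=\; \sum_a \langle B'_a, C'_k\rangle_+ \;=\; \sum_a \langle B_a, C_k\rangle_- \;=\; \langle C, C_k\rangle_- \;=\; \langle C_k, C\rangle_+,
\]
where the final step is the identity $\langle \alpha,\beta\rangle_- = \langle \beta,\alpha\rangle_+$ noted after Convention \ref{con:intersectionsigns}. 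The companion identity $\langle C'_k, C'\rangle_+ = \langle C, C_k\rangle_+$ follows by the symmetric argument.

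For the second statement, I would partition positive intersections of $C'_i$ with $C'_j$ according to which region they lie in. The contribution from $\cL^{(k)}$ equals that of $\langle C_i, C_j\rangle_+$ there, since the mutation is trivial on $\cL^{(k)}$. Inside $\cL_{(k)}$, the original segments of $C_i$ and $C_j$ are pairwise disjoint by the description of $\cC_{(k)}$, and so contribute nothing to $\langle C_i, C_j\rangle_+$; writing $C_i \cap \cL_{(k)} = \bigsqcup_a B^i_a$ and $C_j \cap \cL_{(k)} = \bigsqcup_b B^j_b$, the second identity of Proposition \ref{prop:localcurvemutation} together with bilinearity $\sum_{a,b} x_a y_b = \bigl(\sum_a x_a\bigr)\bigl(\sum_b y_b\bigr)$ yields
\[
\sum_{a,b}\bigl\langle (B^i_a)', (B^j_b)'\bigr\rangle_+ \;=\; \sum_{a,b}\langle B^i_a, C_k\rangle_+\langle C_k, B^j_b\rangle_+ \;=\; \langle C_i, C_k\rangle_+ \langle C_k, C_j\rangle_+.
\]
Summing the two contributions gives the stated formula. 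The one delicate point I anticipate is the initial isotopy: I must check that a single perturbation of $\cC$ simultaneously pushes all intersections in both $\cC$ and $\cC'$ out of the interpolating collar, so that no positive intersection is created or destroyed in a region where neither local model applies. Once this is in place, the remainder reduces to the local proposition plus bookkeeping.
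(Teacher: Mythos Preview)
Your proposal is correct and follows the same approach as the paper: the paper's proof is the one-line observation that all changes to the intersection pattern occur inside $\cL_{(k)}$, so the result reduces to Proposition~\ref{prop:localcurvemutation}. Your version simply unpacks this reduction --- the decomposition of each $C_i \cap \cL_{(k)}$ into segments $B^i_a$, the bilinear summation, and the collar-disjointness caveat --- in more detail than the paper bothers to record, but the argument is the same.
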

\begin{proof}
The change of intersections happens in the collar $\cL_{(k)}$,
so we restrict attention here; the result now follows from Proposition \ref{prop:localcurvemutation}. 
\end{proof}

\subsection{Iteration of curve mutation}\label{sec:iteration}

Mutating a curve configuration as in Definition \ref{def:mutation} returns another curve configuration. 
However, even when the original curve configuration contain only embedded curves, this may no longer 
be the case in the mutated configuration; see Figure \ref{fig:selfintintro}.  Since we only mutate at embedded
curves, this constrains the possibility of iterating the mutation procedure.  
We introduce notation to name the difficulty: %and indicate at least one case when it does not appear. 

\begin{definition}
Let $\cC$ be a curve configuration.  Its  {\em intersection quiver} $Q_\cC$  has for vertices  the curves
the curves of $\cC$.  Arrows from $C_i$ to $C_j$ are geometric intersections contributing to  $\langle C_i, C_j\rangle_+$. 
The {\em algebraic intersection quiver} $Q_{[\cC]}$ is the quiver whose vertices are the curves, and which has $\mathrm{max}( \langle C_i, C_j \rangle, 0)$ arrows from $C_i$ to $C_j$.
\end{definition}

The quiver $Q_\cC$ can have loops, corresponding to self-intersections of the curves.  It can also
have oriented two cycles, when the absolute value of the algebraic intersection number of the curves is smaller than the number of geometric intersections. 
If we take $Q_\cC$ and erase all self-loops and cancel out 2-cycles until none remain, we obtain $Q_{[\cC]}$. 
While the notion of quiver mutation is usually formulated for quivers without self-loops or oriented 2-cycles \cite{FZ}, consideration of quivers such as $Q_\cC$ leads to the following generalization:

\begin{definition}
Let $Q$ be any quiver and $v_k$ a vertex with no self-loops. The mutation $\Mut_k(Q)$ of $Q$ at $v_k$ has the same vertices as $Q$ and
\begin{itemize}
\item an arrow $a: v_i \to v_j$ for each such arrow of $Q$ with $v_i, v_j \neq v_k$,
\item an arrow $a^{op}: v_j \to v_i$ for each arrow $a: v_i \to v_j$ of $Q$ with either $v_i = v_k$ or $v_j = v_k$,
\item an arrow $[ab]: v_i \to v_j$ for each pair of arrows $a: v_i \to v_k$, $b: v_k \to v_j$ in $Q$.
\end{itemize}
\end{definition}

\begin{warning*}
  Given a quiver without self-loops or oriented 2-cycles, the above notion of mutation does \emph{not} agree with the usual notion of quiver mutation, which is defined only for quivers of this kind. Rather, if we take such a quiver, perform the above mutation operation, then erase all 2-cycles created, we obtain the result of the standard notion of mutation of a 2-acyclic quiver. Note the general mutation is exactly what mutation of a quiver \emph{with potential} does to the underlying quiver \cite{DWZ}. In practice no ambiguity will result: we mean the above notion when we refer to mutation of a quiver such as $Q_\cC$ that may in principle have 2-cycles, we mean the standard notion when we refer to mutation of a quiver such as $Q_{[\cC]}$ that by definition cannot. 
\end{warning*}

\begin{proposition}
If $C_k$ is a simple closed curve in $\cC$, $Q_{\Mut_k(\cC)}$ (resp. $Q_{[\Mut_k(\cC)]}$) is the mutation of $Q_{\cC}$ (resp. $Q_{[\cC]}$), at $C_k$. 
\end{proposition}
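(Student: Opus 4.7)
The plan is to directly unpack the intersection number formulas of Proposition~\ref{prop:globalcurvemutation} against the combinatorial mutation rule for quivers recalled just above. No further symplectic or sheaf-theoretic input is needed.

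I would handle the geometric intersection quiver $Q_\cC$ first, by checking arrow counts pairwise. By definition, the number of arrows $C'_i \to C'_j$ in $Q_{\Mut_k(\cC)}$ equals $\langle C'_i, C'_j\rangle_+$. For $i,j\neq k$, Proposition~\ref{prop:globalcurvemutation} gives
\[
\langle C'_i, C'_j\rangle_+ = \langle C_i, C_j\rangle_+ + \langle C_i, C_k\rangle_+\langle C_k, C_j\rangle_+,
\]
which is exactly the sum of old arrows plus one new arrow per composable pair $C_i \to C_k \to C_j$, as prescribed by the generalized mutation rule. For arrows touching $C_k$, the identities $\langle C'_i, C'_k\rangle_+ = \langle C_k, C_i\rangle_+$ and $\langle C'_k, C'_j\rangle_+ = \langle C_j, C_k\rangle_+$ record the reversal of every arrow incident to $v_k$. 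Finally, mutation only reverses the co-orientation of $C_k$, so it remains simple closed and no self-loop is created at $v_k$. Together these observations identify $Q_{\Mut_k(\cC)}$ with $\Mut_k(Q_\cC)$ vertex by vertex.

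For the statement about $Q_{[\cdot]}$, I would invoke the Warning preceding the proposition: Fomin--Zelevinsky mutation of a $2$-acyclic quiver is the composite of the generalized mutation followed by cancellation of self-loops and $2$-cycles. Since passing from $Q_{(\cdot)}$ to $Q_{[\cdot]}$ is precisely this cleanup operation, the first part of the proof reduces the algebraic statement to the assertion that cleanup commutes with generalized mutation in the relevant sense. Concretely, antisymmetrizing the formula above expresses $\langle C'_i, C'_j\rangle$ as $\langle C_i, C_j\rangle$ plus a bilinear correction in the quantities $\langle C_i, C_k\rangle_{\pm}$ and $\langle C_k, C_j\rangle_{\pm}$; a short case analysis on the signs of $\langle C_i, C_k\rangle$ and $\langle C_k, C_j\rangle$ matches this correction term with the Fomin--Zelevinsky exchange rule.

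The step I expect to carry the real content is this last case analysis: cleanup and generalized mutation do not commute on an arbitrary quiver, so one must exploit that quivers of the form $Q_\cC$ carry the extra antisymmetric bookkeeping $\langle C_i,C_j\rangle_+ - \langle C_j,C_i\rangle_+ = \langle C_i,C_j\rangle$, which is preserved under mutation. With this observation in hand, the verification of the Fomin--Zelevinsky formula reduces to a routine inspection of signs.
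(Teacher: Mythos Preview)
Your treatment of $Q_\cC$ is correct and is exactly the paper's proof: the paper writes only ``This is a restatement of Proposition~\ref{prop:globalcurvemutation},'' and your argument is that restatement made explicit, matching arrow counts against the generalized mutation rule.

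For $Q_{[\cC]}$ you rightly sense that more is needed than a direct restatement --- the paper does not supply the extra step either --- but the ``short case analysis'' you describe cannot succeed in the stated generality. Set $a = \langle C_i, C_k\rangle_+$, $b = \langle C_k, C_i\rangle_+$, $c = \langle C_k, C_j\rangle_+$, $d = \langle C_j, C_k\rangle_+$. Antisymmetrizing the formula from Proposition~\ref{prop:globalcurvemutation} gives
\[
\langle C'_i, C'_j\rangle \;=\; \langle C_i, C_j\rangle + ac - bd,
\]
whereas the Fomin--Zelevinsky rule applied to $Q_{[\cC]}$ produces the correction $[a-b]_+[c-d]_+ - [b-a]_+[d-c]_+$. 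These coincide when $\min(a,b)=\min(c,d)=0$, i.e.\ when $Q_\cC$ has no oriented $2$-cycle through $v_k$, but not otherwise: with $(a,b,c,d)=(2,1,1,0)$ one gets $2$ versus $1$. The ``extra antisymmetric bookkeeping'' you invoke is only the identity $\langle\,,\,\rangle = \langle\,,\,\rangle_+ - \langle\,,\,\rangle_-$, which is already built into the computation and does not close the gap. So the $Q_{[\cC]}$ assertion holds only under the additional hypothesis that no $C_i$ meets $C_k$ with intersections of both signs --- implicit in the paper's downstream use (simple configurations), but not stated in the proposition.
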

\begin{proof}
This is a restatement of Proposition \ref{prop:globalcurvemutation}. 
\end{proof}

Note in particular that vertex $v_i$ which participates in a 2-cycle with $v_j$  creates a self-loop at $v_j$.  Mutating at the corresponding curve $C_i$ creates a self-intersection in the resulting $C_j'$.  While this is well defined, 
it is not desirable, since we do not then know how to mutate at $C_j'$.

\begin{definition}
A curve configuration  $\cC$ is {\em simple} if $Q_\cC$ has no loops or oriented two-cycles.  In other words, 
if all curves are embedded, and the algebraic and geometric intersection numbers agree up to sign. 
\end{definition}

However, in some cases we can make use of the freedom that, while for definiteness 
we have defined the curve configuration as co-oriented immersed curves in a surface, in fact we only care about the 
Legendrian lifts of these curves, up to Legendrian isotopy.  In terms of the curves in the surface, we may
isotope them past each other, as long as we do not in the process pass through a tangency of curves 
{\em with the same co-orientation}.  

In some cases, it is possible to Legendrian isotope the curve configuration to cancel a pair of oppositely 
oriented intersections between curves.   For example, a mutation at $C_k$ followed by 
a mutation
$C_k'$ generally creates pairs of intersections which can be cancelled by Legendrian isotopy. 

\begin{definition}
A simple configuration $\cC$ of embedded co-oriented curves is {\em nondegenerate} if it admits arbitrary sequences of mutations. That is, for every list  $(i_1,\dotsc,i_n)$ of indices of circles, there is a sequence of Legendrian
isotopies $(\iota_1,\dotsc,\iota_{n-1})$ such that 
$\iota_k \circ \mu_{i_k} \circ \iota_{k-1} \circ \mu_{i_{k-1}} \cdots \iota_1 \circ \mu_{i_1}(\cC)$ 
is a simple curve configuration, for all $k$. 
\end{definition}

We know one family of such examples. 

\begin{theorem}
A configuration $\cC = \{C_i\}$ of co-oriented geodesics on $T^2 = \bR^2/\bZ^2$, equipped with its standard Euclidean metric, is nondegenerate.
\end{theorem}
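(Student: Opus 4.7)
The plan is to show that any mutation of a geodesic configuration on $T^2$ admits a Legendrian isotopy returning it to a geodesic configuration; iterating then gives nondegeneracy, since every geodesic configuration is simple.

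First I will establish the basic property that any two primitive closed geodesics on $T^2 = \bR^2/\bZ^2$ with homology classes $v, w$ meet in $|v \wedge w|$ points all of the same sign. This immediately implies that geodesic configurations are simple. It also streamlines the mutation: since all intersections of $C_j$ with $C_k$ share a common sign, either every chord of $C_j$ in the annulus $\cL_{(k)}$ gets twisted, or none does. Thus $\mu_k(C_j)$ equals the globally-twisted $\tau_k(C_j)$ when $\langle C_j, C_k\rangle > 0$, equals $C_j$ when $\langle C_j, C_k\rangle \le 0$, and $\mu_k(C_k)$ is $C_k$ with reversed co-orientation. Each output is an embedded simple closed curve with homology class $[C_j] + \max(0,\langle C_j, C_k\rangle)[C_k]$, or $-[C_k]$.

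Let $\tilde{\cC}$ denote the configuration of co-oriented geodesics with the same oriented homology classes as $\mu_k(\cC)$. Comparing the intersection counts from Proposition~\ref{prop:globalcurvemutation} with the minimal counts $|v_i' \wedge v_j'|$ realized by $\tilde{\cC}$, one finds that each pair of curves in $\mu_k(\cC)$ differs from the corresponding pair in $\tilde{\cC}$ by some number of cancelling intersection pairs of opposite sign. For a pair with $\langle C_i, C_k\rangle > 0$ and $\langle C_j, C_k\rangle \le 0$, each such excess pair consists of an ``old'' intersection (a crossing of $C_i$ and $C_j$ outside the tube around $C_k$) and a ``new'' intersection (a crossing created by the twist arc of $\tau_k(C_i)$ meeting $C_j$ inside the tube), joined by an embedded bigon in $T^2$. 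The central claim is that the co-orientations of $\tau_k(C_i)$ and $C_j$ both point to the same side of the bigon interior. Granting this, each bigon can be cancelled by pushing one arc across the other through a single tangency, and the same-side property ensures the co-orientations at that tangency are opposite, making the move a Legendrian isotopy of the configuration.

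After all excess intersections have been cancelled, the resulting configuration has the same combinatorial data as $\tilde{\cC}$ (oriented classes, co-orientations, and signed pairwise intersection counts), and is therefore related to $\tilde{\cC}$ by an ambient diffeotopy of $T^2$ isotopic to the identity, which is automatically a Legendrian isotopy. The main technical step is verifying the same-side property for the bigons. This reduces to a direct local computation in the tube around $C_k$: realizing the positive Dehn twist as $(x, y) \mapsto (x + \phi(y), y)$ in coordinates where $C_k = \{y = 0\}$, with the sign of $\phi$ fixed by Definition~\ref{def:mutation} and Convention~\ref{con:intersectionsigns}, one computes $n_{\tau_k(C_i)}$ at a new intersection and compares with $n_{C_j}$, verifying that both co-vectors have positive pairing with a vector pointing into the bigon. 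Once this is in hand, nondegeneracy follows by induction: starting from a geodesic configuration, each mutation is followed by the straightening isotopy $\iota$, and the resulting geodesic configuration supports the next mutation in any prescribed sequence.
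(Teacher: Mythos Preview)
Your approach---cancel excess intersections bigon by bigon---is genuinely different from the paper's, but it has a real gap concerning tangencies with \emph{third} curves.

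The paper argues globally. After choosing coordinates so that $C_k$ is a vertical, rightward co-oriented line, the Dehn twist $\tau_k$ is isotopic through diffeomorphisms of $T^2$ to the linear map $\left(\begin{smallmatrix}1&0\\1&1\end{smallmatrix}\right)$. Restricting this single ambient isotopy to the twisted curves straightens them all simultaneously; the untwisted curves are held fixed. Because the twisted curves move together, no tangencies occur among them; tangencies arise only between a moving and a stationary curve. One then checks that throughout the isotopy every moving curve is upward co-oriented (away from its vertical tangents) and every stationary curve is downward co-oriented or vertical, so every tangency is Legendrian-allowed.

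Your bigon cancellation checks only the tangency between the two curves bounding the bigon. When you push an arc of $\tau_k(C_i)$ across the bigon interior to cancel a pair of intersections with $C_j$, that arc will in general sweep across other curves $C_l'$ in the configuration, and you have not verified that \emph{those} tangencies have opposite co-orientations. Bigons on the torus are global objects; their interiors typically contain arcs of other curves. Without controlling these third-curve tangencies, the move need not lift to a Legendrian isotopy of the full configuration. Patching this gap essentially requires the paper's observation that twisted curves are up-co-oriented and untwisted curves down-co-oriented, at which point the bigon-by-bigon analysis is no longer needed.

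A minor secondary point: your final step asserts that once the configuration is in minimal position with the correct homology classes and co-orientations, it is related to the geodesic configuration $\tilde{\cC}$ by an ambient diffeotopy isotopic to the identity. This is true on the torus, but it is not immediate from ``same combinatorics''; it deserves a citation or a short argument.
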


\begin{proof}
First note that for any $i$, $j$ all intersections between $C_i$ and $C_j$ are of the same sign; that is, the number of intersections between $C_i$ and $C_j$ is exactly the absolute value of $\langle C_i, C_j\rangle$. Indeed, the sign of all such intersections is determined by the slopes of any lifts of $C_i$ and $C_j$ to $\bR^2$. Thus the quiver $Q_\cC$ does not itself contain any 2-cycles.

We claim that the configuration obtained by mutating at some $C_k$ is again equivalent to one consist entirely of geodesics. Inductively applying the observation of the previous paragraph, it will then follow that $\cC$ is nondegenerate. 

Choosing coordinates appropriately on the universal cover $\R^2$, we may assume 
%Up to translations and the action of $SL_2\bZ$ we may assume that 
$C_k$ lifts to a rightwardly co-oriented vertical line. % in $\bR^2$. 
The Dehn twist around $C_k$ lifts to a homeomorphism from $\bR^2$ to itself which is isotopic to the linear homeomorphism given by the matrix $$\begin{pmatrix} 1 & 0\\ 1 & 1 \end{pmatrix}.$$ On $T^2$ this isotopy descends to one which simultaneously straightens the curves $C'_j$ which were twisted by mutation at $C_k$ (that is, the $C'_j$ for which $\langle C_j, C_k \rangle > 0$).

It remains to argue that this isotopy of the twisted curves lifts to an isotopy of the Legendrian lift of the entire configuration. This happens provided that whenever a curve being isotoped becomes tangent to a curve which is not moved, their co-orientations are opposite. The stationary $C'_j$  are those $C'_j$ for which $\langle C_j, C_k \rangle \leq 0$, in which case $C'_j = C_j$. These have the property that any lift to $\bR^2$ is a straight line which is either vertical or whose co-orientation points downward. On the other hand, for the moving $C'_j$, the geodesic at the end of the isotopy lifts to a straight line which is co-oriented upward. The twisted curve $C'_j$ does not lift to a straight line, but nonetheless can be chosen so that its lift is upwardly co-oriented away from its vertical tangents. The straightening isotopy can be chosen to preserve this property, thus only creating tangencies  between a downward co-oriented curve and an upward co-oriented curve (see Figure \ref{fig:straightening}). 
\end{proof}

\subsection{Mutation as seen by the disk}
\label{subsec:con-mod}

We have drawn the mutation from the point of view of curves on the surface $\cL$.  While our discussion made it
seem as if it was a discrete process, in fact there is a natural interpolation between the before and after 
configurations.  However, it cannot be seen from the point of view of the surface.  Instead, we describe it from
the point of view of the disk being attached to the curve $C_k$ at which we are mutating. 

First we describe the neighborhood of the disk $D_k$ inside the skeleton $\L$, supposing no other curves met
$C_k$.  In this case, the skeleton looks locally like the union of a cylinder --- $\cL_{(k)}$ from the previous
discussion --- with a disk $D_k$, glued in along $C_k$.  Observe that while this cannot be drawn conically
inside the $T^* \cL_{(k)}$, it can be drawn conically inside the cotangent bundle an $\R^2$ which
contains $D_k$ as the unit disk --- we take $\cL_{(k)}$ to be the conormal bundle of the boundary of 
the disk.  

Another picture we shall use, and denote by $\ghat$, is the union of the zero section of $T^*\R^2$ with the 
``inward'' conormals to the disk.  Topologically, this is again a disk glued to the cylinder, although it is not
diffeomorphic to the previous one.  (There is no reason it should be: a skeleton is the union of downward
flows of a Morse function, and the natural relation between them as the Morse function varies is not diffeomorphism.)  

Under the disk surgery \cite{Y}, we should see the disk shrink and then regrow in another way.  The key insight
in this section is that the movie of disk surgery can be seen as the cone over a certain Legendrian isotopy, at least
in the conical model $\ghat$.   This fact will ultimately allow us to define a mutation functor on sheaf categories,
using \cite{GKS}.  

\begin{figure}
\centering
%\begin{tikzpicture}
%\node[matrix] (b) at (0,0) {
%\draw[thick] (-.3,1) -- (-.3,0); 
%\draw[thick] (.3,0) -- (.3,-1);
%\draw[thick] (-1.5, 0) -- (1.5, 0);
%\draw[dashed, ->] (-.3, 1) -- (0.2,1);
%\draw[dashed, ->] (.3, -1) -- (-0.2,-1);
%\\};
%
%\node[matrix] (b) at (4,0) {
%\draw[thick] (0,-1) -- (0,1); 
%\draw[thick] (-1.5, 0) -- (1.5, 0);
%\draw[dashed, ->] (0, 1) -- (0.5,1);
%\draw[dashed, ->] (0, -1) -- (-0.5,-1);
%\\};
%
%\node[matrix] (c) at (8,0) {
%\draw[thick] (-.3,-1) -- (-.3,0); 
%\draw[thick] (.3,1) -- (.3,0);
%\draw[thick] (-1.5, 0) -- (1.5, 0);
%\draw[dashed, ->] (.3, 1) -- (.8,1);
%\draw[dashed, ->] (-.3, -1) -- (-0.8,-1);
%\\};
%\end{tikzpicture}
\includegraphics{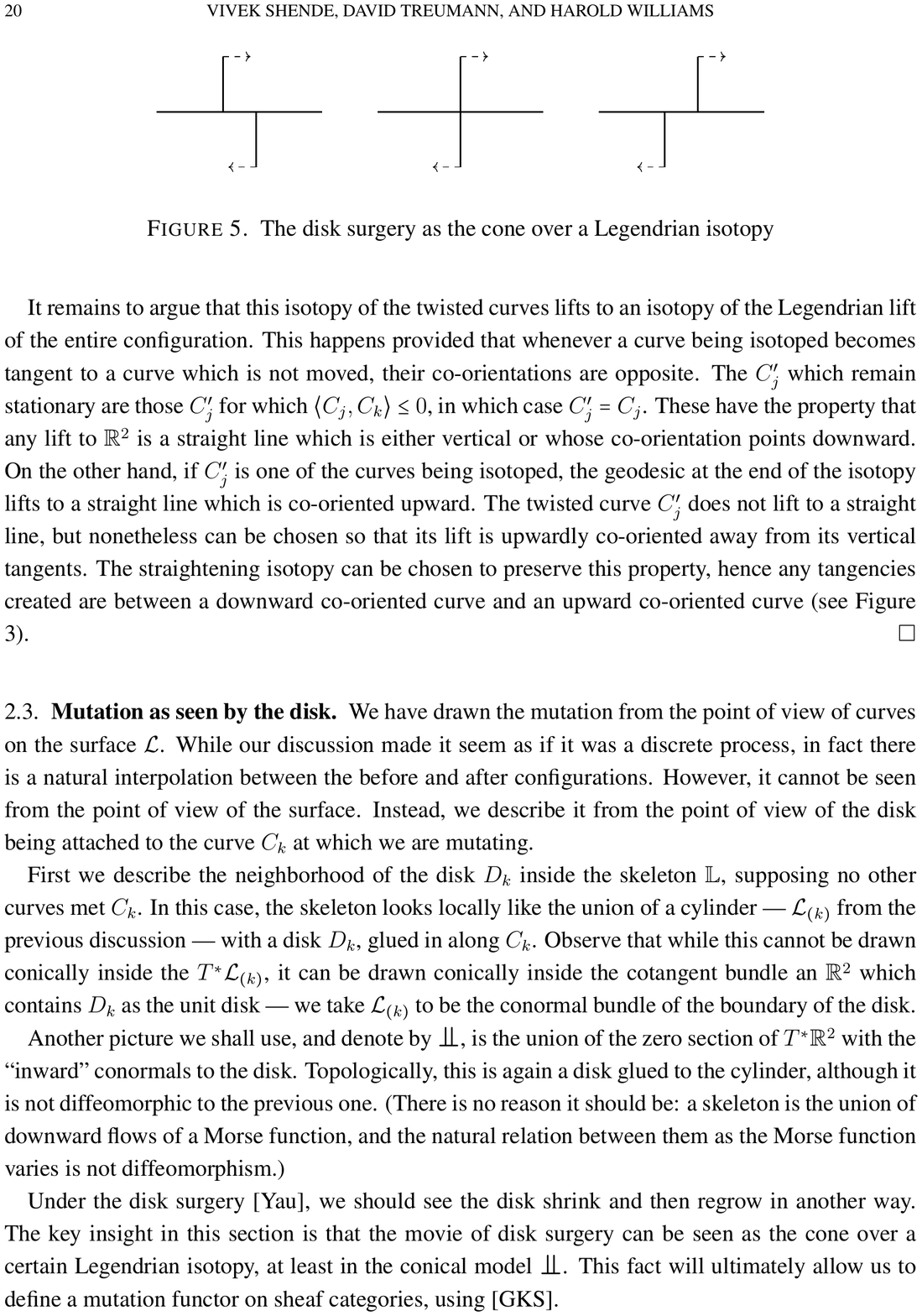}
\caption{The disk surgery as the cone over a Legendrian isotopy}
\label{fig:1ds}
\end{figure}

\begin{figure}[h]
\centering
\includegraphics{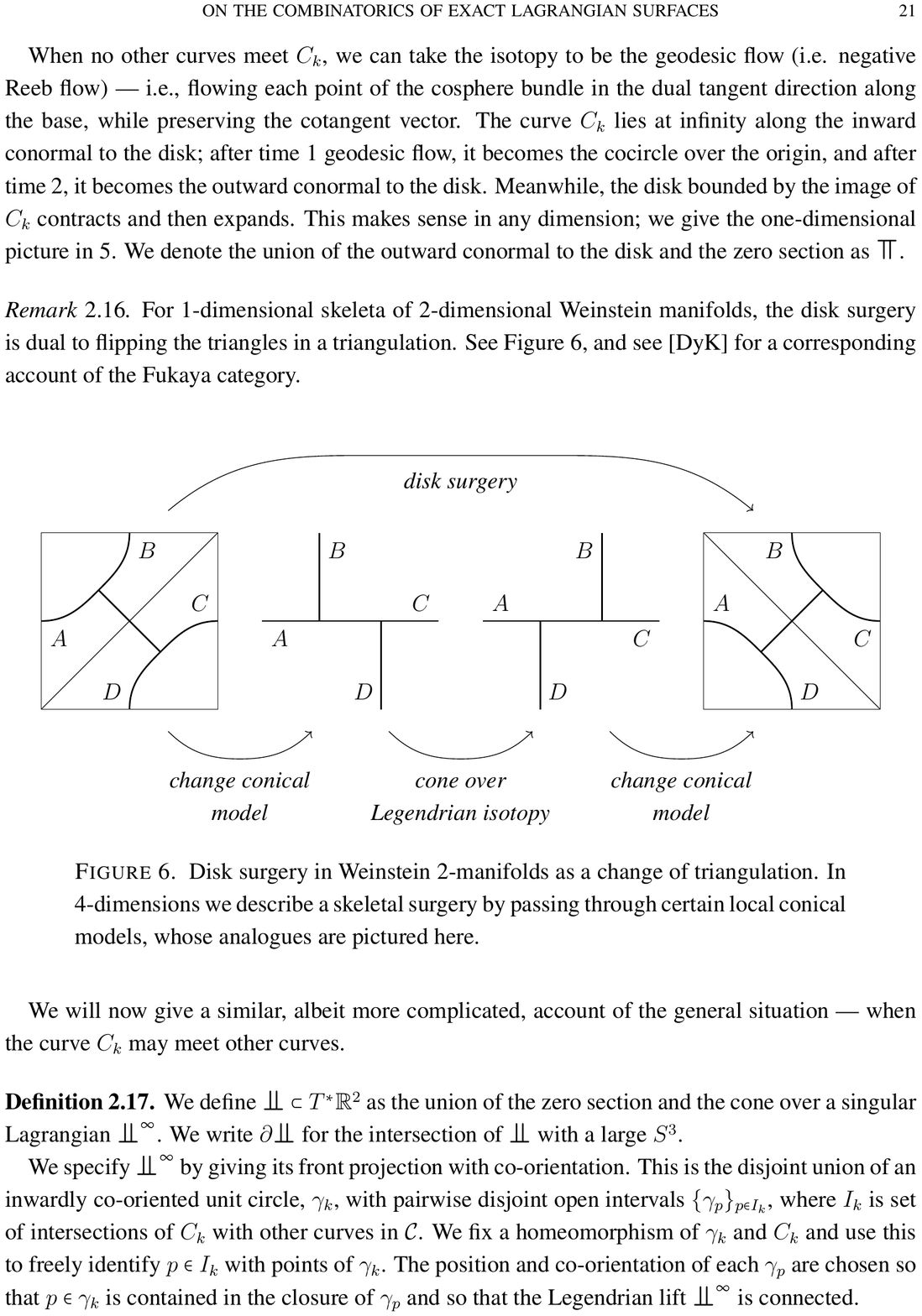}
\caption{Disk surgery in Weinstein 2-manifolds as a change of triangulation. In 4-dimensions we describe a skeletal surgery by passing through certain local conical models, whose analogues are pictured here.}
\label{fig:triangle}
\end{figure}

When no other curves meet $C_k$, we can take the isotopy to be the geodesic
flow (i.e. negative Reeb flow) --- i.e., flowing each point of the cosphere bundle
in the dual tangent direction along the base, while preserving the cotangent vector.  The curve $C_k$ lies 
at infinity along the inward conormal to the disk; after time 1 geodesic flow, 
it becomes the cocircle over the origin, and after time 2, it becomes the outward conormal to the disk.  Meanwhile, 
the disk bounded by the image of $C_k$ contracts and then expands. 
This makes sense in any dimension; we give the one-dimensional picture in \ref{fig:1ds}.  We denote the union
of the outward conormal to the disk and the zero section as $\mutghat$.  

\begin{remark}
For 1-dimensional skeleta of 2-dimensional Weinstein manifolds, the disk surgery is dual
to flipping the triangles in a triangulation. See Figure \ref{fig:triangle}, and see
\cite{DyK} for a corresponding account of the Fukaya category.
\end{remark}

We will now give a similar, albeit more complicated, account of the general situation
--- when the curve $C_k$ may meet other curves. 

\begin{definition} \label{def:ghat}
We define $\ghat \subset T^* \R^2$ as the union of the zero section
and the cone over a singular Lagrangian $\ghat^\infty$.  
We write $\partial \ghat$ for the intersection of $\ghat$ with a large $S^3$. 

We specify $\ghat^\infty$ by giving its front projection with co-orientation.   
This is the disjoint union of an inwardly co-oriented unit circle, $\gamma_k$,  with pairwise disjoint open intervals $\{\gamma_p\}_{p \in I_k}$, where $I_k$ is set of intersections of $C_k$ with other curves in $\cC$.  We fix a homeomorphism of $\gamma_k$ and $C_k$ and use this to freely identify $p \in I_k$ with points of $\gamma_k$.  The position and co-orientation of each $\gamma_p$ are chosen so that $p \in \gamma_k$ is contained in the closure of $\gamma_p$ and so that the Legendrian lift $\ghat^\infty$ is connected.  
\end{definition}

The result of mutation also has a conical model.   
Again we define this space as the union of the zero section in $T^* \R^2$ with the
conormal to a certain Legendrian knot $\mutghat^{\infty}$, which, in turn, we define
as the result of applying a cut-off geodesic flow.

Explicitly, let $(x, y)$ be the usual coordinates on $\R^2$,
and $\theta$ an additional angular coordinate for the co-circle bundle. 
Let $f:\R^2 \to [0,1]$ be a smooth function that is vanishes 
outside of a disk of large radius and is identically $1$ on a slightly 
smaller disk.  We define 
\begin{equation}
\label{eq:damp_flow}
F_t(x,y,\theta) = (x+f(x,y) t\cos(\theta),y+f(x,y)t\sin(\theta),\theta)
\end{equation}
The flow $F_t(\ghat^\infty)$ is illustrated in Figure \ref{fig:flow}.

\begin{definition} \label{def:mutghat}
We define $\mutghat^\infty := F_{2}(\ghat^\infty)$, and $\mutghat$ as the union of the cone 
over $\mutghat^\infty$ with the zero section. 
\end{definition} 

Recall that we write ${\cL_{(k)}}'$  and $\cL^{(k)} {}'$
 {\em not} for a neighborhood of $C'_k$ and its complement, but instead for the parts of 
 $\cL'$ which are the images of $\cL_{(k)}$ and $\cL^{(k)}$ under the fixed identification 
 $\cL \cong \cL'$ used in defining the mutation of curve configurations.   In particular, recall that 
 the restricted curve configuration ${\cC_{(k)}}'$ will generally contain intersections amongst the curves
 ending on the boundary of ${\cL_{(k)}}'$, whereas a neighborhood of $C'_k$ would not.  

\begin{remark}
The ``neighborhood of $C'_k$'' would be naturally denoted by  
${\cL'}_{(k)}$, similarly for ${\cL'}^{(k)}, {\cC'}_{(k)}$.  Compare the ordering of the prime and the $k$ with the above.
However, we never use these subsets in this paper: any occurences below of the these symbols
are misprints for the other ordering. 
\end{remark}

\begin{definition}
Let $\L_{(k)}$, resp. ${\L_{(k)}}'$,
be the skeleton resulting from applying Definition \ref{def:L} to $(\cL_{(k)}, \cC_{(k)})$, resp. $({\cL_{(k)}}',  {\cC_{(k)}}')$. 
\end{definition}

\begin{proposition}

There are homeomorphisms, respecting the obvious identifications at the boundary, $\L_{(k)} \cong \ghat$
and $\L_{(k)} \cong \mutghat$.
\end{proposition}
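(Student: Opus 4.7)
The plan is to construct each homeomorphism by matching natural stratifications of the two sides piece-by-piece. On the skeleton side, $C_k$ cuts $\cL_{(k)}$ into an inner sub-annulus (on the side indicated by the co-orientation) and an outer sub-annulus, and the disk $D_k$ is attached along $C_k$: three two-dimensional strata meet along the circle $C_k$, and a half-disk $D_i^{1/2}$ is glued along each transverse arc $B_i$, each meeting $C_k$ once at a point $p$. On the conical side, the zero section $\R^2$ cut off by a large ball in $T^*\R^2$ is split by the unit circle (the front projection of $\gamma_k$) into a closed unit disk and an outer annulus, while $\mathrm{Cone}(\gamma_k)$ is a finite cylinder glued along this circle; three strata again meet along it. Each $\mathrm{Cone}(\gamma_p)$ is a half-disk glued to $\ghat$ along an L-shaped arc consisting of the ray over $p$ inside $\mathrm{Cone}(\gamma_k)$ (coming from the endpoint of $\gamma_p$ on $\gamma_k$) together with the base projection of $\gamma_p$ in $\R^2$.

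The first homeomorphism is then assembled by sending $D_k$ to the closed unit disk, the outer half of $\cL_{(k)}$ to the outer annulus in $\R^2$, and the inner half of $\cL_{(k)}$ to $\mathrm{Cone}(\gamma_k)$, arranging that the three maps agree on $C_k$ (via the fixed identification $C_k \cong \gamma_k$) and take the boundary circles of $\cL_{(k)}$ to the boundary of $\ghat$ in the cutoff sphere. Under this identification, each $B_i$ maps to an arc in $\ghat$ passing through the point corresponding to $p$ with the same L-shape as the attaching arc of $\mathrm{Cone}(\gamma_p)$; after adjusting $\gamma_p$ within the isotopy class allowed by Definition \ref{def:ghat} so that its base projection matches the outer half of the image of $B_i$, the half-disk $D_i^{1/2}$ extends to a homeomorphism onto $\mathrm{Cone}(\gamma_p)$.

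The main technical point is local: near each intersection $p$ the attaching arc of $\mathrm{Cone}(\gamma_p)$ must correspond to $B_i$ under the assignments above. This reduces to combinatorics of local incidences: the inward co-orientation of $\gamma_k$ forces the $r$-direction of the cone to play the role of the inner side of $C_k$, so that the ray portion of the L-shape sits on the opposite side of $C_k$ from the base-arc portion; the freedom in positioning $\gamma_p$ provided by Definition \ref{def:ghat} is exactly what is needed for the two halves to glue at $p$ into a transverse arc realizing $B_i$, rather than, say, an arc that doubles back through the cylinder. Disjointness of the $\gamma_p$ as $p$ varies matches disjointness of the $B_i$ away from $C_k$.

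For the second homeomorphism, the damped geodesic flow $F_t$ of (\ref{eq:damp_flow}) is an ambient isotopy of $T^*\R^2$ supported in a compact region, so $F_2$ restricts to a homeomorphism $\ghat \cong \mutghat$ fixed outside this region. Composing with the first homeomorphism yields $\L_{(k)} \cong \mutghat$ while respecting boundary identifications, and exhibits the skeletal surgery locally as the cone over the Legendrian isotopy $t \mapsto F_t(\ghat^\infty)$.
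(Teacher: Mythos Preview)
Your argument for the first homeomorphism $\L_{(k)}\cong\ghat$ is correct and considerably more explicit than the paper's, which simply takes this identification as evident from the definitions.

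The second part, however, contains a genuine gap. The flow $F_t$ of \eqref{eq:damp_flow} is defined on the cosphere bundle $S^*\R^2$ (coordinates $(x,y,\theta)$), not on $T^*\R^2$. Its homogeneous lift to $T^*\R^2\setminus 0$ carries $\mathrm{Cone}(\ghat^\infty)$ to $\mathrm{Cone}(\mutghat^\infty)$, but it does \emph{not} extend continuously across the zero section: the base displacement $(f(x,y)t\cos\theta,\,f(x,y)t\sin\theta)$ is independent of the fibre radius, so the limit as one approaches the zero section is not the identity. Hence $F_2$ does not restrict to a homeomorphism $\ghat\cong\mutghat$, and your composition argument does not go through.

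There is a related issue of what the second homeomorphism is meant to assert. The statement as printed has a typo; the intended claim (clear from the proof and from how the proposition is used later, e.g.\ in Proposition~\ref{prop:whatisghat} and Theorem~\ref{thm:equiv}) is ${\L_{(k)}}'\cong\mutghat$, with boundary identifications matching the \emph{mutated} configuration ${\cC_{(k)}}'$. Even if one had an abstract homeomorphism $\ghat\cong\mutghat$, precomposing with $\L_{(k)}\cong\ghat$ would not yield a map respecting these boundary identifications, because the transverse arcs in ${\cC_{(k)}}'$ are twisted relative to those in $\cC_{(k)}$.

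The paper's approach is instead to repeat the piece-by-piece construction you gave for $\ghat$, now with $\mutghat$ and ${\L_{(k)}}'$. The only nontrivial step --- what the paper calls ``the real content'' --- is to check that the front projection of $\mutghat^\infty=F_2(\ghat^\infty)$ acquires exactly the crossings prescribed by the half-Dehn-twist of Definition~\ref{def:mutation}: the arcs $\gamma_p$ with positive intersection sign get dragged once around the collapsed circle and cross the others, while those with negative sign do not. This is verified by following the flow (Figure~\ref{fig:flow}). Once that combinatorial match is established, your stratification-matching argument applies verbatim to give ${\L_{(k)}}'\cong\mutghat$.
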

\begin{proof}
The real content here is the assertion that the flow $F_t$ creates the correct intersections in the projections 
of $\mutghat^\infty$.  This can be seen by inspection of Figure \ref{fig:flow}. 
\end{proof}

\begin{figure}
\begin{center}
\includegraphics[scale = .35]{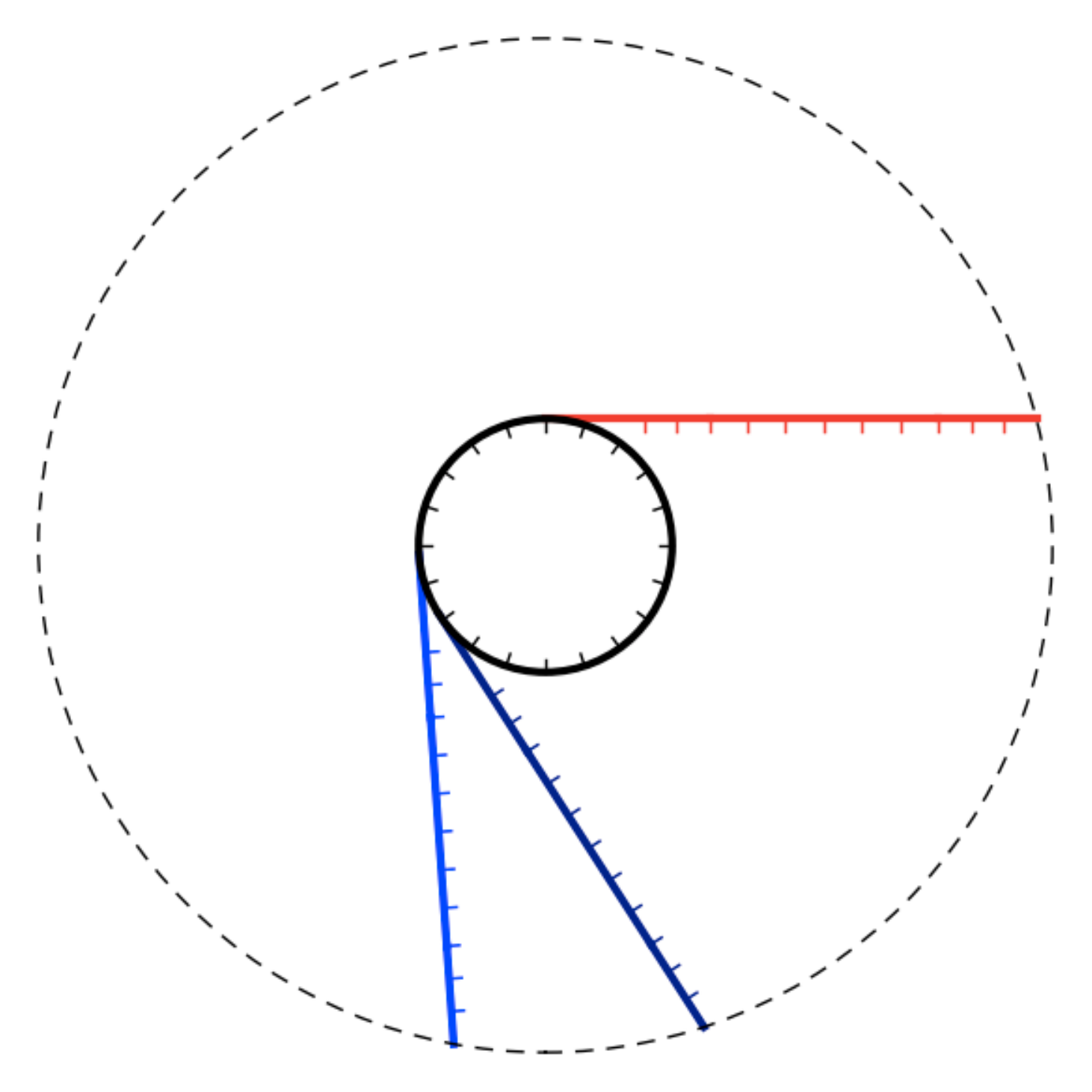} \includegraphics[scale = .35]{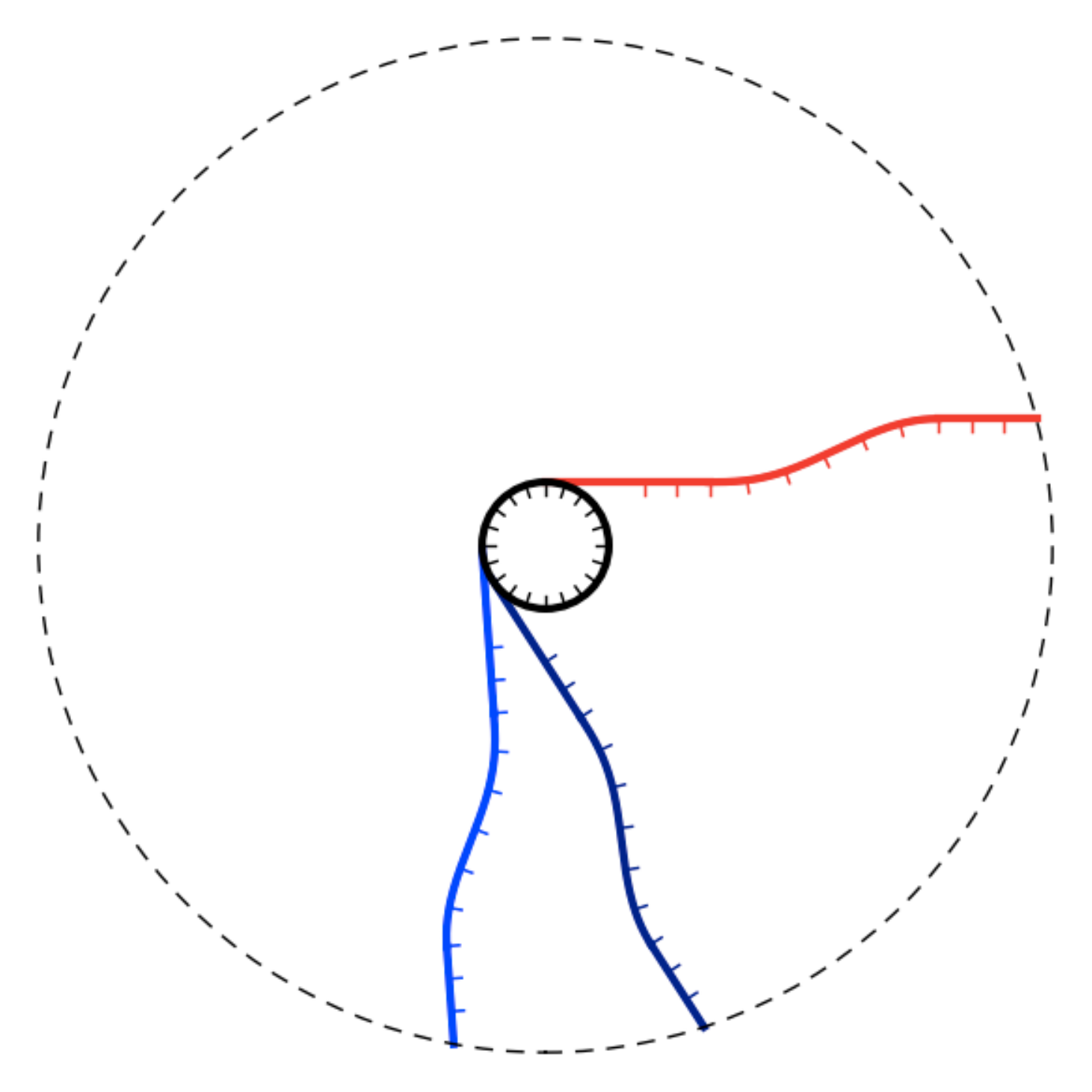} 
\includegraphics[scale = .35]{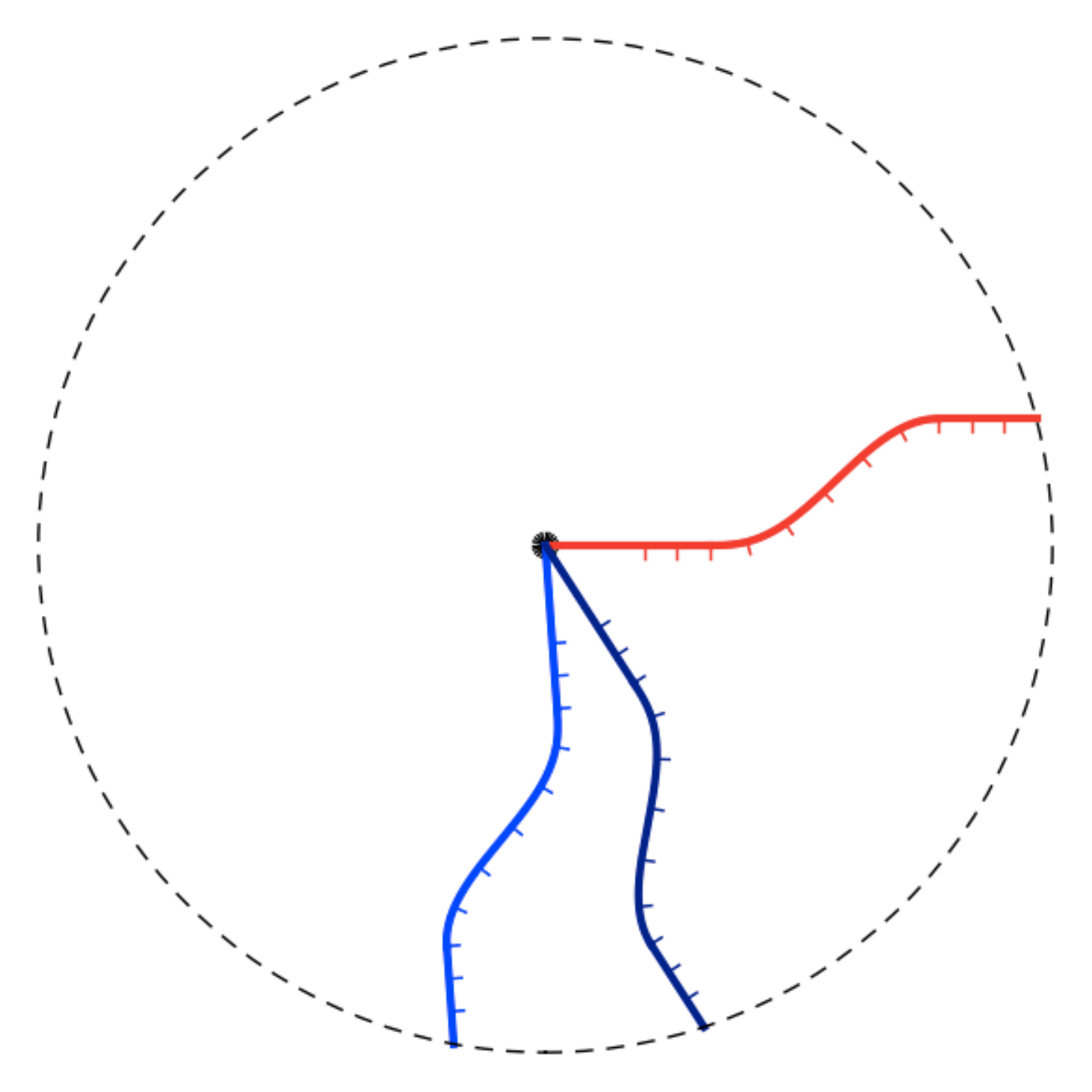} 
\end{center}
\begin{center}
\includegraphics[scale = .35]{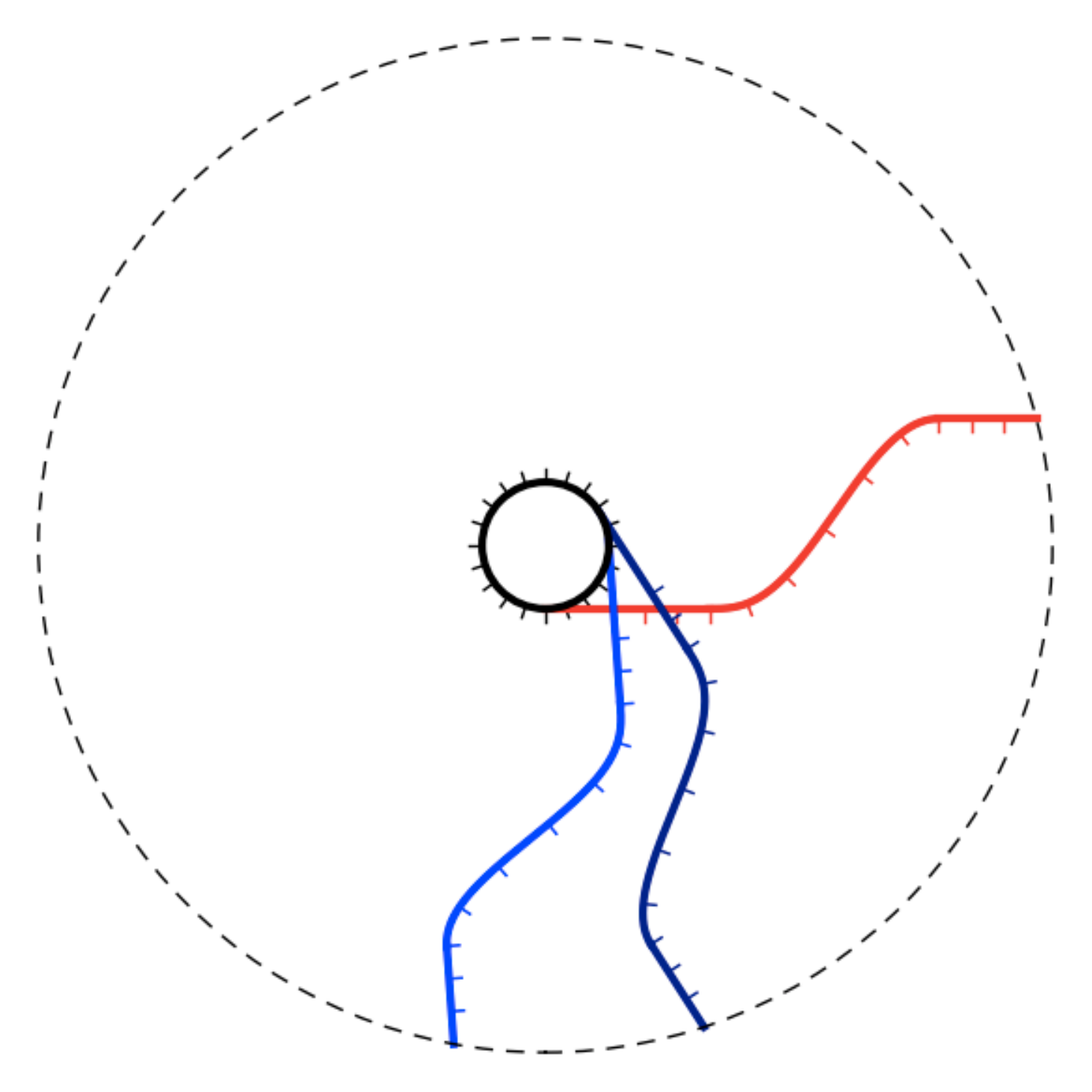} \includegraphics[scale = .35]{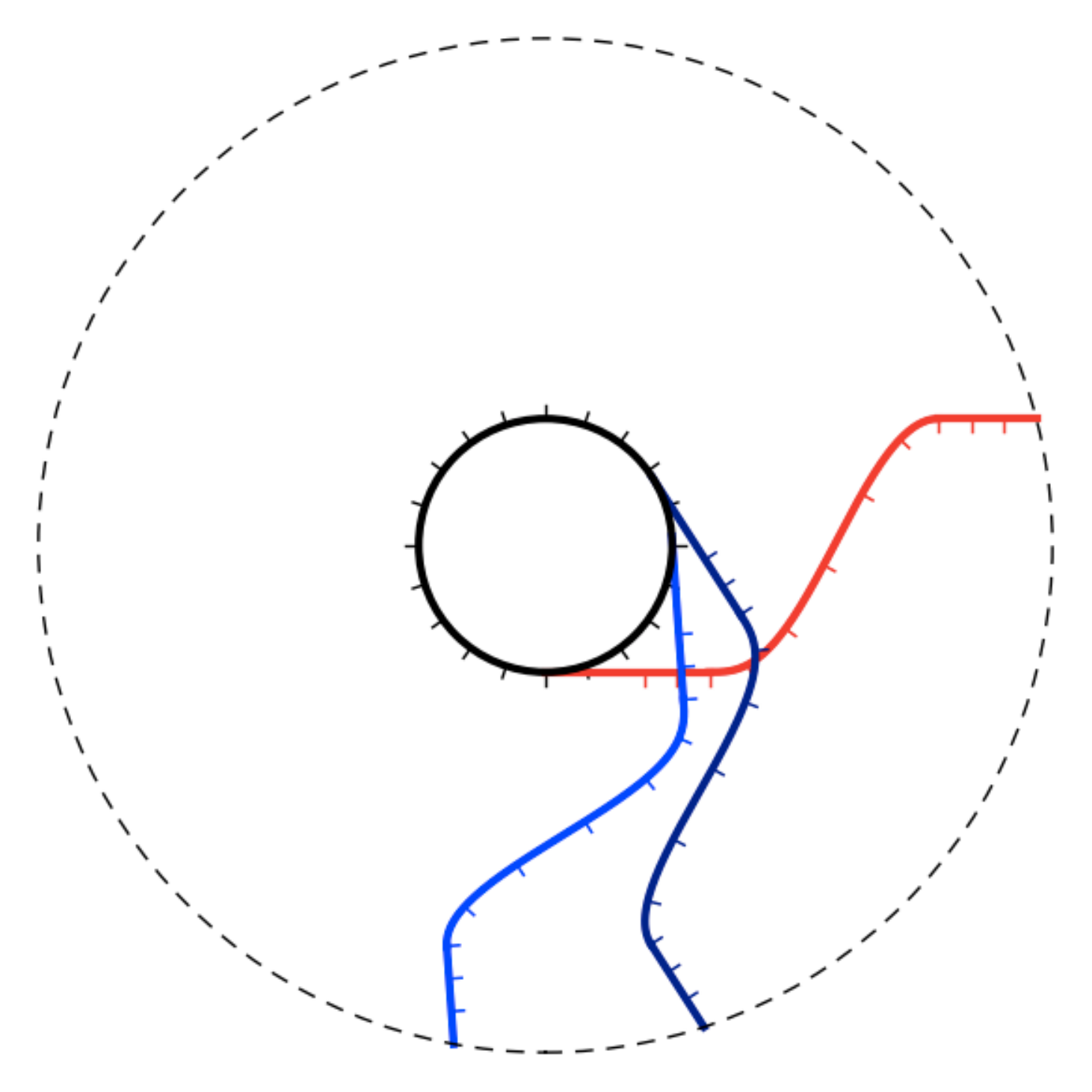}
\end{center}
\caption{The flow $F_t$ from $\protect\ghat^\infty$ (first frame) to $\protect\mutghat^\infty$ (last frame).  Note the distinction from Figure \ref{fig:mutate2}: in the first frame $\cL_{(k)}$ does not lie entirely in the page, but rather is the union of the annulus outside $C_k$ (the black circle) and the conormal to $C_k$.  The disk inside $C_k$ as drawn on the page is the disk $D_k$.  The colored strands are other $C_i$ which intersect $C_k$.}\label{fig:flow}
\end{figure}

\subsection{Mutation inside the 4-manifold}\label{sec:mut4}

A curve configuration $\cC$ and its mutation $\cC'$ give rise to two symplectic manifolds $W$ and $W'$ via the construction of Definition \ref{def:W}.  In this section we consider the relation between $W$ and $W'$.

As we have constructed $W$ by Weinstein handle attachment, it is an exact symplectic manifold 
--- i.e. the symplectic form has a primitive, $\omega = d \theta$ --- with 
convex contact boundary --- i.e., along the boundary, the vector $X$ with $\omega(X, \cdot) = \theta$
points outward.  Actually, we shall prefer to attach an infinite conical end along this boundary, which we 
do without changing the notation for $W$. 
More generally, our conventions regarding exact symplectic manifolds follow \cite{Sei}.  
Our notion of isomorphism of such manifolds is Liouville isomorphism: a symplectomorphism
$f: W \to W'$ such that moreover $\theta - f^* \theta'$ is exact and compactly supported.

Previously, we have viewed both $\cC$ and $\cC'$ as living on the same surface.  However
for our present purposes, we take the view that $\cC'$ sits on a surface $\cL'$ 
which is different from but diffeomorphic to the original surface $\cL$, by a diffeomorphism which we 
have chosen in the process of defining it.  Similarly for the local curve configuration 
$\cC_{(k)}, {\cC'}_{(k)}$.  This choice could have been made in other ways; but a best choice
will be ultimately dictated by Theorem \ref{prop:clustertrans}. 

\begin{theorem}\label{thm:symplectomorphism}
There is a Liouville isomorphism of $W$ and $W'$ which identifies $\L$ and $\L'$ outside a neighborhood of the disk $D_k$.  The preimage of $\cL'$ under this symplectomorphism is obtained from $\cL$ by Lagrangian disk surgery on  $D_k$.  
\end{theorem}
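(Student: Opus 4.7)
The plan is to prove Theorem \ref{thm:symplectomorphism} by reducing to a local analysis in the conical model, constructing the required Liouville isomorphism from the explicit flow $F_t$ of Equation \eqref{eq:damp_flow}, and then identifying its effect on the zero section.

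First, I would reduce to a local statement. By Definition \ref{def:mutation} the configurations $\cC$ and $\cC'$ agree on $\cL^{(k)}$, so the Weinstein handle attachments used to build $W$ and $W'$ are identical outside a compact neighborhood of the handle $H_k$ attached along the Legendrian lift of $C_k$. Hence it suffices, rel boundary, to produce a Liouville isomorphism $\phi$ between local Weinstein neighborhoods of $\L_{(k)}$ and $\L_{(k)}'$ whose behavior on the zero section realizes Lagrangian disk surgery at $D_k$.

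By the identifications $\L_{(k)} \cong \ghat$ and $\L_{(k)}' \cong \mutghat$ established in Section \ref{subsec:con-mod}, the local problem is transported into $T^*\R^2$. There the flow $F_t$ is a damped geodesic flow on $T^\infty \R^2$ that sends $\ghat^\infty$ to $\mutghat^\infty$ at $t=2$ by Definition \ref{def:mutghat}. I would extend $F_t$ to a compactly supported Hamiltonian isotopy $\tilde F_t$ of $T^*\R^2$ by pulling back the generating contact Hamiltonian from the cosphere bundle to $T^*\R^2 \setminus \cL$ via the Liouville scaling action and damping it smoothly near the zero section. The time-$2$ map $\tilde F_2$ is then a compactly supported Liouville automorphism of $T^*\R^2$ taking $\ghat^\infty$ to $\mutghat^\infty$, which glues to the identity outside the handle region to produce the global Liouville isomorphism $W \cong W'$.

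Finally, I would identify $\phi^{-1}(\cL')$ with the disk surgery of $\cL$ along $D_k$. Tracking the zero section under $\tilde F_t$, the disk $D_k$ bounded by (the cone of) $\gamma_k$ shrinks as $t$ increases from $0$ toward $1$, at which point $F_1(\gamma_k)$ is a single point and $\tilde F_1(\R^2)$ acquires a nodal singularity; as $t$ continues from $1$ to $2$, the node is resolved in the opposite way, yielding a new embedded Lagrangian. This is precisely the family defining Lagrangian disk surgery in \cite{LS,Pol,Y}, as indicated in Figure \ref{fig:1ds}. The main technical obstacle lies in the middle step: the contact Hamiltonian lifted by conical scaling is singular at the zero section and must be smoothed, but any smoothing perturbs the flow and can destroy the equality $F_2(\ghat^\infty)=\mutghat^\infty$; a suitable choice of cutoff together with a small Hamiltonian correction should reestablish this equality at $t=2$.
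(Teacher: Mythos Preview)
Your reduction to a local problem and your use of the conical models $\ghat$, $\mutghat$ match the paper. The construction of the symplectomorphism, however, is different: the paper (Definition \ref{def:symp}) does not extend $F_t$ to a Hamiltonian isotopy of $T^*\R^2$. It instead invokes Moser's lemma --- an isotopy of skeleta names an isotopy of Liouville structures, hence a Liouville isomorphism --- and realizes the map on the ball $\widetilde{W}_{(k)}$ as the identity near the center together with the \emph{movie} of the contact isotopy in a collar of the boundary sphere (Figures \ref{fig:lines}--\ref{fig:surgery1d}). That map is not the time-$2$ flow of any Hamiltonian. Your approach of lifting $F_t$ to a damped Hamiltonian $\tilde F_t$ might still produce a valid Liouville isomorphism, but the technical obstacle you flag (smoothing at the zero section destroys homogeneity and the equality $F_2(\ghat^\infty)=\mutghat^\infty$) is real and unresolved.

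The disk surgery identification, however, contains a genuine error. First, $F_1(\gamma_k)$ is not a point: it is the cocircle over the origin, a smooth Legendrian circle whose \emph{front projection} is a point. Second, $\tilde F_1(\R^2)$ cannot acquire a nodal singularity, since $\tilde F_1$ is a diffeomorphism and carries embedded surfaces to embedded surfaces; indeed, having damped $\tilde F_t$ near the zero section, you have $\tilde F_t(\R^2)\approx\R^2$ for all $t$, so tracking $\R^2$ under $\tilde F_t$ exhibits no surgery at all. What actually passes through a node is the family of \emph{conical} skeleta $\L_t := \R^2 \cup \mathrm{cone}\bigl(F_t(\ghat^\infty)\bigr)$: at $t=1$ this is $\R^2 \cup T^*_0\R^2$, and the annulus-plus-conormal realizing $\cL_{(k)}$ inside $\L_t$ degenerates. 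But $\L_t \neq \tilde F_t(\ghat)$ --- the former are conical with varying topology, the latter a smooth family of mutually diffeomorphic images. The paper's argument uses the family $\L_t$, not any $\tilde F_t(\ghat)$, to exhibit the local model $xy=\pm\epsilon$ of Yau's disk surgery; your $\tilde F_t$ does not see this transition.
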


The proof of this theorem occupies the remainder of this section. 
As noted in \cite[Lem. 2.2]{Sei}, Moser's lemma implies that in any isotopy of Liouville
structures preserves the Liouville isomorphism type. 
We construct such an isotopy by describing an isotopy
of skeleta; which implicitly names an isotopy of the corresponding flows.   
We will be somewhat informal, 
omitting in particular all rounding-of-corners arguments.  We refer 
to the treatise \cite{CE} for many methods of manipulating such manifolds. 

\begin{remark}
Strictly speaking, the theorems asserted in the remainder of the paper concern constructible sheaf categories
on skeleta, and do not depend on the results of this subsection. 
\end{remark}

Recall that we write $\cL_{(k)}$ for a small neighborhood in $\cL$ of $C_k$, and $\cL^{(k)}$ for 
a complementary chart; we abusively write $\partial \cL_{(k)} = \partial \cL^{(k)}$ for the overlap
of these charts; equivalently a collar neighborhood of either one of their boundaries.  We likewise 
denote by $\cC_{(k)}, \cC^{(k)}, \partial \cC_{(k)} = \partial \cC^{(k)}$ restriction of the curve configuration
to these spaces.    

Evidently we can apply Definition \ref{def:W} to any of these; we denote the resulting manifolds
by $W_{(k)}, W^{(k)}, \partial W_{(k)} = \partial W^{(k)}$.  Evidently
$$W = W_{(k)} \stackbin[\partial W_{(k)}]{\bigcup}{} W^{(k)}$$

Let us describe $W_{(k)}$ more explicitly.    First we make the co-disk bundle 
$D^* \cL_{(k)} = D^2 \times \cL_{(k)}$.  This is a manifold with
corners: it has boundary components given by the cocircle bundle
$S^* \cL_{(k)} = S^1 \times \cL_{(k)}$, and $D^*\cL|_{\partial \cL_{(k)}} = 
\partial \cL_{(k)} \times D^2 = S^1 \times S^0 \times D^2$.  These intersect along
$S^* \cL|_{\partial \cL_{(k)}} = S^1 \times S^0 \times S^1$.  We now attach
a Weinstein handle along the lift $\Lambda_k$ of $C_k$. 

We write the resulting manifold-with-corners as $W_{(k)}$.  
The intersection of the corresponding Lagrangian skeleton $\L_{(k)}$
with $\partial W_{(k)}$ is as follows.  In each component of 
$D^*\Sigma|_{\partial \Sigma_{(k)}}$, the intersection is a circle of $\partial\Sigma_{(k)}$, 
emanating radial spokes for corresponding to the disk fragments being attached along the 
$C_i \cap \Sigma_{(k)}$ for $i \ne k$.  This disk fragments also intersect the remaining
boundary component, so in all $\L_{(k)} \cap \partial W_{(k)}$ is two circles, joined
by several lines. 

We smooth the corners of $W_{(k)}$ to get a space $\widetilde{W}_{(k)}$; alternatively it 
might be taken as an $\epsilon-$neighborhood of the disk $D_k$. 
The space $\widetilde{W}_{(k)}$ is symplectically a ball.  

We now observe that our conical models can be glued in place of $\widetilde{W}_{(k)}$
and $\widetilde{W}'_{(k)}$.

\begin{proposition} \label{prop:whatisghat}
There is a neighborhood $U$ of $\ghat$ and a symplectomorphism respecting
the boundary
$$(U, \partial U, \ghat \cap \partial U) \cong (\widetilde{W}_{(k)}, \partial \widetilde{W}_{(k)},
\L \cap \partial \widetilde{W}_{(k)})$$ 

There is a neighborhood $U'$ of $\mutghat$ and a symplectomorphism respecting
the boundary
$$(U', \partial U', \mutghat \cap \partial U') \cong (\widetilde{W}_{(k)}', \partial \widetilde{W}_{(k)}',
\L' \cap \partial \widetilde{W}_{(k)}')$$ 
\end{proposition}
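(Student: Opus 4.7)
The plan is to recognize both sides as Weinstein $4$-balls and invoke a Weinstein-type neighborhood theorem, so the statement reduces to matching the Lagrangian skeleta as stratified spaces with Legendrian data at the contact boundary. First I would verify that $\widetilde W_{(k)}$ is a Weinstein $4$-ball: starting from the disk cotangent bundle $D^*\cL_{(k)}$ of an annulus, the attaching of a single Weinstein $2$-handle along the Legendrian lift $\Lambda_k$ of $C_k$ produces a handlebody that, after rounding corners, is topologically and symplectically standard. On the other side, a neighborhood $U$ of $\ghat$ inside $T^*\R^2$ is built from $T^*D^2$ (a Weinstein $4$-ball) together with the cone over the arcs $\gamma_p$, each of which is itself a subcritical handle; in particular $U$ is again a Weinstein $4$-ball.

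The heart of the argument is the identification of skeleta. In $\widetilde W_{(k)}$ the skeleton is the union of the annulus $\cL_{(k)}$, the core disk $D_k$ glued to $\cL_{(k)}$ along $C_k$, and the disk fragments coming from the other curves $C_i$ which cross $C_k$ at the points indexed by $I_k$. In the model $\ghat$ the skeleton is the unit disk in the zero section $\R^2$ (playing the role of $D_k$), together with the cone over the inwardly co-oriented $\gamma_k$ glued to the complement of the unit disk in $\R^2$ (together playing the role of $\cL_{(k)}$), together with cones over the arcs $\gamma_p$ (playing the role of the disk fragments). By construction the combinatorial pattern of attachments, co-orientations, and intersection points in $\ghat^\infty$ was chosen to match that of the Legendrian lift of $\cC_{(k)}$, so the two skeleta are homeomorphic as stratified Lagrangians via a homeomorphism identifying the disk core with the unit disk and the annular piece with the appropriate neighborhood of $\gamma_k$.

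With the skeleta matched, one appeals to a Weinstein neighborhood theorem for arboreal-type singular Lagrangians, in the form used e.g.\ in \cite{CE}: a neighborhood of such a skeleton inside a Weinstein manifold is determined up to Liouville isomorphism by the stratified Lagrangian together with its Legendrian data at infinity. Each stratum of the skeleton here is locally modelled on a standard Lagrangian, and each singularity is either a transverse Lagrangian intersection (where $D_k$ meets $\cL_{(k)}$ along $C_k$, or where other $C_i$ cross $C_k$) or a Weinstein handle gluing, each of which admits a Darboux-type normal form. Combining these normal forms produces a symplectomorphism $U \to \widetilde W_{(k)}$ carrying $\ghat$ to $\L \cap \widetilde W_{(k)}$, and the construction can be arranged to send $\partial U$ to $\partial \widetilde W_{(k)}$ respecting the induced contact structures and the intersections of the skeleta with the boundary. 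The second statement is proved identically, replacing the inward co-orientation of $\gamma_k$ with the outward one; geometrically this reflects the fact that after Lagrangian disk surgery, the mutated surface $\cL'$ attaches to $D_k$ along the opposite side.

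The main obstacle is making the neighborhood theorem precise for our singular skeleta, whose singular locus includes both the nodal circle $C_k\subset \L$ and the transverse intersection points of curves, because off-the-shelf statements are usually formulated either for smooth Lagrangians or for arboreal skeleta in the sense of Nadler. I would handle this by decomposing the skeleton into open pieces each of which admits a standard Darboux neighborhood, and glue the local normal forms together using a partition-of-unity argument in Liouville form, as is routine in \cite{CE}; this is where most of the technical bookkeeping lives, but no new symplectic input is required beyond the standard Weinstein handle decomposition and the convexity of the resulting Liouville structure.
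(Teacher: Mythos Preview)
Your approach has a genuine error at its core. You aim to produce a symplectomorphism $U \to \widetilde W_{(k)}$ that carries $\ghat$ to $\L_{(k)} = \L \cap \widetilde W_{(k)}$, and you propose to do this by matching the skeleta as stratified Lagrangians and invoking a neighborhood theorem. But this is impossible: $\ghat$ and $\L_{(k)}$ are \emph{not} diffeomorphic as stratified spaces --- the paper says so explicitly in the remark immediately following the proposition (``no diffeomorphism can, since these spaces have different singularities''). Concretely, in $\L_{(k)}$ the annulus $\cL_{(k)}$ is the zero section, the conormal half-cylinder over $C_k$ is attached along it, and the handle core $D_k$ caps the far end of that cylinder; in $\ghat$ the zero section $\R^2$ already contains the disk, and the conormal cylinder is attached along the unit circle with nothing capping it. The arcs $\gamma_p$ further differ in how they attach. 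These are different arboreal models of the same Weinstein ball, related by a noncharacteristic deformation in the sense of \cite{N4}, not by a stratified diffeomorphism.

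Read the statement again: it only asks that the symplectomorphism match the triples $(U,\partial U,\ghat\cap\partial U)$ and $(\widetilde W_{(k)},\partial\widetilde W_{(k)},\L\cap\partial\widetilde W_{(k)})$ --- that is, match the \emph{boundary} Legendrians, not the interior skeleta. The paper (which is deliberately informal here and gives no proof) has already observed that $\widetilde W_{(k)}$ is symplectically a ball and that $\L\cap\partial\widetilde W_{(k)}$ is ``two circles, joined by several lines''; one checks the same description for $\ghat\cap\partial U$ directly from Definition~\ref{def:ghat}. The argument then reduces to: two standard symplectic $4$-balls, with the same (singular) Legendrian in $S^3$ up to contact isotopy, are identified by a symplectomorphism respecting that boundary data. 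No neighborhood theorem for singular Lagrangians is needed, and indeed none of the form you invoke would apply.
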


\begin{remark}
The above symplectomorphism certainly does not identify $\ghat$ and $\L_{(k)}$: no diffeomorphism
can, since these spaces have different singularities.  However, one should not expect them to be identified: 
they implicitly name different Morse functions, and the appropriate relation between such functions is one of 
isotopy.  Correspondingly, it can be shown that 
there are deformations $\ghat \sim \L_{(k)}$ and $\mutghat \sim {\L_{(k)}}'$
which are  noncharacteristic in the sense of \cite{N4}.
\end{remark}

Finally, we can describe the desired symplectomorphism: 

\begin{definition} \label{def:symp}
There is a symplectomorphism $\mu_k: W \to W'$ restricting to the evident
identification on
$W \setminus \widetilde{W}_{(k)} = W' \setminus \widetilde{W}'_{(k)}$. 
Identifying $\widetilde{W}_{(k)}$ and $\widetilde{W}'_{(k)}$ to standard balls via
Propositions  \ref{prop:whatisghat}, 
we define the rest of the map on 
$\widetilde{W}_{(k)} \to \widetilde{W}'_{(k)}$ to be the identity 
sufficiently far from the boundary, and a movie of contact isotopy of 
Definition \ref{def:mutghat} near the boundary. 
\end{definition}

For a picture of what is meant, one dimension down, and beginning and ending
with the conormal-to-disk conical model rather than $\ghat$ and $\mutghat$, 
see Figures \ref{fig:lines}, \ref{fig:swirlz}, and \ref{fig:surgery1d}. 

Finally, we have a Lagrangian surface named $\cL$ inside $W$ and a Lagrangian 
surface named
$\cL'$ inside $W'$.  To see that they are related
by the Lagrangian attaching disk surgery of \cite{Y}, recall that 
the local model of the disk surgery is the passage between the hyperbolas
$xy = -\epsilon$ and $xy = \epsilon$ for $\epsilon \in \R$.   A conical model of this
transition is given by the collapsing and re-expanding of the disk in the base in the transition 
of Definition \ref{def:mutghat}.  

\newpage

\begin{figure}[h]
\centering
%\begin{tikzpicture}
%\node[matrix] (a) at (0,0) {
%\draw[thick] (-.3,1) -- (-.3,-1); 
%\draw[thick] (-.3,0) -- (.3,0); 
%\draw[thick] (.3,-1) -- (.3,1);
%\node at (-.3,1.5) {A};
%\node at (.3,1.5) {B};
%\node at (-.3,-1.5) {D};
%\node at (.3,-1.5) {C};
%\\};
%
%\node[matrix] (a) at (4,0) {
%\draw[thick] (-.6,-1) -- (-.6,0); 
%\draw[thick] (.6, 1) -- (.6, 0);
%\draw[thick] (-.6,0) -- (.6,0); 
%\draw[thick] (0,-1) -- (0,1);
%\node at (0,1.5) {A};
%\node at (.6,1.5) {B};
%\node at (-.6,-1.5) {D};
%\node at (0,-1.5) {C};
%\\};
%
%\node[matrix] (a) at (8,0) {
%\draw[thick] (-1.2, 0) -- (1.2, 0);
%\draw[thick] (0,-1) -- (0,1);
%\node at (0,1.5) {A};
%\node at (1.5, 0) {B};
%\node at (-1.5,0) {D};
%\node at (0,-1.5) {C};
%\\};
%
%
%\node[matrix] (a) at (12,0) {
%\draw[thick] (-.6,1) -- (-.6,0); 
%\draw[thick] (.6, -1) -- (.6, 0);
%\draw[thick] (-.6,0) -- (.6,0); 
%\draw[thick] (0,-1) -- (0,1);
%\node at (0,1.5) {A};
%\node at (.6,-1.5) {B};
%\node at (0,-1.5) {C};
%\node at (-.6,1.5) {D};
%\\};
%
%\node[matrix] (a) at (16,0) {
%\draw[thick] (-.3,1) -- (-.3,-1); 
%\draw[thick] (-.3,0) -- (.3,0); 
%\draw[thick] (.3,-1) -- (.3,1);
%\node at (-.3,1.5) {D};
%\node at (.3,1.5) {A};
%\node at (.3,-1.5) {B};
%\node at (-.3,-1.5) {C};
%\\};
%\end{tikzpicture}
\includegraphics[scale=0.9]{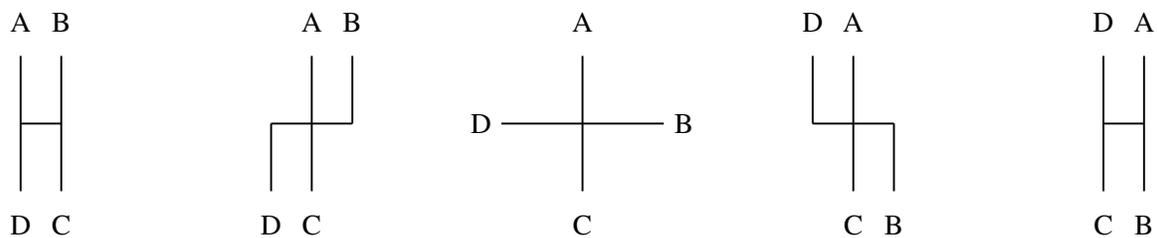}
\caption{The disk surgery, realized as a cone over a Legendrian isotopy. In this 1-dimensional picture the Legendrian is 4 points at the boundary of $T^*\R$.}
\label{fig:lines}
\end{figure}

\begin{figure}[h]
  \centering
%  \begin{tikzpicture}
%\newcommand*{\orad}{3.3}; \newcommand{\irad}{1.3};
%
%\draw[dashed] (0,0) circle (\orad);
%\draw[dashed] (0,0) circle (\irad);
%\foreach \ang/\lett in {135/A,315/C} {
%\draw[thick] (\ang:\orad) to[out=\ang+135,in=\ang+45]
%(\ang-60:2.2)
%to[out=\ang-135,in=\ang-45] (\ang-90:\irad);
%\node at (\ang:3.7) {$\lett$};
%\node at (\ang-90:.9) {$\lett$};}
%\foreach \ang/\lett in {45/B,225/D} {
%\draw[thick] (\ang:\orad) to[out=\ang-135,in=\ang-135] (\ang-90:\irad);
%\node at (\ang:3.7) {$\lett$};
%\node at (\ang-90:.9) {$\lett$};}
%  \end{tikzpicture}
\includegraphics[scale=0.8]{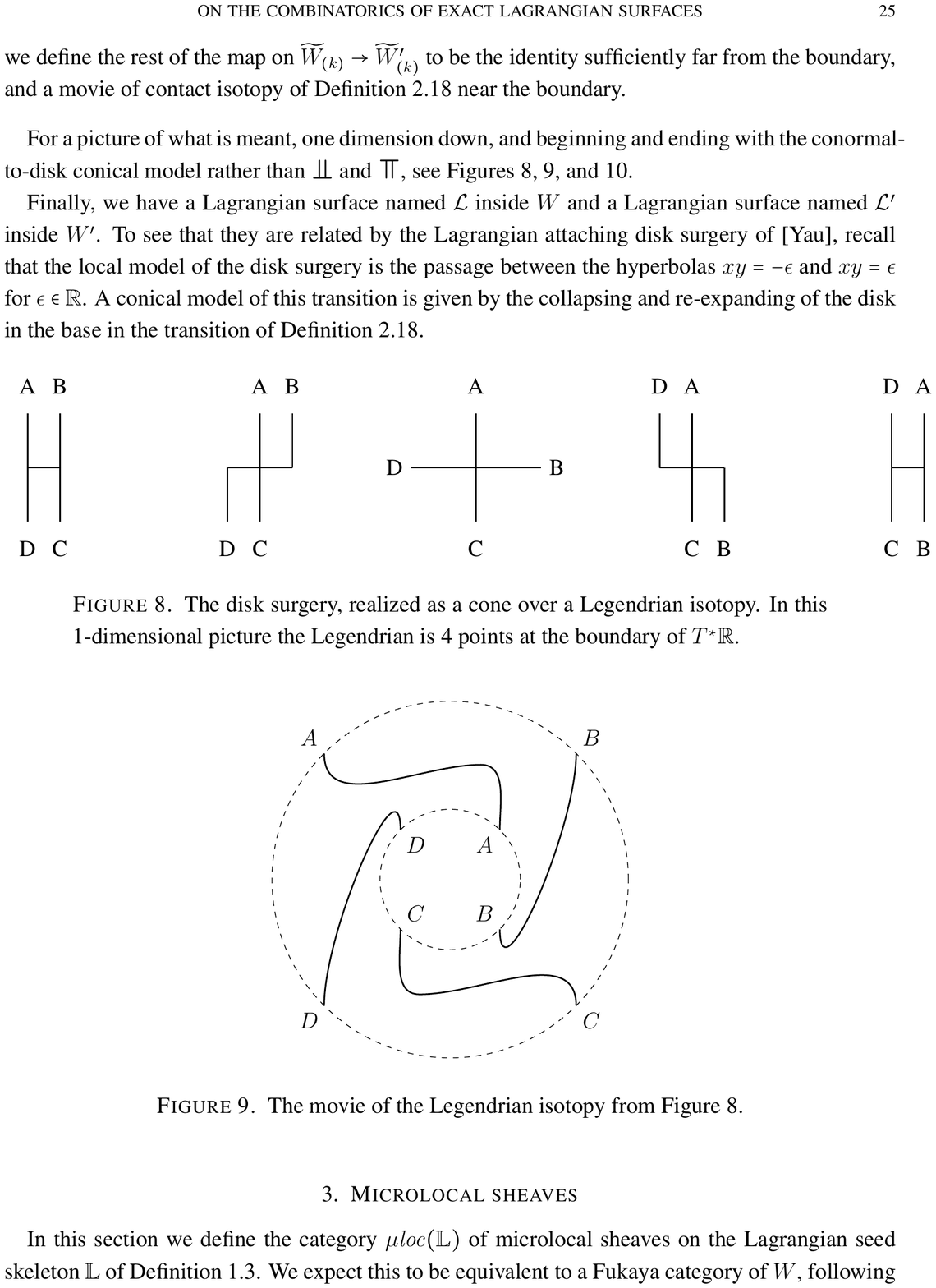}
  \caption{The movie of the Legendrian isotopy from Figure \ref{fig:lines}.}
  \label{fig:swirlz}
\end{figure}

\begin{figure}[h]
\centering
%\begin{tikzpicture}
%\newcommand*{\orad}{3.3}; \newcommand{\irad}{1.3};
%\draw[dashed] (0,0) circle (\orad);
%\foreach \ang/\lett in {45/B,135/A,225/D,315/C} {
%	\node at (\ang:3.7) {$\lett$};
%}
%\draw[thick] (45:\orad) -- (315:\orad);
%\draw[thick] (135:\orad) -- (225:\orad);
%\draw[thick] (2.33343, 0) -- (-2.33343, 0); 
%\end{tikzpicture}
%%
%\hspace{2cm}
%\begin{tikzpicture}
%\newcommand*{\orad}{3.3}; \newcommand{\irad}{1.3};
%\draw[dashed] (0,0) circle (\orad);
%\draw[dashed] (0,0) circle (\irad);
%\foreach \ang/\lett in {135/A,315/C} {
%\draw[thick] (\ang:\orad) to[out=\ang+135,in=\ang+45]
%(\ang-60:2.2)
%to[out=\ang-135,in=\ang-45] (\ang-90:\irad);
%\node at (\ang:3.7) {$\lett$};}
%\foreach \ang/\lett in {45/B,225/D} {
%\draw[thick] (\ang:\orad) to[out=\ang-135,in=\ang-135] (\ang-90:\irad);
%\node at (\ang:3.7) {$\lett$};}
%\draw[thick] (45:\irad) -- (315:\irad);
%\draw[thick] (225:\irad) -- (135:\irad);
%\draw[thick] (0.91923,0) -- (-0.91923,0); 
%\end{tikzpicture}
\includegraphics[scale=0.8]{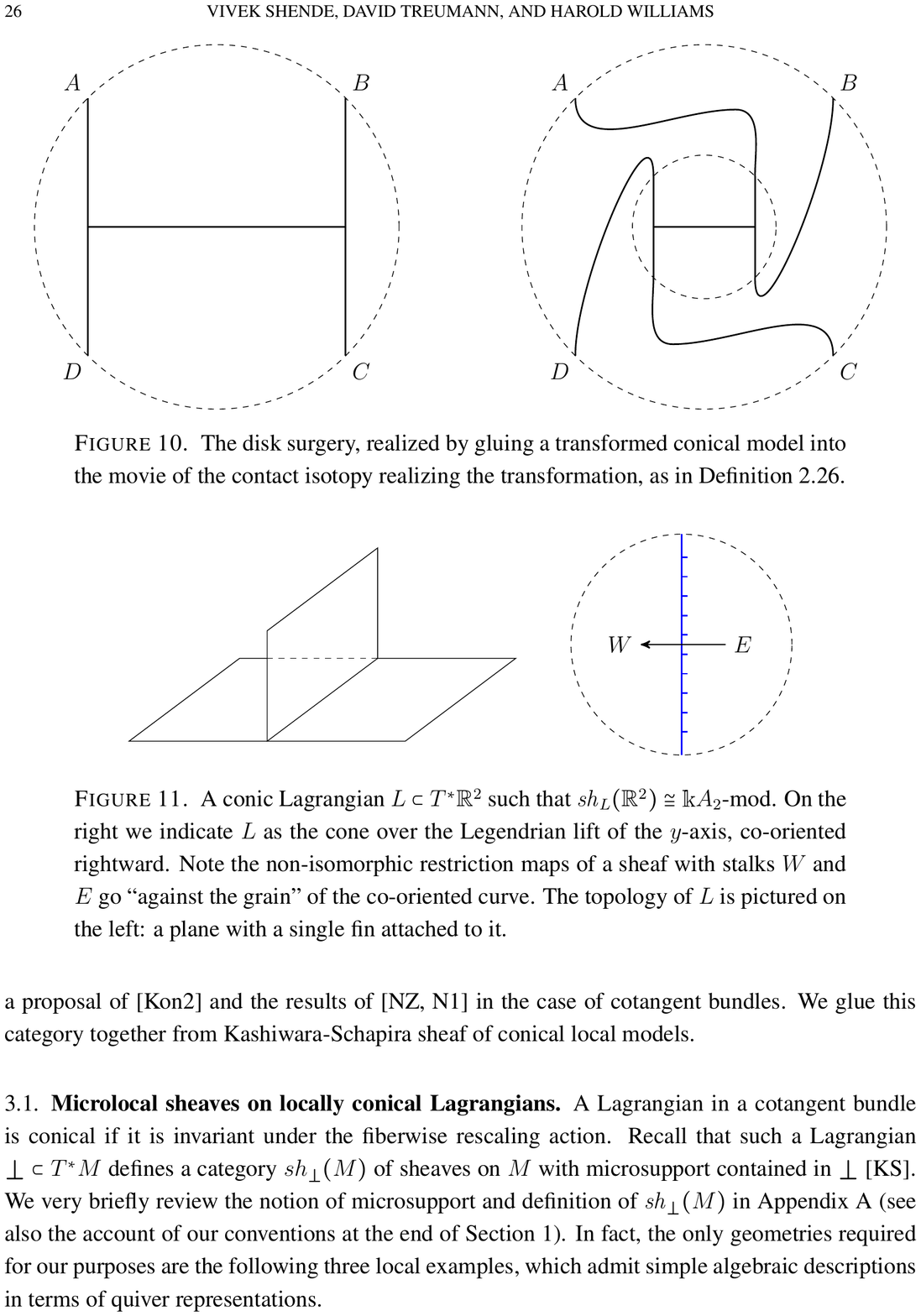}
\caption{The disk surgery, realized by gluing a transformed conical model into the
movie of the contact isotopy realizing the transformation, as in Definition \ref{def:symp}.}
\label{fig:surgery1d}
\end{figure}

\newpage

%!TEX root = COS2.tex
\section{Microlocal sheaves} 

In this section we define the category $\muloc(\L)$ of microlocal sheaves on the Lagrangian seed skeleton $\L$ of Definition \ref{def:W}. 
We expect this to be equivalent to a Fukaya category of $W$, following a proposal of \cite{K} and the results of \cite{NZ,N1} in the case of cotangent bundles. We glue this category together from 
Kashiwara-Schapira sheaf of conical local models. 

\subsection{Microlocal sheaves on locally conical Lagrangians}\label{sec:locallyconic}

\begin{figure}
\centering
%  
%\begin{tikzpicture}
%\newcommand*{\rad}{2};
%\pgfmathsetmacro{\vrad}{\rad/1.6};
%\newcommand*{\acol}{black}; %color of arrows
%\newcommand*{\scol}{blue}; %color of strands
%\newcommand{\nhrs}{18}; %number of hairs
%\pgfmathsetmacro{\nhrsminusone}{\nhrs-1}; %number of hairs minus one
%\newcommand*{\hhrln}{.13}; %length of horizontal hairs
%\newcommand*{\dhrln}{.1}; %length of diagonal hairs
%\newcommand*{\mdist}{5.5}; %distance between diagram
%\newcommand*{\x}{.4};
%\newcommand*{\y}{.3};
%\newcommand*{\z}{.8};
%
%\node [matrix,cells={scale=2.5}] at (0,0) {
%\draw (-\x,\y) to (-1+\x,\y) to (-1-\x,-\y) to (1-\x,-\y) to (1+\x,\y) to (\x,\y);
%\draw[dashed] (\x,\y) to (-\x,\y);
%\draw (-\x,-\y) to (\x,\y) to (\x,\y+\z) to (-\x,\z-\y) to cycle;
%\\};
%
%\node (a) [matrix] at (6.5,0) {
%\draw[dashed] (0,0) circle (\rad);
%\draw[asdstyle,righthairs] (0,0) to (90:\rad);
%\draw[asdstyle,lefthairs] (0,0) to (-90:\rad);
%\node (W) at (180:\rad/1.8) {$W$}; \node (E) at (0:\rad/1.8) {$E$};
%\draw[genmapstyle] (E) to (W);\\};
%
%\end{tikzpicture}
\includegraphics{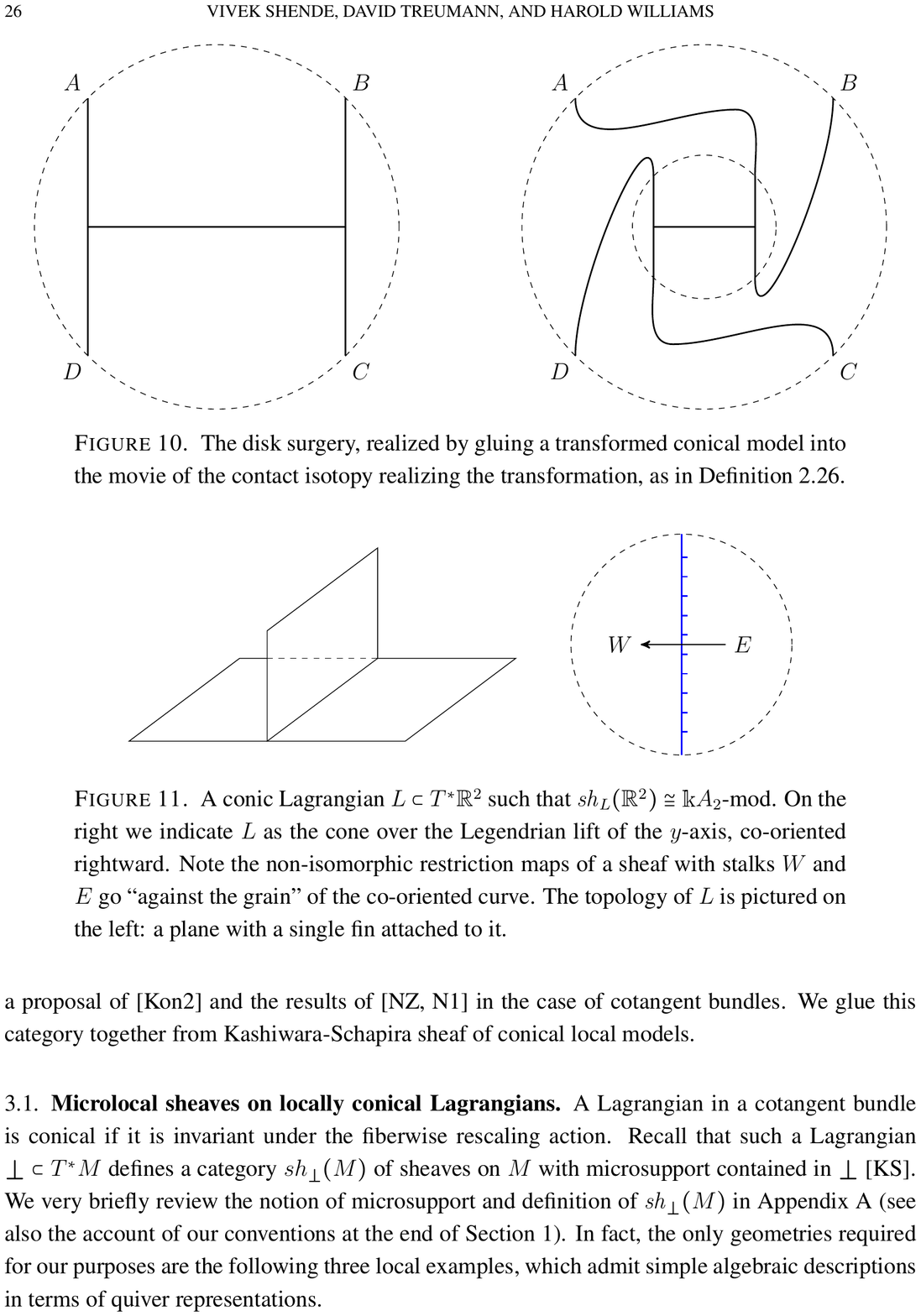}
\caption{A conic Lagrangian $L \subset T^*\R^2$ such that $\sh_L(\R^2) \cong \coeffs A_2\dmod$. On the right we indicate $L$ as the cone over the Legendrian lift of the $y$-axis, co-oriented rightward. Note the non-isomorphic restriction maps of a sheaf with stalks $W$ and $E$ go ``against the grain'' of the co-oriented curve. The topology of $L$ is pictured on the left: a plane with a single fin attached to it.}
\label{fig:A2}
\end{figure}

A Lagrangian in a cotangent bundle is conical if it is invariant under the fiberwise rescaling action.  
Recall that such a Lagrangian $\eeT \subset T^*M$ defines a category 
$sh_\eeT(M)$ of sheaves on $M$ with microsupport contained in $\eeT$ \cite{KS}.  
We very briefly review the notion of microsupport and definition of $\sh_\eeT(M)$ 
in Appendix \ref{app:sheaves} (see also the account of our conventions at the end of Section \ref{sec:intro}). 
In fact, the only geometries required for our purposes are the following three local examples, which admit simple algebraic descriptions in terms of quiver representations. 

\begin{example}
Locally constant sheaves are characterized by having singular support equal to the zero section.  
Thus, regarding $M$ as a conical Lagrangian in its cotangent bundle, $sh_M(M)$ is just $loc(M)$, the 
category of locally constant sheaves. 
\end{example}

\begin{figure}
\centering
%\begin{tikzpicture}
%\newcommand*{\rad}{2};
%\pgfmathsetmacro{\vrad}{\rad/1.6};
%\newcommand*{\acol}{black}; %color of arrows
%\newcommand*{\scol}{blue}; %color of strands
%\newcommand{\nhrs}{18}; %number of hairs
%\pgfmathsetmacro{\nhrsminusone}{\nhrs-1}; %number of hairs minus one
%\newcommand*{\hhrln}{.13}; %length of horizontal hairs
%\newcommand*{\dhrln}{.1}; %length of diagonal hairs
%\newcommand*{\mdist}{5.5}; %distance between diagram
%\newcommand*{\x}{.4};
%\newcommand*{\y}{.3};
%\newcommand*{\z}{.8};
%\newcommand*{\zb}{.65};
%
%\node (l) [matrix] at (0,0) {
%\draw[dashed] (0,0) circle (\rad);
%\foreach \ang in {-45,45} {\draw[asdstyle,righthairsnogap] (0,0) to (\ang:\rad);}
%\foreach \ang in {135,225} {\draw[asdstyle,lefthairsnogap] (0,0) to (\ang:\rad);}
%\node (S) at (-90:\vrad) {$S$}; \node (W) at (180:\vrad) {$W$};
%\node (E) at (0:\vrad) {$E$}; \node (N) at (90:\vrad) {$N$};
%\foreach \c/\d in {S/W,S/E,W/N,E/N} {\draw[genmapstyle] (\c) to (\d);}\\};
%
%\node [matrix,cells={scale=2.3}] at (6.5,0) {
%\draw (-\x,\y) to (-1+\x,\y) to (-1-\x,-\y) to (1-\x,-\y) to (1+\x,\y) to (\x,\y);
%\draw[dashed] (\x,\y) to (-\x,\y);
%\draw (-\x,-\y) to (\x,\y) to (\x,\y+\z) to (-\x,\z-\y) to cycle;
%\draw[dashed] (-1,0) to (-1,-\y);
%\draw (-1,-\y) to (-1,-\zb) to (1,-\zb) to (1,0) to (0,0);
%\draw[dashed] (0,0) to (-\x,0);
%\draw (-\x,0) to (-1,0);
%\\};
%\end{tikzpicture}
\includegraphics{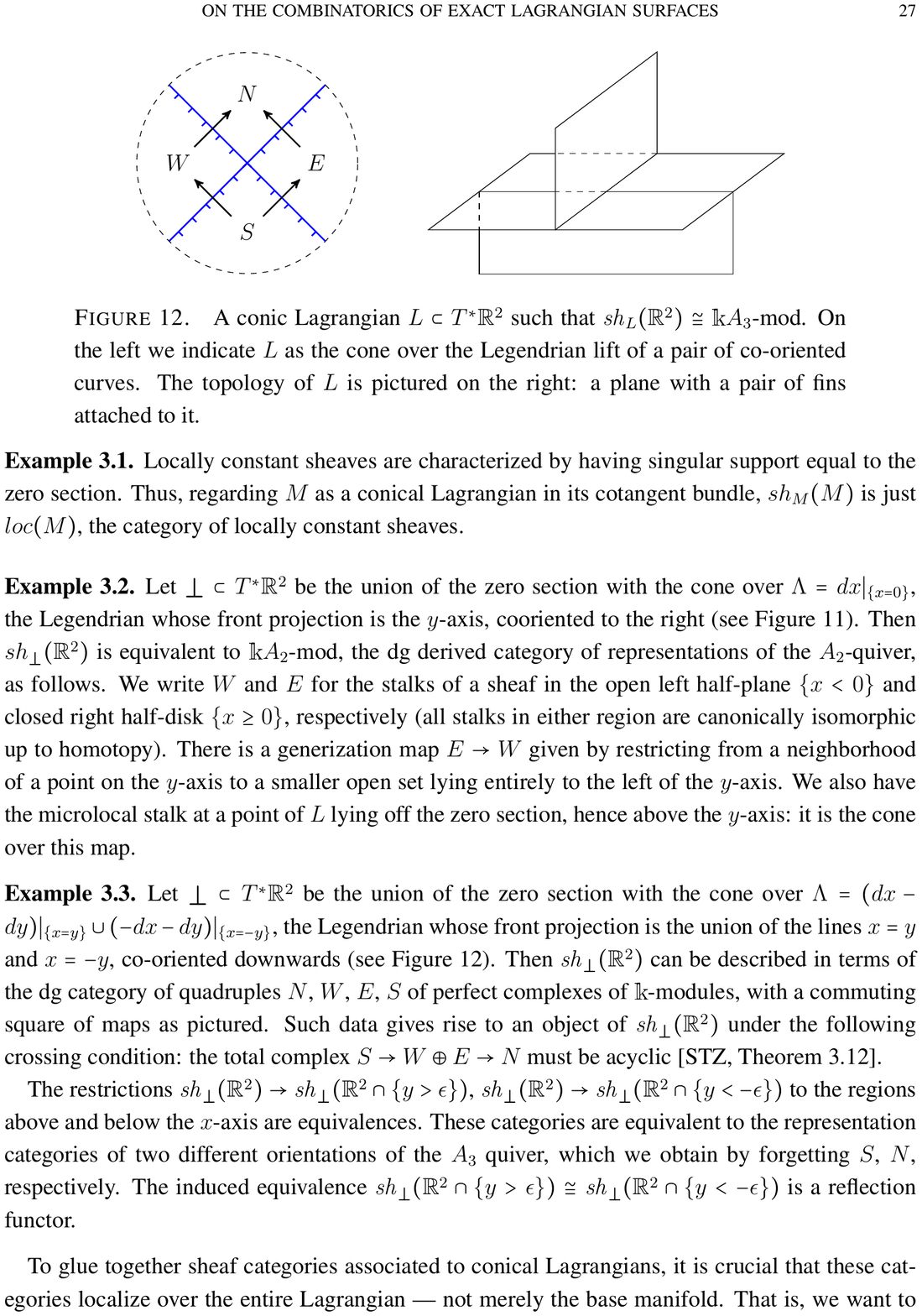}
\caption{\label{fig:A3} A conic Lagrangian $L \subset T^*\R^2$ such that 
 $\sh_{L}(\R^2) \cong \coeffs A_3\dmod$. 
On the left we indicate $L$ as the cone over the Legendrian lift of a pair of co-oriented curves. 
The topology of $L$ is pictured on the right: a plane with a pair of fins attached to it.}
\end{figure}

\begin{example}\label{ex:A2}
Let $\eeT \subset T^*\R^2$ be the union of the zero section with the cone over $\Lambda = dx|_{\{x=0\}}$, the Legendrian whose front projection is the $y$-axis, cooriented to the right (see Figure \ref{fig:A2}). 
Then $sh_\eeT(\R^2)$ is equivalent to $\coeffs A_2\dmod$, the dg derived category of representations of the $A_2$-quiver, as follows. 
We write $W$ and $E$ for the stalks of a sheaf in the open left half-plane $\{x< 0\}$ and closed right half-disk $\{x \geq 0\}$, respectively (all stalks in either region are canonically isomorphic up to homotopy). 
There is a generization map $E \to W$ given by restricting from a neighborhood of a point on the $y$-axis to a smaller open set lying entirely to the left of the $y$-axis. 
We also have the microlocal stalk at a point of $L$ lying off the zero section, hence above the $y$-axis: it is the cone over this map.  
\end{example}

\begin{example}\label{ex:A3}
Let $\eeT \subset T^*\R^2$ be the union of the zero section with the cone over $\Lambda = (dx - dy)|_{\{x=y\}} \cup (-dx-dy)|_{\{x=-y\}}$, the Legendrian whose front projection is the union of the lines  $x=y$ and $x=-y$, co-oriented downwards (see Figure \ref{fig:A3}). 
Then $\sh_\eeT(\R^2)$ can be described in terms of the dg category of quadruples $N$, $W$, $E$, $S$ of perfect complexes of $\coeffs$-modules, with a commuting square of maps as pictured.  Such data gives rise to an object of $\sh_\eeT(\R^2)$ 
under the following crossing condition: the total complex $S\to W\oplus E\to N$ must be acyclic \cite[Theorem 3.12]{STZ}.  

The restrictions $\sh_\eeT(\R^2) \to \sh_\eeT(\R^2 \cap \{y>\epsilon\})$, $\sh_\eeT(\R^2) \to \sh_\eeT(\R^2 \cap \{y<-\epsilon\})$ to the regions above and below the $x$-axis are equivalences.  These categories are equivalent to the representation categories of two different orientations of the $A_3$ quiver, which we obtain by forgetting $S$, $N$, respectively.  The induced equivalence $\sh_\eeT(\R^2 \cap \{y>\epsilon\}) \cong \sh_\eeT(\R^2 \cap \{y<-\epsilon\})$ is a reflection functor.
\end{example}

To glue together sheaf categories associated to conical Lagrangians, it is crucial that these categories localize 
over the entire Lagrangian --- not merely the base manifold.  That is, we want to realize these sheaf categories as the global
sections of a (homotopy) sheaf of dg categories over $\eeT$.  
While working with sheaves of dg categories requires the homotopical 
foundations of, for example, \cite{Lur1,Toe,Tab}, the treatise \cite{KS} 
predates these texts by a number of years. 
We briefly sketch how the relevant geometric results of \cite{KS}  
may be adapted to the differential graded setting at hand (see also \cite[Sec. 2.1]{SiTZ} and \cite[Sec. 5.2]{N2}).

Kashiwara and Schapira study $T^*M$ in the conic topology, i.e., the open sets are $\R_+$-invariant.  
For each conic open subset $U \subset T^* M$, they 
define a category --- we denote it here by $\mathrm{KS}^{pre}(U)$ --- as the quotient of 
the category of sheaves on $M$ by the category of sheaves with microsupport not meeting $U$. 
This is a presheaf of dg categories on $T^* M$: if we write $\mathrm{dgCat}$ for the (appropriately
homotopical) 
category of dg categories, 
then $U \mapsto \mathrm{KS}^{pre}(U)$ is a contravariant functor from the category of open subsets of $T^* M$ to $\mathrm{dgCat}$.

Each object of $\mathrm{KS}^{pre}(U)$ has a well-defined (conic, co-isotropic) support in $U$. Thus 
for a conic Lagrangian subset $\eeT \subset T^*M$, there is a presheaf of full subcategories 
$\mathrm{KS}^{pre}_L(U) \subset \mathrm{KS}^{pre}(U)$ of objects supported on $L$, which by definition vanishes when $U \cap L$ is empty.  
The sheafification  $\mathrm{KS}_\eeT$ of $\mathrm{KS}_\eeT^{pre}$ is therefore supported on $\eeT$. 

\begin{definition}
For a conic Lagrangian $\eeT \subset T^*M$, we write $\muloc$ for the sheaf of dg categories on $\eeT$ given by
the restriction of $\mathrm{KS}_\eeT$ to $\eeT$.  
\end{definition}

The stalks of $\mathrm{KS}_{all}$, hence of $KS_\eeT$ and $\muloc$, are determined in \cite[Thm. 6.1.2]{KS}.  
In particular, 
\begin{itemize}
\item If $U \subset T^*M$ is a conic open set of the form $T^*\pi(U)$, then 
$\muloc(\eeT \cap U) \cong sh_{\eeT \cap U}(\pi(U))$. 
\item If $U \subset T^*M$ is a conic open set that does not meet the zero section and is sufficiently small, 
$\muloc(\eeT\cap U)$ is the quotient of $sh_{\pi(U) \cup (\eeT \cap U)}(\pi(U))$ by $\loc(\pi(U))$. 
\end{itemize}

These results allow us to conclude that $\muloc$ is indeed the desired localization: 

\begin{proposition}  
For a conical Lagrangian $\eeT \subset T^*M$, the global section category
$\muloc(\eeT)$ is equivalent to $sh_\eeT(T^*M)$.
\end{proposition}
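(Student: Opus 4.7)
The plan is to identify $\muloc(\eeT)$ with $\sh_\eeT(M)$ directly from the construction of $\muloc$ as a sheafification, using the support condition and the stalk formulas recalled just before the proposition.

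First I would use the support condition to move the computation off of $\eeT$ and onto the whole cotangent bundle. Since $\mathrm{KS}^{pre}_\eeT(U)$ is zero by definition whenever $U \cap \eeT = \emptyset$, the same is true of the sheafification $\mathrm{KS}_\eeT$; consequently $\mathrm{KS}_\eeT$ is supported on $\eeT$, and its global sections on $\eeT$ agree with its global sections on any conic open neighborhood of $\eeT$ in $T^*M$. In particular, $\muloc(\eeT) = \mathrm{KS}_\eeT(T^*M)$. On the other hand, by the very definition of the presheaf, $\mathrm{KS}^{pre}_\eeT(T^*M)$ is the full subcategory of $\mathrm{KS}^{pre}(T^*M) = \sh(M)$ consisting of objects whose microsupport lies in $\eeT$, and this is exactly $\sh_\eeT(M)$. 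The sheafification morphism therefore furnishes a canonical comparison functor $\sh_\eeT(M) \to \muloc(\eeT)$, and the proposition reduces to showing this is a quasi-equivalence.

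Next I would establish this by exhibiting a basis of conic opens of $T^*M$ on which $\mathrm{KS}^{pre}_\eeT$ already satisfies descent, and which contains the total space $T^*M$. Take the basis consisting of (i) opens of the form $T^*V$ for $V \subset M$ open, and (ii) sufficiently small conic opens disjoint from the zero section, as in the two bulleted stalk calculations from \cite{KS} recalled above. On opens of type (i) the presheaf agrees with $V \mapsto \sh_{\eeT \cap T^*V}(V)$, and it is standard that constructible sheaves with prescribed singular support form a sheaf of dg categories on $M$; on opens of type (ii) the corresponding statement is the microlocal Morse-theoretic descent of \cite[Ch.~6]{KS}. Since $T^*M$ itself belongs to the basis, with $V = M$, the sheafification preserves its value there, and so $\mathrm{KS}_\eeT(T^*M) = \mathrm{KS}^{pre}_\eeT(T^*M) = \sh_\eeT(M)$. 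Combined with Step~1 this gives $\muloc(\eeT) \simeq \sh_\eeT(M)$.

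The main obstacle is the descent verification in the last step, i.e.\ promoting Kashiwara--Schapira's pointwise stalk computation to genuine dg-categorical descent for the (homotopy) sheafification in the sense of \cite{Tab2}. Concretely, for a cover $\{U_\alpha\}$ of $T^*M$ by opens drawn from the basis above, one must check that the natural Čech comparison
\[
\mathrm{KS}^{pre}_\eeT(T^*M) \longrightarrow \lim\Bigl(\prod_\alpha \mathrm{KS}^{pre}_\eeT(U_\alpha) \rightrightarrows \prod_{\alpha,\beta} \mathrm{KS}^{pre}_\eeT(U_\alpha \cap U_\beta) \cdots \Bigr)
\]
is a quasi-equivalence. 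This is morally the statement that microsupport is a local invariant and that a sheaf is determined by compatible local microlocal data, but doing it cleanly requires working in the model of \cite{Tab2} and combining classical Čech descent for $\sh(M)$ over $M$ (handling opens of type (i)) with the quotient descriptions on opens of type (ii) and their compatibility on overlaps. Once this descent is in hand, the rest of the proof is bookkeeping.
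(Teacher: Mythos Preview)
Your Step~1 is right and coincides with the paper's implicit first move: reduce to showing that the sheafification map $\sh_\eeT(M)=\mathrm{KS}^{pre}_\eeT(T^*M)\to \mathrm{KS}_\eeT(T^*M)=\muloc(\eeT)$ is an equivalence. Where you diverge is in Step~2. You try to verify descent for $\mathrm{KS}^{pre}_\eeT$ on a basis for the full conic topology of $T^*M$, which forces you to include small conic opens off the zero section and to confront the descent verification you flag as the ``main obstacle.''

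The paper sidesteps this obstacle entirely by pushing forward along $\pi:\eeT\hookrightarrow T^*M\to M$ and comparing sheaves on $M$ rather than on $T^*M$. On $M$, your type~(i) opens $V\mapsto T^*V$ already form a basis, and on them $\mathrm{KS}^{pre}_\eeT(T^*V)\cong \sh_{\eeT\cap T^*V}(V)$ is just the usual sheaf of constructible sheaves with prescribed microsupport. So the ``sheaf of sheaves'' $V\mapsto \sh_{\eeT\cap T^*V}(V)$ maps to $\pi_*\muloc$, and one checks this is an isomorphism on stalks using the \cite[Thm.~6.1.2]{KS} stalk computation already recorded in the bullet points. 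Taking global sections over $M$ gives the result. In short: your type~(i) descent is exactly what gets used, but by moving to $M$ it suffices on its own, and the type~(ii) opens---along with the genuinely delicate microlocal descent you worried about---never enter.
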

\begin{proof}
The global sections of $\muloc$ can be calculated by first pushing forward along 
$\eeT \hookrightarrow T^* M \to M$ and then pushing forward to a point.  But the 
pushforward to $M$ admits a map from the ``sheaf of sheaves'' on $M$ 
--- i.e. the sheaf whose value over $U \subset M$ is the derived category of sheaves on $U$.  
It follows from the above results that this map is an isomorphism at stalks, hence an isomorphism
of sheaves, hence induces an isomorphism of global sections. 
\end{proof}

We are interested in gluing conical Lagrangians along open neighborhoods of their boundary components --- both components in the base manifold $M$, and  components at contact infinity $T^\infty M$. 
As $\muloc$ is constant in the fibre direction, we are not careful with the difference between $\Lambda$ and a collar
neighborhood of $\Lambda$.  

It will be useful to have the following trivialization of the restrction of $\muloc$ to $\Lambda$.  

\begin{lemma} \label{lem:mumon}
Let $\eeT \subset T^*M$ be a conical Lagrangian such that $\Lambda = \eeT \cap T^\infty M$ is 
smooth and $\pi: \Lambda \to M$ is an immersion.  Let $\loc$ denote the sheaf of categories on 
$\Lambda$ whose sections give categories 
of local systems.   On $\Lambda$, there is a canonical equivalence 
$\muloc|_{\Lambda} \cong \loc$. 
\end{lemma}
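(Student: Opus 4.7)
Plan: Both $\muloc|_\Lambda$ and $\loc$ are sheaves of dg categories on $\Lambda$, so it suffices to construct a functorial equivalence on a basis of open sets. I will cover $\Lambda$ by open pieces of the form $\Lambda \cap U$ where $U \subset T^*M \setminus (\text{zero section})$ is a small conic open set with the property that $\pi|_{\Lambda \cap U}$ is an embedding onto a smooth coorientated hypersurface $N_U \subset \pi(U)$, with coorientation inherited from the contact direction of $\Lambda \subset T^\infty M$. Such $U$ exist by the immersion hypothesis, and any conic open cover of $\Lambda$ refines to a cover of this form.

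For such a $U$, the second bullet recalled just before the lemma gives
\[
\muloc(\eeT \cap U) \;\cong\; \sh_{\pi(U) \cup (\eeT \cap U)}(\pi(U)) \,/\, \loc(\pi(U)).
\]
Since $\eeT \cap U$ is (locally) the conormal ray bundle of the coorientated hypersurface $N_U$, the category of sheaves with singular support in $\pi(U) \cup (\eeT \cap U)$ admits the description of Example \ref{ex:A2}: an object is a pair of local systems $F_+, F_-$ on the two sides of $N_U$ together with a cospecialization map $\phi \colon F_+ \to F_-$ (against the coorientation). The cone of $\phi$ is naturally a local system on $N_U \cong \Lambda \cap U$, and this cone functor realizes exactly the microlocal stalk along $\Lambda \cap U$. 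The full subcategory $\loc(\pi(U))$ is carved out by those objects for which $\phi$ is a quasi-isomorphism, i.e. those whose cone vanishes. Hence the quotient is equivalent to $\loc(\Lambda \cap U)$, giving a local equivalence
\[
\muloc|_\Lambda(\Lambda \cap U) \;\congto\; \loc(\Lambda \cap U).
\]

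The next step is to check that these local equivalences glue. On the intersection $U \cap U'$ of two patches, both equivalences are canonically identified with the microlocal stalk functor of Kashiwara--Schapira, which is intrinsic to $\muloc$ once the coorientation has been specified, and the coorientation is determined globally by $\Lambda \subset T^\infty M$. Hence the local equivalences are compatible with restriction along $\Lambda \cap U' \hookrightarrow \Lambda \cap U$ and assemble into a morphism of sheaves of dg categories $\muloc|_\Lambda \to \loc$, which by construction is a stalkwise equivalence and therefore an equivalence of sheaves.

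The main obstacle is the gluing step: one must be sure that the cone-of-cospecialization description of the microlocal stalk is not merely a choice of local model but truly functorial in $U$, so that the cocycle data on overlaps are automatic. This reduces to the standard fact that specialization/cospecialization is natural in the coorientated hypersurface, together with the observation that the global coorientation of $\Lambda$ trivializes the shift/sign ambiguity in the definition of the microlocal stalk.
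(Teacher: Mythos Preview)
Your proposal is correct and takes essentially the same approach as the paper. Both arguments cover $\Lambda$ by small balls on which $\pi$ is an embedding onto a coorientated hypersurface, and both define the local equivalence via the microlocal stalk, i.e.\ the cone of the generization map across that hypersurface; the paper phrases this Morse-theoretically using a defining function $f$ and the cone of $F(f^{-1}(-\infty,\epsilon)) \to F(f^{-1}(-\infty,-\epsilon))$, while you phrase it via the $A_2$ model and the quotient description of $\muloc$, but these are the same functor and the gluing justification is identical.
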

\begin{proof} 
Since $\pi$ restricts to an immersion on $\Lambda$, it restricts to an 
embedding on any sufficiently small open ball $B \subset \Lambda$.  Let $f$ be a real-valued smooth function on a neighborhood of $\pi(B)$ such that $f^{-1}(0) = \pi(B)$ and $df|_{\pi(B)}$ is nonzero and contained in the cone over $\Lambda$.  To an object $F$ of $\muloc(B)$ we assign the local system on $B$ whose global sections are $Cone (F(f^{-1}(-\infty,\epsilon)) \to F(f^{-1}(-\infty,-\epsilon)))$.  
This defines an equivalence $\muloc(B) \congto \loc(B)$ \cite[Chap. 3]{KS}, compatible with restrictions hence inducing 
an isomorphism of sheaves. 
\end{proof}

\begin{remark}
When $\Sigma$ is a surface and $\Lambda$ a Legendrian knot such that $\Lambda \to \Sigma$ is not necessarily
an immersion, the rotation number of $\Lambda$ measures the failure of $\muloc|_\Lambda$ to be trivializable \cite{STZ}. 
See \cite{Gui} for some considerations in the general case. 
\end{remark}

We are now ready to consider microlocal sheaves on spaces which are only locally conical:

\begin{definition}\label{def:KSsheafgeneral} 
A Kashiwara-Schapira (KS) sheaf on a topological space $\T$ is a sheaf $\muloc$ of dg categories such that 
there exists 
\begin{itemize}
\item an open cover $\T = \bigcup \T_i$
\item embeddings $\iota_i: \T_i \into T^* M_i$ whose images are conical Lagrangians
\item equivalences $\muloc|_{\T_i} \cong \muloc|_{\iota_i(\T_i)}$.  
\end{itemize}
\end{definition}

We call a category of the form $\muloc(\T)$ for some KS sheaf $\muloc$ ``a category of microlocal sheaves on $\T$''.

\begin{remark}
In the above assertion $\muloc|_{\T_i} \cong \muloc(\iota_i(\T_i))$, the first $\muloc$ is the
given sheaf of categories on $\T$, and the second $\muloc$ is the Kashiwara-Schapira sheaf in the local model. 
\end{remark}

A space $\T$ generally does not have a unique KS sheaf --- but when $\T$ is presented as the skeleton of a Weinstein manifold $W$, it should be possible to specify a choice of $\muloc$ through some further trivializations of topological structures on $W$, or equivalently $\T$. 
We will instead content ourselves with constructing an explicit choice in the case at hand: %, rather than specifying the above data. 
given a cover $\T = \bigcup \T_i$ and conic Lagrangian embeddings $\iota_i: \T_i \hookrightarrow T^* M_i$, one can glue together a KS sheaf from the categories $sh_{\iota_i(\T_i)}(M_i)$. 
This requires specifying descent data, in particular equivalences $\muloc|_{\iota_i(\T_i \cap \T_j)} \cong 
\muloc|_{\iota_j(\T_i \cap \T_j)}$.

For a fixed $\muloc$, there will also generally be many other choices of open cover and conical embeddings witnessing the fact that $\muloc$ is a KS sheaf. 
We use the freedom to pass between such models in an essential way. 

\subsection{Microlocal sheaves on seed skeleta}\label{sec:KSseed}

We return to the setting of Section \ref{sec:geometry}, letting $\cC$ be a curve configuration on a surface $\cL$, and $\L \subset W$ the Lagrangian skeleton of the associated 4-manifold.  By construction $\L$ is equipped with an open cover by conic Lagrangians, namely the positive conormal bundle $\TC \subset T^*\cL$ of the curve configuration and, for
the closed curves $C_i$, cotangent bundles of open disks $D_i \subset T^*D_i$. 

\begin{convention} We write $\cC^\circ \subset \cC$ to denote the subset of closed curves. \end{convention}

\begin{definition}\label{def:KSsheaf} 
We define a KS sheaf on $\L$, by gluing the KS sheaves of the conical Lagrangians $\TC \subset T^*\cL$ and 
$D_i \subset T^* D_i$ along the annuli $\TC \cap D_i$.  

This requires gluing data on the annuli (and no more, since there are no multiple overlaps).
We identify each restriction with $\loc|_{\TC \cap D_i}$: for $\muloc|_{D_i} = \loc|_{D_i}$ this is immediate. 
For $\muloc|_{\TC}$ we use the Morse trivialization of Lemma \ref{lem:mumon}, composed with the autoequivalence of 
$\loc|_{\TC \cap D_i}$ given multiplying the monodromy by $\sigma(C_i)$, for some fixed choice $\sigma: \cC^\circ \to \Z/2\Z$. 

We denote the resulting sheaf as $\muloc_\sigma$, and generally omit the $\sigma$. 
\end{definition} 

\begin{remark}
The abstract construction of Definition \ref{def:KSsheaf} can be made concrete in practice: in the examples
of interest, $\muloc(\L)$ is built from categories of quiver representations by taking homotopy limits, 
and these can be computed explicitly using path categories \cite{Tab}.  We work out a
crucial example in great detail in Section \ref{sec:L0}.
\end{remark}

There is a natural identification between functions from $\cC^\circ$ to $\Z/2\Z$ and $H^2(\L, \cL; \Z/2\Z)$.  Note the long exact sequence
of cohomology 
$$H^1(\cL; \Z/2\Z) \to H^2(\L, \cL; \Z/2\Z) \to H^2(\L; \Z/2\Z) \to H^2(\cL; \Z/2\Z)$$

\begin{proposition} \label{prop:sigmambiguity}
The category $\muloc^\sigma(\L)$ only depends on the image of $\sigma$ in $H^2(\L; \Z/2\Z)$. 
\end{proposition}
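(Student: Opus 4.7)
The plan is to prove this by identifying the ambiguity in $\sigma$ coming from $H^1(\cL; \Z/2\Z)$ with an explicit autoequivalence of the pieces being glued in Definition \ref{def:KSsheaf}. Exactness of the sequence gives that the kernel of $H^2(\L, \cL; \Z/2\Z) \to H^2(\L; \Z/2\Z)$ is the image of the connecting map $\delta: H^1(\cL; \Z/2\Z) \to H^2(\L, \cL; \Z/2\Z)$, so it suffices to construct an equivalence $\muloc^\sigma(\L) \cong \muloc^{\sigma'}(\L)$ whenever $\sigma' = \sigma + \delta(\alpha)$ for some $\alpha \in H^1(\cL; \Z/2\Z)$.

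First I would unwind $\delta$ at the cochain level. Excising $\cL$ from the pair $(\L, \cL)$ gives $H^2(\L, \cL; \Z/2\Z) \cong \bigoplus_{C_i \in \cC^\circ} H^2(D_i, \partial D_i; \Z/2\Z) \cong \bigoplus_{C_i \in \cC^\circ} \Z/2\Z$, matching the identification of $\sigma$ with a function $\cC^\circ \to \Z/2\Z$. Under these identifications, $\delta(\alpha)$ is the tuple $(\alpha([C_i]))_{C_i \in \cC^\circ}$ of mod-$2$ holonomies of $\alpha$ around the attaching curves.

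Next I would exhibit the desired equivalence by modifying the gluing data. The cocycle data for $\muloc^\sigma$ consists, for each closed curve $C_i$, of the autoequivalence of $\loc|_{\TC \cap D_i}$ on the annulus that multiplies monodromy by $\sigma(C_i)$. The class $\alpha$ is realized by a rank-one $\Z/2\Z$-local system $\cE_\alpha$ on $\cL$; tensoring with (the pullback of) $\cE_\alpha$ gives an autoequivalence $T_\alpha$ of $\muloc|_{\TC}$, since tensor product with a locally constant sheaf preserves singular support. Under the Morse trivialization of Lemma \ref{lem:mumon}, $T_\alpha$ restricts on the annulus neighborhood of $C_i$ to the autoequivalence that multiplies monodromy by $\alpha([C_i])$. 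Combining $T_\alpha$ on $\muloc|_{\TC}$ with the identity on each $\muloc|_{D_i} = \loc|_{D_i}$ changes the gluing data by precisely the tuple $(\alpha([C_i]))_i = \delta(\alpha)$, yielding an equivalence $\muloc^\sigma \cong \muloc^{\sigma + \delta(\alpha)}$ of sheaves of dg categories; passing to global sections concludes.

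The main obstacle I expect is bookkeeping the homotopy coherence data for the modified gluing, since we are working with a homotopy sheaf of dg categories in the sense of \cite{Tab2}. Fortunately the cover in Definition \ref{def:KSsheaf} has no triple overlaps: the pieces $\TC$ and $\{D_i\}$ meet only pairwise along the annuli $\TC \cap D_i$, which are themselves disjoint. Hence the $2$-cocycle condition reduces to a collection of $1$-cocycle verifications, one per annulus, each of which is precisely the compatibility of $T_\alpha$ with the Morse trivialization recorded above. A secondary check is that the equivalence $T_\alpha$ is well-defined on the sheaf level, not merely on global sections; this follows because tensor product with a local system commutes with restriction and cone, so assembles into a natural autoequivalence of the presheaf $\mathrm{KS}_{\TC}^{pre}$ of Section \ref{sec:locallyconic}, which sheafifies to the desired autoequivalence of $\muloc|_{\TC}$.
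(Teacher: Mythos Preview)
Your proposal is correct and follows essentially the same approach as the paper: the paper's proof is a one-sentence version of your argument, observing that if $\sigma_1$ and $\sigma_2$ differ by an element of $H^1(\cL;\Z/2\Z)$, then tensoring with the associated local system gives an autoequivalence of $\muloc(\TC)$ that induces $\muloc_{\sigma_1}(\L)\cong\muloc_{\sigma_2}(\L)$. Your unwinding of the connecting map $\delta$ and your careful treatment of the homotopy-coherence bookkeeping (noting the absence of triple overlaps) simply flesh out what the paper leaves implicit.
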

\begin{proof}
If $\sigma_1$ and $\sigma_2$ differ by an element of $H^1(\cL,\Z/2\Z)$, tensoring with the associated local system is an autoequivalence of $\muloc(\TC)$ inducing an equivalence $\muloc_{\sigma_1}(\L) \cong \muloc_{\sigma_2}(\L)$. 
\end{proof}

\begin{remark}
The Fukaya category depends on a class in $H^2(W; \Z/2\Z) =  H^2(\L; \Z/2\Z)$; 
for $\cL$ to support any branes in this category, the restriction of the class to $\cL$ must vanish \cite[Sec. 12]{Sei}. 
\end{remark}

\begin{remark}\label{rmk:infrank}
On occasion it is also convenient to consider a larger category $\muloc^\infty$ of microlocal sheaves with cohomologically bounded but infinite-rank stalks. This is defined the same way as $\muloc$, but replacing the local models $\sh_{\TC}(\cL)$ and $\loc(D_i)$ by their weakly constructible versions $\sh_{\TC}^\infty(\cL)$ and $\loc^\infty(D_i)$.
\end{remark}

\begin{definition} For $p \in \L$, we write $\cF_p$ for the image of $\cF$ under the functor to the stalk category
$\muloc(\L) \to \muloc(\L)_p$. 
The {\em support} of $\cF$ is the set of points  $p \in \L$ at which $\cF_p$ is nonzero. 
\end{definition}

\begin{remark}
Note that discussing the precise value of the stalk will generally require choosing trivializations of $\cF_p$, but
discussing when it is zero does not. 
\end{remark}

 Up to equivalence $\muloc(\L)_p$ is classified by the local models discussed in Section \ref{sec:locallyconic}: when $p$ is a smooth point, $\muloc(\L)_p$ is (non-canonically) equivalent to the dg derived category of perfect complexes of $\coeffs$-modules \cite[Chap. 6]{KS}; when $p$ lies on one of the $C_i$, but not at a crossing, $\muloc(\L)_p$ is equivalent to $\coeffs A_2\dmod$; when $p$ lies at a crossing of the $C_i$, $\muloc(\L)_p$ is equivalent to $\coeffs A_3\dmod$.

\begin{proposition}\label{prop:fullyfaithful}
There is a fully faithful functor $\loc(\cL) \into \muloc(\L)$ whose essential image is the full subcategory of microlocal sheaves supported on $\cL \subset \L$. 
\end{proposition}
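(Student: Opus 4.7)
The plan is to construct $\iota$ explicitly from the gluing presentation of $\muloc$ given in Definition \ref{def:KSsheaf}, and then read off both full faithfulness and the essential image statement from the associated homotopy limit. Concretely, given $\mathcal{E} \in \loc(\cL)$, I would set $\iota(\mathcal{E})$ to be the section of $\muloc$ whose component on the $\TC$-patch is $\mathcal{E}$ itself --- viewed in $\sh_{\TC}(\cL)$, noting that the singular support of a local system is the zero section, which is contained in $\TC$ --- whose component on each disk $D_i$ is the zero object of $\loc(D_i)$, and whose gluing data on each overlap annulus is the unique (up to contractible choice) isomorphism between zero objects. The key compatibility to check is that $\mathcal{E}$ actually restricts to zero on each Legendrian $\Lambda_i$ under the Morse trivialization of Lemma \ref{lem:mumon}: by that lemma the restriction is computed as the cone of a generization map $\mathcal{E}(U^+) \to \mathcal{E}(U^-)$ between connected open subsets of $\cL$, and for a local system this map is a quasi-isomorphism, so the cone vanishes. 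In particular the $\sigma(C_i)$-twist is inoperative on the zero object and the gluing is well-defined.

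To establish full faithfulness, I would compute $\Hom_{\muloc(\L)}(\iota\mathcal{E}_1,\iota\mathcal{E}_2)$ using the \v{C}ech-style homotopy limit for the cover $\{U_{\TC}\} \cup \{U_{D_i}\}$, whose only nontrivial overlaps are the annuli $U_{\TC} \cap U_{D_i}$ (the disk interiors being pairwise disjoint in $\L$). The $U_{D_i}$-terms are $\Hom_{\loc(D_i)}(0,0) \simeq 0$, and likewise for the overlap terms, since restrictions on both sides are zero. The homotopy limit thus collapses to $\Hom_{\muloc(\TC)}(\mathcal{E}_1,\mathcal{E}_2) = \Hom_{\sh_{\TC}(\cL)}(\mathcal{E}_1,\mathcal{E}_2)$, which equals $\Hom_{\loc(\cL)}(\mathcal{E}_1,\mathcal{E}_2)$ because $\loc(\cL)$ sits as a full subcategory of $\sh_{\TC}(\cL) \subset \sh(\cL)$.

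For the essential image, let $\mathcal{F} \in \muloc(\L)$ be supported on $\cL$. Since $\L \setminus \cL$ is the disjoint union of the open disks $D_i \setminus C_i$, the hypothesis forces the stalks of $\mathcal{F}|_{D_i}$ to vanish on the interior of $D_i$; because $\mathcal{F}|_{D_i} \in \loc(D_i)$ is locally constant on a connected space, it must be the zero object. The gluing datum then forces $\mathcal{F}|_{\TC}$ to have trivial restriction to each $\Lambda_i$ in the Morse trivialization, which is to say its microstalks along the Legendrian lifts vanish. Thus $\mathcal{F}|_{\TC} \in \sh_{\TC}(\cL)$ has singular support confined to the zero section, so by Kashiwara-Schapira its cohomology is locally constant, placing $\mathcal{F}|_{\TC}$ in $\loc(\cL)$. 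One then checks $\mathcal{F} \simeq \iota(\mathcal{F}|_{\TC})$ by comparing gluing data (both zero).

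The main obstacle is pinning down precisely the Morse trivialization at the Legendrian annulus and verifying that a local system really does restrict to the zero object there. Once this is done --- and combined with the fact that the only overlaps in the defining cover are two-fold --- the rest is essentially formal manipulation of the homotopy limit, so I would expect the total length of the proof to be short.
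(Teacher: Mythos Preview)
Your proposal is correct and follows essentially the same approach as the paper: both construct the functor via the defining homotopy fibre product $\muloc(\TC) \times_{\loc(\coprod \Lambda_i)} \loc(\coprod D_i)$, use that local systems have vanishing microstalks along the $\Lambda_i$ to land in the $0 \times_0 0$ corner, and read off full faithfulness from the collapse of the limit. Your treatment is more explicit than the paper's (which is terse to the point of leaving the essential image claim implicit), but the underlying argument is identical.
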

\begin{proof}
We have $\loc(\cL) \subset \muloc(\cL \cup \bigcup_i T^+_{C_i} \cL)$. 
Local systems have vanishing microlocal stalks away from the zero section, so the restriction
morphism $\loc(\cL) \to \loc(\coprod \Lambda_i)$ is the zero morphism.  The
only element of $\coprod D_i$ which restricts to $0$ is the zero sheaf.  There is a well-defined functor $\loc(\cL) \to \muloc(\L)$ given by
$$\loc(\cL) = \loc(\cL) \times_0 0 \subset \muloc( \cL \cup N^+ C_i) \times_{loc (\coprod \Lambda_i )} \loc(D_i).$$
It is clear from this the morphism is fully faithful. 
\end{proof} 

With this in mind we regard $\loc(\cL)$ as a full subcategory of $\muloc(\L)$ from now on without comment.
More generally, by the same sort of argument, 

\begin{proposition} \label{prop:inclusion} If $\cD$ is any subset of the curve configuration $\cC$, and 
$\iota: \L_{\cD} \subset \L_{\cC}$ is the associated inclusion of skeleta, then there is a canonical,
locally fully faithful functor $\iota_* \muloc \to \muloc$ whose global sections are a fully faithful
inclusion $\muloc(\L_{\cD}) \to \muloc(\L_{\cC})$ with image exactly equal the microlocal sheaves
supported on $\L_{\cD}$. 
\end{proposition}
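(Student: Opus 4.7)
The plan is to model the proof on Proposition \ref{prop:fullyfaithful}, but work locally with the Kashiwara–Schapira sheaves before gluing. The key observation is that for any inclusion of conic Lagrangians $\eeT_\cD \subset \eeT_\cC$ in a common cotangent bundle $T^*M$, the condition on microsupport gives a tautological fully faithful inclusion $\sh_{\eeT_\cD}(M) \hookrightarrow \sh_{\eeT_\cC}(M)$, whose essential image is exactly the objects microsupported on $\eeT_\cD$.

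First I would verify that this inclusion localizes as a morphism of sheaves of dg categories over $\eeT_\cC$: over any conic open $U \subset T^*M$, the presheaves $\mathrm{KS}^{pre}_{\eeT_\cD}(U) \subset \mathrm{KS}^{pre}_{\eeT_\cC}(U)$ include one into the other as full subcategories of $\mathrm{KS}^{pre}(U)$, and sheafification then gives a local fully faithful map $\iota_* \muloc_{\eeT_\cD} \to \muloc_{\eeT_\cC}$ on $\eeT_\cC$. On stalks, one checks the essential image at $p \in \eeT_\cD$ is exactly the full subcategory supported at $p$, using the local classification of stalks (perfect complexes at smooth points, $A_2$ at smooth points of a curve, $A_3$ at crossings; cf. Section \ref{sec:locallyconic}). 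At a point $p \in \eeT_\cC \setminus \eeT_\cD$ the essential image consists of objects with zero microlocal stalk at $p$, which is exactly what is needed.

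Next, I would globalize by covering $\L_\cC$ with the charts used in Definition \ref{def:KSsheaf}: the $\TC$-chart in $T^*\cL$, and cotangent bundles $T^*D_i$ of disks $D_i$ glued along the overlap annuli. Restricting the cover to $\L_\cD$ uses $\TCD \subset \TCC$ in the same $T^*\cL$, and keeps only the disks $D_i$ with $C_i \in \cD^\circ$. The local inclusions from the previous paragraph are strictly compatible with the identifications on the overlap annuli, since the monodromy‐twist equivalence used in Definition \ref{def:KSsheaf} is indexed by the curve and does not interact with adding further curves. This produces the desired morphism of sheaves of categories $\iota_* \muloc_{\L_\cD} \to \muloc_{\L_\cC}$, locally fully faithful with image the microlocal sheaves supported on $\L_\cD$.

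For global sections, full faithfulness then follows from the fact that $\muloc(\L_\cC)$ is the homotopy limit of its local pieces, together with the local fully faithfulness; concretely, one presents $\muloc(\L_\cC)$ via the fiber product
\[
\muloc(\L_\cC) \;\cong\; \sh_{\TCC}(\cL) \;\times_{\prod \loc(\Lambda_i)}\; \prod_{C_i \in \cC^\circ} \loc(D_i),
\]
and similarly for $\L_\cD$. The essential image at the level of global sections is exactly the full subcategory of sheaves with zero stalks on every $p \in \L_\cC \setminus \L_\cD$, i.e.\ those supported on $\L_\cD$, by applying the local characterization pointwise. The main obstacle I anticipate is purely bookkeeping: verifying that the monodromy‐twist gluing data of Definition \ref{def:KSsheaf} is genuinely compatible with enlarging the curve collection, so that extending a section of $\muloc_{\L_\cD}$ by the zero object on the new disks and by the microsupport‐zero extension across the new conormal fins really does define a section of $\muloc_{\L_\cC}$.
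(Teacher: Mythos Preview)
Your proposal is correct and follows essentially the same approach as the paper: the paper does not give a separate proof of this proposition but simply says it holds ``by the same sort of argument'' as Proposition~\ref{prop:fullyfaithful}, and what you have written is precisely a fleshed-out version of that argument, passing through the fiber-product presentation of $\muloc(\L_\cC)$ and extending by zero on the extra disks. Your additional care in verifying compatibility with the gluing data of Definition~\ref{def:KSsheaf} and in localizing the inclusion as a map of KS sheaves is more detail than the paper provides, but not a different strategy.
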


We wish to discuss  objects  in $\muloc(\L)$ coming from sheaves in the underived sense, i.e., with
cohomology concentrated in degree zero.  More precisely, consider the restriction morphism 
$\muloc(\L) \to \muloc(\TC) = \sh_{\TC}(\cL)$.  We define $\muLoc(\L)$ to be the full subcategory
whose objects are the preimage of objects in $\sh_{\TC}(\cL)$ whose cohomology is concentrated in
degree zero.  

\subsection{Moduli Spaces}\label{sec:moduli}

The moduli theory of objects in dg categories such as $\muloc(\L)$ was developed in \cite{TV}. The setting is that of derived algebraic geometry; for general background we refer to \cite{Lur3,Toe2,TV2}. Our main interest here is in moduli spaces of Lagrangian branes, especially in the subspace parametrizing branes supported on a fixed exact Lagrangian: we can conclude two Lagrangians are not Hamiltonian isotopic if these subspaces do not coincide. However, as this question can be decided by considering only the truncations of the moduli spaces  involved, it is essentially one of ordinary algebraic geometry. With this in mind the reader will lose little in bypassing the discussion of derived moduli spaces and proceding with Definition \ref{def:classical} in mind.

Recall that $\muloc(\L)$ is a homotopy limit of dg categories of $A_n$-quiver representations. There are locally geometric derived stacks parametrizing such representations \cite{TV}, and taking homotopy limits of these yields the moduli stack of objects in $\muloc(\L)$:

\begin{definition} \label{def:moduli}
If $\L$ is the Lagrangian skeleton associated to a curve configuration $\cC$, we write $\R\cM(\L)$ for the locally geometric derived stack of objects in $\muloc(\L)$.
\end{definition}

In derived geometry, the infinitesimal study of derived moduli spaces can be more accessible than that of ordinary moduli spaces. For example, writing $\R Loc(\cL)$ for the derived moduli stack of local systems on $\cL$, we have the following consequence of Proposition \ref{prop:fullyfaithful}: 

\begin{proposition}
The inclusion $\loc(\cL) \into \muloc(\L)$ associated to the embedding $\cL \subset \L$ induces an open inclusion $\R Loc(\cL) \into \R \cM(\L)$.
\end{proposition}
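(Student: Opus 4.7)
The plan is to verify that the induced map $\R Loc(\cL) \to \R\cM(\L)$ between derived moduli stacks satisfies the two standard criteria for being an open immersion of locally geometric derived stacks: it is a monomorphism of functors of points, and it is formally étale. Both moduli stacks are locally geometric and locally of finite presentation by the theorem of \cite{TV} applied to the dg categories $\loc(\cL)$ and $\muloc(\L)$, so these two properties together yield an open immersion.

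For the monomorphism property, I would unwind that for any affine derived scheme $S$, the $S$-points of $\R\cM(\L)$ correspond to objects of the base-changed dg category $\muloc(\L) \otimes_\coeffs \mathrm{Perf}(S)$, and similarly for $\R Loc(\cL)$. Tensoring the fully faithful functor of Proposition \ref{prop:fullyfaithful} with $\mathrm{Perf}(S)$ yields a functor which is again fully faithful, and whose essential image consists of those $S$-families of microlocal sheaves supported on $\cL \subset \L$. Hence the induced map on $S$-points is fully faithful, which is exactly the definition of a monomorphism of stacks.

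For formal étaleness, I would use that the tangent complex of the Toën--Vaquié moduli stack of objects in a dg category $\mathcal{T}$ at an object $\cF$ is (a shift of) the derived endomorphism complex $\R\mathrm{End}_\mathcal{T}(\cF)$. Full faithfulness of $\loc(\cL) \hookrightarrow \muloc(\L)$ identifies these endomorphism complexes, so the induced map on tangent complexes at any $\cF \in \loc(\cL)$ is an equivalence; this extends from closed points to arbitrary $S$-points by the tensoring argument of the previous paragraph. As a sanity check one can also see openness directly: the complement $\L \setminus \cL$ is a disjoint union of the finitely many open disks $D_i^\circ$, on each of which $\muloc$ is modeled by local systems, so an object of $\muloc(\L)$ lies in the essential image of $\loc(\cL)$ iff its microstalk at one chosen interior point of each $D_i$ vanishes; vanishing of a perfect complex being an open condition on $S$-families, this cuts out an open substack.

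The main technical obstacle is ensuring that all of these constructions — the fully faithful functor, the identification of essential image, the computation of tangent complexes, and the openness of the microstalk-vanishing locus — behave well in $S$-families and descend through the homotopy limits used to construct $\muloc(\L)$ in Definition \ref{def:KSsheaf}. These compatibilities follow from the general formalism of \cite{TV,Toe3,Tab} but require some care because $\muloc(\L)$ is only presented as a global sections object of a homotopy sheaf of dg categories, not as a category of modules over a single algebra.
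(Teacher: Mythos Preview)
Your proposal is correct and follows essentially the same route as the paper's proof: both deduce the result from full faithfulness of $\loc(\cL)\hookrightarrow\muloc(\L)$, using it to obtain injectivity on points and, via the identification of tangent complexes with self-Ext algebras \cite[Thm.~0.2]{TV}, \'etaleness. The paper's argument is a terse three sentences; your version supplies the base-change argument for the monomorphism step and the microstalk-vanishing sanity check, but the underlying strategy is the same.
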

\begin{proof}
This follows formally from the fact that $\loc(\cL) \into \muloc(\L)$ is a faithful inclusion of dg categories. Indeed, it follows from this 
that the morphism is injective on points, and since 
the tangent complexes to the moduli spaces are given by self-ext algebras \cite[Thm 0.2]{TV}, it follows that the map is \'etale. 
\end{proof}

With this in mind we will primarily restrict our attention to the following objects, where $\R Loc_n(\cL) \subset \R Loc(\cL)$ denotes the substack of rank $n$ local systems:

\begin{definition}
The moduli space $\R \cM_n(\L)$ of rank $n$ microlocal sheaves on $\L$ is the component of $\R \cM(\L)$ containing the image of $\R Loc_n(\cL)$ under extension by zero.
\end{definition}

As noted before, our present interest in the moduli space $\R \cM(\L)$ is largely in that organizes subspaces of the form $\R Loc(\cL')$ for Lagrangians $\cL'$ obtained by iterated surgery on $\cL$. While the higher and derived structures on $\R \cM(\L)$ are important for many purposes, the question of distinguishing these subspaces can studied at the level of truncations without losing any information:

\begin{proposition}\label{def:classical}
We let $\cM_n(\L)$ denote the substack of the truncation $t_0 \R \cM_n(\L)$ parametrizing objects without negative self-extensions. It is an Artin stack in the classical sense, and extension by zero induces an open map $\Loc_n(\cL) \into \cM_n(\L)$ from the classical moduli stack of rank $n$ local systems on $\cL$.
\end{proposition}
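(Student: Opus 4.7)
The plan is to combine the Toën--Vaquié representability theorem for moduli of objects in dg categories with the full faithfulness of extension by zero established in Proposition \ref{prop:fullyfaithful}. The derived stack $\R\cM(\L)$ is already locally geometric by Definition \ref{def:moduli}, so the work lies in showing that restricting to the rank $n$ component, imposing vanishing of negative self-extensions, and truncating produces a classical Artin $1$-stack, and that extension by zero is an open immersion onto an open substack of it.

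For the first assertion, I would invoke the standard cotangent complex analysis for moduli of objects: at a point $E$ of $\R\cM(\L)$, the tangent complex is a shift of $R\Hom(E,E)$. A locally geometric derived stack is an Artin $k$-stack near such a point precisely when $\Ext^{<-k+1}(E,E)$ vanishes; in particular, the open substack cut out by $\Ext^{<0}(E,E)=0$ is a classical Artin $1$-stack. Passing to the classical truncation $t_0$ and then to the connected component $\R\cM_n(\L)$ preserves this property, so the stack $\cM_n(\L)$ defined in the statement is an Artin stack in the classical sense.

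For the openness of $\Loc_n(\cL) \into \cM_n(\L)$, I would argue as follows. By Proposition \ref{prop:fullyfaithful}, extension by zero defines a fully faithful embedding $\loc(\cL) \into \muloc(\L)$, so for any $\cF,\cG \in \loc(\cL)$ the induced map $R\Hom_{\loc(\cL)}(\cF,\cG) \to R\Hom_{\muloc(\L)}(\cF,\cG)$ is a quasi-isomorphism. Interpreted on moduli, this says the induced morphism $\R Loc(\cL) \to \R\cM(\L)$ is an isomorphism on tangent complexes (hence formally étale) and is a monomorphism on isomorphism classes of points; a formally étale monomorphism between locally finite-type stacks is an open immersion. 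Restricting to the rank $n$ components and then to the classical truncations (in which $\Loc_n(\cL)$ lives by its definition, since rank $n$ local systems on a surface have $\Ext^{<0}=0$ automatically) yields the stated open immersion.

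The main obstacle is verifying that the obstruction theory of $\R\cM(\L)$ is genuinely controlled by $R\Hom$ in the homotopy limit category $\muloc(\L)$. The Toën--Vaquié machinery applies directly to each local $A_n$-quiver model in Section \ref{sec:KSseed}, but one must check that the cotangent complex of the homotopy limit of the corresponding moduli stacks is computed by the self-$\Ext$ complex in the glued dg category. This is a general fact about moduli of objects in homotopy limits of dg categories, but assembling it carefully from the local pieces, together with checking that the restriction to the rank $n$ component is open, constitutes the technical content of the argument.
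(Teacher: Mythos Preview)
Your argument is correct and matches the paper's approach: the Artin $1$-stack claim is exactly \cite[Sec.~3.4]{TV}, and openness follows from the full faithfulness of Proposition~\ref{prop:fullyfaithful} via the self-Ext description of tangent complexes (the paper packages this step as the preceding proposition plus the fact that truncation preserves \'etale maps, \cite[Sec.~2.2.4]{TV2}). Your final ``obstacle'' is not one in the paper's setup: $\R\cM(\L)$ is \emph{defined} (Definition~\ref{def:moduli}) as the homotopy limit of the local To\"en--Vaqui\'e moduli stacks, so its tangent complex is the self-Ext in $\muloc(\L)$ by construction.
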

\begin{proof}
That the locus in $t_0 \R \cM_n(\L)$ without negative self-extensions is an Artin 1-stack follows from \cite[Sec. 3.4]{TV}. That we have an open map of ordinary stacks follows from the fact that local systems on $\cL$ do not have negative self-extenstions, and that the truncation of an \'etale map is \'etale \cite[Sec. 2.2.4]{TV2}.
\end{proof}

%!TEX root = COS2.tex

\section{Mutation functors}\label{sec:mutationfunctors}

Given a skeletal surgery $\L \rightsquigarrow \L'$ at a disk $D_k$, we now construct an equivalence of categories $\Mut_k: \muloc(\L) \congto \muloc(\L')$. 
Just as the surgery $\L \rightsquigarrow \L'$ is local to the disk $D_k$, so too is the mutation functor $\Mut_k$. 

The mutation functor  should be the microlocal counterpart of the equivalence $\Fuk(W) \cong \Fuk(W')$ associated to the symplectomorphism $W \cong W'$ of Theorem \ref{thm:symplectomorphism}, under 
the expected equivalence $\muloc(\L) \cong \Fuk(W)$. 
We compute how 
$\loc(\cL) \subset \muloc(\L)$ transforms under surgery; our calculation is modeled on 
a computation in \cite{BezKap} of the effect of the Fourier transform on perverse sheaves. 
The result will be interpreted in Section \ref{sec:cluster} as showing this transformation to be a 
cluster $\cX$-transformation or a nonabelian version thereof. 

\subsection{Sheaf category equivalences from contact isotopy}

The basic tool in our construction of equivalences between categories of microlocal sheaves is the 
``sheaf quantization'' 
theorem of Guillermou, Kashiwara, and Schapira \cite{GKS}.  Informally, this asserts that a contact isotopy of
$T^\infty M$ induces an autoequivalence of the constructible sheaf category, respecting the action of
contact isotopy on the microsupport at infinity.  

\begin{remark} 
Under the equivalence between sheaf categories and Fukaya categories \cite{NZ, N1}, this amounts to a similar
statement about the {\em infinitesimally wrapped} Fukaya category, on which contact isotopy acts almost tautologically. 
Note that, unlike for the wrapped category, such an action is generally nontrivial.  
Compatibility of our construction with the (conjectural) equivalence 
$\muloc(\L) \cong Fuk(W)$ amounts to the assertion that the Nadler-Zaslow equivalence 
interwines the action of contact isotopy on $Fuk(W)$ with the Guillermou-Kashiwara-Schapira construction. 
\end{remark}

The authors of \cite{GKS} prefer the notion of homogenous symplectomorphism of the cotangent bundle minus the zero section --- i.e. a symplectomorphism which commutes with fiberwise rescaling --- to that of contactomorphism; these
are equivalent after e.g. choosing a metric and identifying $T^\infty M$ with the unit cosphere bundle.  In any case, 
such a symplectomorphism transforms one conic Lagrangian into another.  The main result of \cite{GKS} is that 
a 1-parameter family $\{\varphi_t| t\in [0,1]\}$ of homogeneous 
symplectomorphisms induces an equivalence of microlocal sheaf categories, respecting microsupports. 

\begin{theorem}\label{thm:GKSsec} \cite{GKS} 
Let $\{\varphi_t| t \in [0,1]\}$ be a 1-parameter family of homogeneous symplectomorphisms. 
For any $t$ and any conical Lagrangian $\eeT$ containing the zero section, there is a canonical equivalence $\sh_{\varphi_0(\eeT)}(M) \cong \muloc_{\varphi_t(\eeT)}(M)$. 
It is the restriction of an autoequivalence of $\sh(M)$ given by convolution with an explicit kernel $K_{\varphi_t}$.
\end{theorem}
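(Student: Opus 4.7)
The plan is to invoke the main construction of \cite{GKS} essentially as a black box, but I would outline its structure so the reader sees why the result is natural. The core idea is to quantize the Hamiltonian isotopy into a sheaf kernel on $M \times M$ depending on the parameter $t$, and then let convolution transport sheaves.

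First, I would set up the correct ambient geometry. Given the family $\{\varphi_t\}$ of homogeneous symplectomorphisms of $\dot T^* M = T^*M \setminus 0_M$, form the (conic) graph
\[
\Lambda_\Phi := \{(x,\xi,y,-\eta,t,\tau) : (x,\xi) = \varphi_t(y,\eta),\ \tau = -h_t(x,\xi)\} \subset \dot T^*(M \times M \times I),
\]
where $h_t$ is the (homogeneous of degree $1$) Hamiltonian generating the family. This is a conic Lagrangian submanifold, smoothly varying in $t$, which at $t=0$ specializes to the conormal of the diagonal shifted off the zero section.

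Next, the heart of \cite{GKS} is the construction of a unique (up to unique isomorphism) sheaf kernel $K \in \sh(M \times M \times I)$ with $SS(K) \subseteq \Lambda_\Phi \cup 0_{M \times M \times I}$ and $K|_{t=0} \simeq \coeffs_{\Delta_M}$. Uniqueness and existence come from a non-characteristic propagation argument: the projection $\Lambda_\Phi \to I$ has no critical points, so one propagates the initial condition along $t$ via the general machinery of \cite[Ch. 5--7]{KS}. I would then define $K_{\varphi_t} := K|_{M \times M \times \{t\}}$ and set
\[
\Phi_t(F) := K_{\varphi_t} \circ F := Rp_{1!}(K_{\varphi_t} \otimes p_2^* F),
\]
where $p_1, p_2$ are the projections from $M \times M$. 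The inverse is obtained by swapping the factors and using the family $\{\varphi_t^{-1}\}$; the composition isomorphism follows from the uniqueness of $K$ applied to concatenated isotopies.

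The main step, and what I expect is the real content, is the microsupport estimate: if $SS(F) \subseteq \eeT$, then $SS(\Phi_t(F)) \subseteq \varphi_t(\eeT)$ (away from the zero section), with equality when $F$ is supported in the appropriate locus. This is verified by the functorial microsupport estimates for proper direct image and tensor product \cite[Ch. 5]{KS}, applied to the composition of correspondences: the microsupport of a convolution lies in the composition of the microsupports of its factors, and this composition with $\Lambda_\Phi$ implements exactly the action of $\varphi_t$. Since $\Phi_0 = \mathrm{id}$, one concludes that $\Phi_t$ restricts to the desired equivalence $\sh_{\varphi_0(\eeT)}(M) \cong \sh_{\varphi_t(\eeT)}(M)$. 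The hypothesis that $\eeT$ contains the zero section ensures that the passage between $T^\infty M$ and $\dot T^*M$ causes no issue, since all the relevant kernels and sheaves have well-controlled behavior near $0_M$.
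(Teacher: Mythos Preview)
Your proposal is correct and matches the paper's treatment: the paper states this result as a direct citation of \cite{GKS} without giving a proof in the main text, and in the appendix (Theorem \ref{thm:GKS} and Corollary \ref{cor:GKSequivalence}) records exactly the ingredients you outline---the existence and uniqueness of the kernel $K_\Phi$, the inverse coming from the inverse isotopy, and the microsupport estimate under convolution. Your sketch is somewhat more detailed than what the paper itself provides, but the approach is the same.
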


As noted above, a Legendrian isotopy of the boundary of $\eeT$ extends to an isotopy of homogeneous 
symplectomorphisms.  Recall that $\mutghat^\infty$ is, by Definition \ref{def:mutghat}, 
the result of applying such as isotopy  to $\ghat^\infty$; recall that 
$\ghat$ and  $\mutghat$ as the defined respectively as the union of the zero section and the cones
over $\ghat^\infty$ and $\mutghat^\infty$.  Thus:  

\begin{corollary}
\label{cor:bytuene-mershe-ant-averil}
The defining flow $F_t$ induces an equivalence $\sh_{\ghat}(\R^2) \congto \sh_{\mutghat}(\R^2)$. 
\end{corollary}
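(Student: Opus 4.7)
The plan is to realize the transformation $\ghat \rightsquigarrow \mutghat$ as the action of a $1$-parameter family of homogeneous symplectomorphisms of $T^*\R^2 \setminus 0$, and then apply Theorem \ref{thm:GKSsec}. The subtle point is that $F_t$ itself, as defined in \eqref{eq:damp_flow}, is a flow on $T^\infty \R^2$ that is not a contactomorphism on the whole space: pulling back the standard contact form picks up an extra term proportional to $df$, which is only zero where $f$ is locally constant. So I would not apply GKS to $F_t$ directly, but instead replace it by a genuine homogeneous Hamiltonian flow on $T^*\R^2 \setminus 0$ that agrees with $F_t$ on the region where it matters.

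Explicitly, let $|p|$ denote the Euclidean norm on the fibres of $T^*\R^2$, and consider the Hamiltonian $H = f \cdot |p|$, with $f$ the damping function from \eqref{eq:damp_flow}. Since $f$ depends only on the base and $|p|$ is homogeneous of degree $1$ in the fibres, $H$ is homogeneous of degree $1$ on $T^*\R^2 \setminus 0$, so its Hamiltonian flow $\varphi_t$ is a family of homogeneous symplectomorphisms. On any open set where $f \equiv 1$, Hamilton's equations for $H$ reduce to $\dot{p}_x = \dot{p}_y = 0$ together with $\dot{x} = p_x/|p|$, $\dot{y} = p_y/|p|$; restricted to the cosphere bundle with $p_x = \cos\theta$, $p_y = \sin\theta$, this is exactly the flow $F_t$. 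Choosing the cutoff so that $f \equiv 1$ on a disk $U$ large enough to contain the front projection of $\ghat^\infty$ together with all its $F_t$-orbits for $t \in [0,2]$, we obtain $\varphi_t(\ghat) = F_t(\ghat)$ on the cosphere bundle for all such $t$; in particular $\varphi_2(\ghat) = \mutghat$ by Definition \ref{def:mutghat}.

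The conclusion then follows by applying Theorem \ref{thm:GKSsec} to the homogeneous symplectic isotopy $\varphi_t$, $t \in [0,2]$, with $\eeT := \ghat$ (which contains the zero section by construction), yielding a canonical equivalence
\[
\sh_{\ghat}(\R^2) \congto \sh_{\varphi_2(\ghat)}(\R^2) = \sh_{\mutghat}(\R^2).
\]

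The one feature that might look like an obstacle, but is really not, is the singular behaviour at $t = 1$: the base projection of $F_t(\ghat^\infty)$ degenerates at this moment, as the inwardly co-oriented unit circle $\gamma_k$ contracts through the origin. However, $\varphi_t$ itself remains a smooth homogeneous symplectomorphism of $T^*\R^2 \setminus 0$ throughout, and Theorem \ref{thm:GKSsec} operates at the level of ambient symplectomorphisms rather than at the level of the Lagrangian's front projection. The Lagrangian $\varphi_t(\ghat)$ simply acquires, and then sheds, the full cotangent fibre over the origin as it passes through this critical moment, with no bearing on the applicability of the theorem.
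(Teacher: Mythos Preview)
Your proof is correct and follows the paper's approach: the paper simply observes that the Legendrian isotopy at infinity extends to an isotopy of homogeneous symplectomorphisms and then invokes Theorem~\ref{thm:GKSsec}, while you make this explicit by exhibiting the Hamiltonian $H = f|p|$ that generates such an isotopy. Your care in noting that $F_t$ itself is not a contactomorphism where $df \neq 0$, and in explaining why the degeneration of the front projection at $t=1$ does not obstruct the application of GKS, goes beyond what the paper spells out.
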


\begin{proposition} \label{prop:hatsheafmutation}
There is a commutative diagram: 
$$
\begin{CD}
 \muloc(\ghat) @>\cong>> \muloc(\mutghat) 
\\
@VVV @VVV  \\
 \muloc(\partial \ghat) @>\cong>> \muloc(\partial \mutghat) 
\end{CD}
$$
\end{proposition}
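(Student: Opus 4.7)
My plan is to produce the top arrow directly from Corollary \ref{cor:bytuene-mershe-ant-averil}, to observe that the bottom arrow is essentially tautological because $F_t$ has compact support, and then to deduce commutativity from the fact that the GKS sheaf quantization is built from a convolution kernel that is supported near the graph of the symplectomorphism.

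\textbf{Step 1 (top arrow).} Both $\ghat$ and $\mutghat$ are conical Lagrangians in $T^*\R^2$ containing the zero section, so by the general identification of Section \ref{sec:locallyconic} we have $\muloc(\ghat) \cong \sh_{\ghat}(\R^2)$ and $\muloc(\mutghat) \cong \sh_{\mutghat}(\R^2)$. The top arrow is then the equivalence of Corollary \ref{cor:bytuene-mershe-ant-averil}, i.e. the GKS sheaf quantization associated to the homogeneous symplectic lift of the contact isotopy of $F_t$ from Equation \eqref{eq:damp_flow}.

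\textbf{Step 2 (bottom arrow).} The damping function $f$ was chosen to vanish outside a large disk, so if the sphere $S^3$ defining $\partial\ghat$ is taken large enough, $F_t$ is the identity on a neighborhood of $S^3$. Therefore $\ghat$ and $\mutghat$ literally coincide as subsets of $T^*\R^2$ on this neighborhood, giving a tautological identification $\partial\ghat = \partial\mutghat$ and hence a tautological equivalence $\muloc(\partial\ghat) = \muloc(\partial\mutghat)$. Here I interpret $\muloc$ of a boundary in the usual sheafy sense, as the sections of $\muloc$ over a small open collar neighborhood.

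\textbf{Step 3 (commutativity).} The GKS equivalence is realized by convolution with a kernel $K_{\varphi_t}$ which, where the symplectomorphism is the identity, reduces to the identity kernel $\coeffs_\Delta$ concentrated on the diagonal, so that convolution becomes canonically isomorphic to the identity functor. Combined with the naturality of the construction under restriction to conic open subsets of $T^*\R^2$ (this is already built into \cite{GKS}, since the kernel is supported on the graph of the symplectomorphism), restricting the top-row equivalence along $\partial\ghat \subset \ghat$ agrees with the GKS functor obtained from the trivial isotopy on the boundary neighborhood, which is the identity. This gives the desired commutativity.

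The only step that requires real care is the naturality in Step 3, ensuring that the global GKS equivalence really does restrict on the neighborhood of $\partial\ghat$ to the GKS equivalence associated to the (trivial) restricted isotopy; this is a consequence of the compatibility of convolution with open restriction, built into the kernel formalism of \cite{GKS}.
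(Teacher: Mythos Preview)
There is a genuine gap in Step~2. The sphere $S^3$ lives in $T^*\R^2 \cong \R^4$, and the homogeneous symplectic lift of $F_t$ is \emph{never} compactly supported in $T^*\R^2$: the damping $f$ kills the flow only outside a large disk in the \emph{base} $\R^2$, while the homogeneous extension scales freely in the fiber direction. Concretely, the boundary $\partial\ghat$ has two kinds of pieces. The pieces lying over large $|x|$ (the zero section circle and the far ends of the segments $\gamma_p$) are indeed fixed, and your argument applies there --- this is exactly the sentence ``there is nothing to check at the boundary at infinity in $\R^2$'' in the paper's proof. But $\partial\ghat$ also contains the part of the cone over $\gamma_k$ (and the near ends of the $\gamma_p$) that meets $S^3$ at large $|\xi|$ over base points with $|x|\approx 1$. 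Here $f\equiv 1$, the isotopy acts nontrivially, and in fact $\partial\ghat$ and $\partial\mutghat$ are \emph{different} subsets of $S^3$: by definition $\mutghat^\infty = F_2(\ghat^\infty) \neq \ghat^\infty$. So the claim that ``$\ghat$ and $\mutghat$ literally coincide near $S^3$'' is false, and the bottom arrow cannot be tautological.

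What is needed for this contact-infinity part is the statement that convolution with the GKS kernel respects microlocalization away from the zero section --- i.e.\ induces an isomorphism of Kashiwara--Schapira sheaves, not merely of the global categories $\sh_\bullet(\R^2)$. This is the content the paper invokes via \cite[Thm.~7.2.1]{KS}, which gives criteria on a kernel for the associated transform to be a microlocal equivalence; one checks these hold for $K_{\varphi_t}$. Your Step~3 gestures at ``naturality under restriction to conic open subsets,'' but that naturality is not automatic from the kernel formalism alone (the restriction of a convolution is not the convolution of restrictions in general); it is precisely what \cite[Thm.~7.2.1]{KS} establishes once its hypotheses are verified for the GKS kernel.
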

\begin{proof}
There is nothing to check at the boundary at infinity in $\R^2$: since the contact isotopy 
was compactly supported, it does not change the sheaf categories here.  As for the existence
of the isomorphism at contact infinity, 
this is ultimately because contact transformations respect microlocalization.  More precisely,
it follows by checking the criteria of \cite[Thm 7.2.1]{KS} for the kernel constructed in \cite{GKS}
that the above morphism induces, away from the zero section, an isomorphism of Kashiwara-Schapira
sheaves. 
\end{proof}

\begin{remark}
Note however that the above isomorphism {\em does not} respect the ``Morse trivialization'' 
procedure of Lemma \ref{lem:mumon}, even when it is defined (i.e., when $C_k$ is the only
curve).  This is because at one point during the isotopy 
--- when $C_k$ is collapsed to a point --- the front projection of the Legendrian 
will fail to be an immersion.  In fact the result is that there is a cohomological shift in degree
with respect to this trivialization.  A proper discussion of this requires the ``inertia index'' 
(aka Maslov index) of \cite{KS}.  We do not carry this out here, because we can absorb the 
resulting ambiguity in a canonical way. 
\end{remark}

\subsection{Construction of the mutation functor}
We now use the local isomorphism $\muloc(\ghat) \cong \muloc(\mutghat)$ to build an equivalence $\muloc(\L) \cong \muloc(\L')$. 

Fix once more the data of a curve configuration $\cC$ on a surface $\cL$, with  $\L \subset W$ the associated Lagrangian skeleton and 4-manifold.  We let $\cC'$, $\cL'$, and $\L'$ denote their counterparts under mutation at 
a fixed embedded curve $C_k \in \cC$. 

Preserving the notation of Section \ref{sec:geometry}, we write $\cL = \cL_{(k)} \cup \cL^{(k)}$ for
an open cover consisting of a neighborhood $\cL_{(k)}$ of $C_k$ and a complementary open set. 
Similarly,  $\L = \L_{(k)} \cup \L^{(k)}$. 
We also somewhat abusively write $\partial \L_{(k)}$ for the intersection $\L_{(k)} \cap \L^{(k)} \subset \L$. 
This retracts onto the evident interpretation of $\partial \L_{(k)}$, which is two circles 
(the inside and outside translates of $C_k$ on $\cL$) connected by several 
segments (from other disks and half-disks whose boundaries meet $C_k$).

Recall however  ${\cL_{(k)}}'$  and $\cL^{(k)} {}'$
 {\em do not} mean a neighborhood of $C'_k$ and its complement, but instead for the parts of 
 $\cL'$ which are the images of $\cL_{(k)}$ and $\cL^{(k)}$ under the fixed identification 
 $\cL \cong \cL'$ used in defining the mutation of curve configurations.   In particular, recall that 
 the restricted curve configuration ${\cC_{(k)}}'$ will generally contain intersections amongst the curves
 ending on the boundary of ${\cL_{(k)}}'$, whereas a neighborhood of $C'_k$ would not.

The canonical identification $\L^{(k)} \cong {\L^{(k)}}'$ induces a canonical equivalence 
$\muloc(\L^{(k)}) \cong \muloc({\L^{(k)}}')$, compatible with restriction to $\partial \L_{(k)}
\cong \partial {\L_{(k)}}'$. 
Thus to construct $\Mut_k$ it suffices to construct an 
equivalence $\muloc(\L_{(k)}) \cong \muloc({\L_{(k)}}')$, respecting the restriction to the boundary. 

We already have such an equivalence for the conical models $\ghat$ and $\mutghat$.  
It remains to construct  equivalences $\muloc(\L_{(k)}) \congto \muloc(\ghat)$. 
One should expect such an equivalence, since one expects both these categories to model the Fukaya categories of $B^4$ whose objects have the same prescribed asymptotics in $S^3$.  
On the sheaf side, we construct the equivalence just by computing directly. 

\begin{theorem} \label{thm:equiv}
There is an equivalence $\Mut_k: \muloc(\L_{(k)}) \congto \muloc({\L_{(k)}}')$ such that the following diagram commutes up to isomorphism:
\[
\begin{CD} 
\muloc(\L_{(k)})  @>>>  \muloc( \partial \L_{(k)})  \\
@VVV @VVV\\
\muloc({\L_{(k)}}')  @>>> \muloc( \partial {\L_{(k)}'})
\end{CD}
\]
There is a global equivalence $\Mut_k: \muloc(\L) \congto \muloc(\L')$ that is intertwined with this local equivalence by the restriction functors $\muloc(\L) \to \muloc(\L_{(k)})$, $\muloc(\L') \to \muloc({\L_{(k)}}')$. 
\end{theorem}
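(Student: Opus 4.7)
The plan is to construct the local equivalence $\Mut_k \colon \muloc(\L_{(k)}) \congto \muloc({\L_{(k)}}')$ by threading it through the conical models $\ghat$ and $\mutghat$, and then to globalize by gluing to the identity on the complementary region $\L^{(k)} \cong {\L^{(k)}}'$.

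First I would establish equivalences $\muloc(\L_{(k)}) \cong \muloc(\ghat)$ and $\muloc({\L_{(k)}}') \cong \muloc(\mutghat)$. The remark after Proposition \ref{prop:whatisghat} asserts that, although $\L_{(k)}$ and $\ghat$ are not diffeomorphic, they are related by a noncharacteristic deformation in the sense of \cite{N4}, and similarly for ${\L_{(k)}}'$ and $\mutghat$. Invariance of microlocal sheaf categories under noncharacteristic deformation (essentially the microlocal Morse lemma of \cite{KS}) then promotes these deformations to equivalences of the Kashiwara-Schapira sheaves, hence of their global sections. These deformations can be arranged to fix a collar of the boundary and to respect the identifications $\partial\L_{(k)} \cong \partial\ghat$ and $\partial{\L_{(k)}}' \cong \partial\mutghat$ supplied by Proposition \ref{prop:whatisghat}.

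Next I would compose with the equivalence $\muloc(\ghat) \cong \muloc(\mutghat)$ produced by Proposition \ref{prop:hatsheafmutation} from the compactly supported contact isotopy $F_t$ of Section \ref{subsec:con-mod}. Because $F_t$ is the identity outside a compact set, it fixes the boundary at infinity and the induced equivalence commutes (up to canonical isomorphism) with restriction to $\muloc(\partial\ghat) \cong \muloc(\partial\mutghat)$. Splicing the three equivalences yields the local mutation functor, and commutativity of the local boundary square follows from the individual boundary compatibilities. To pass to the global statement, I would invoke the sheaf property of $\muloc$: we have open covers $\L = \L_{(k)} \cup \L^{(k)}$ and $\L' = {\L_{(k)}}' \cup {\L^{(k)}}'$ whose overlaps retract onto $\partial \L_{(k)} \cong \partial {\L_{(k)}}'$, and on $\L^{(k)}$ the identification with ${\L^{(k)}}'$ is tautological. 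Since the local functor agrees with the tautological one after restriction to the common boundary, the descent data for $\muloc$ give a homotopy limit description of both $\muloc(\L)$ and $\muloc(\L')$, and the two local equivalences glue to a global equivalence $\Mut_k \colon \muloc(\L) \congto \muloc(\L')$ intertwined with the restriction functors as required.

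The main obstacle I anticipate is book-keeping the Kashiwara-Schapira descent data at the boundary. The gluing of Definition \ref{def:KSsheaf} uses the sign twist $\sigma \colon \cC^\circ \to \Z/2\Z$ on each closed curve, and one must check that the signs attached to $C_k$ in $\L$ and to $C'_k$ in $\L'$ are consistent with the natural monodromies produced by the GKS kernel. A related subtlety noted in the remark after Proposition \ref{prop:hatsheafmutation} is that the Morse trivialization of Lemma \ref{lem:mumon} is not preserved by $F_t$, since the front of $\ghat^\infty$ ceases to be an immersion mid-isotopy; the resulting Maslov shift must be absorbed either into $\sigma$ or, by Proposition \ref{prop:sigmambiguity}, into the ambiguity in the class of $\sigma$ in $H^2(\L; \Z/2\Z)$. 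Once these cocycle-level checks are in place, the rest of the proof is a formal gluing argument for sheaves of dg categories.
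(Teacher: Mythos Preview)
Your overall architecture matches the paper's: factor through the conical models $\ghat$ and $\mutghat$, use Proposition~\ref{prop:hatsheafmutation} for the middle equivalence, and globalize by descent over the cover $\L = \L_{(k)} \cup \L^{(k)}$. The substantive difference is in how you produce the outer equivalences $\muloc(\L_{(k)}) \cong \muloc(\ghat)$ and $\muloc({\L_{(k)}}') \cong \muloc(\mutghat)$. You invoke noncharacteristic deformation in the sense of \cite{N4}, citing the remark after Proposition~\ref{prop:whatisghat}; but note that the paper only asserts the existence of such deformations in that remark and does not prove or use it. Instead, the paper builds these equivalences by hand: it writes each side as a homotopy fiber product over the disk (e.g.\ $\muloc(\ghat) = \muloc(\ghat \smallsetminus D^\circ) \times_{\loc(D \smallsetminus D^\circ)} \loc(D)$ and similarly for $\L_{(k)}$), and then matches the pieces using the explicit $A_2$/$A_3$ quiver descriptions of the local categories, with the equivalence $\muloc(\TCk) \cong \muloc(\ghat \smallsetminus D^\circ)$ given sector-by-sector by reflection functors. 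This buys two things your route does not obviously supply: first, boundary compatibility is manifest rather than something you must extract from a deformation argument fixing a collar; second, the explicit quiver-level description is exactly what is exploited downstream in Sections~\ref{sec:L0} and~\ref{sec:localsystems} to compute the effect of $\Mut_k$ on local systems. Your approach is not wrong, but it outsources the key step to a result the paper deliberately avoids relying on, and you would still owe a check that the noncharacteristic-deformation equivalence respects restriction to $\partial\L_{(k)}$ on the nose (not just up to some uncontrolled autoequivalence). Your remarks on the Maslov shift and the sign $\sigma$ are apt; the paper handles the shift by absorbing it into the identification on the $\mutghat$ side and later into the choice of homeomorphism $\cL \cong \cL'$ in Theorem~\ref{prop:clustertrans}.
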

\begin{proof}

We extract this local equivalence, together with the commutative diagram, from a larger diagram passing through the conical models $\ghat$, $\mutghat$:
\begin{equation}\label{eq:localmut}
\begin{CD}
\muloc(\L_{(k)}) @>\cong>> \muloc(\ghat) @>\cong>> \muloc(\mutghat) @>\cong>>
\muloc({\L_{(k)}}') 
\\
@VVV @VVV @VVV @VVV  \\
\muloc(\partial \L_{(k)}) @>\cong>> \muloc(\partial \ghat) @>\cong>> \muloc(\partial \mutghat) @>\cong>>
\muloc( {\partial \L_{(k)}}') 
\end{CD}
\end{equation}

While all the sheaves of categories have been indifferently denoted $\muloc$, the above identifications
are by no means tautological: each category is defined in terms of the specified conical Lagrangian.  

We already saw the central square in Proposition \ref{prop:hatsheafmutation}. 

To see the leftmost commutative square, let $D$ be the open unit disk and $D^\circ$ an open disk whose closure is contained in $D$. Recall that the front projection of $\ghat^\infty$ contains the boundary of $D$ but does not meet $D$ itself. 
By the sheaf axiom, we have
$$\muloc(\ghat) = \muloc(\ghat \setminus D^\circ) \times_{\loc(D\smallsetminus D^\circ)} \loc(D).$$ 

On the other hand, by definition
$$\muloc(\L_{(k)}) = \muloc(\TCk) \times_{\loc(D_k \smallsetminus D^\circ_k)}
\loc(D_k),$$
where $D^\circ_k$ is the core of the handle attachment above $C_k$.

We can thus specify an equivalence $\muloc(\ghat)\cong \muloc(\L_{(k)})$ by providing vertical equivalences in the following diagram, together with natural isomorphisms of functors making the left and right squares commute:
\begin{equation}
\begin{CD}
\muloc(\ghat \setminus D^\circ) @>>>  \loc(D\smallsetminus D^\circ) @<<< \loc(D)  \\
@VVV @VVV @VVV \\
\muloc(\TCk) @>>> \loc(D_k \smallsetminus D^\circ_k)  @<<< \loc(D_k) 
\end{CD}
\end{equation}

The equivalences on the middle and right, and the corresponding commuting isomorphism, are the natural ones from the homeomorphism of $D_k$ with the unit disk (recall we fixed before a homeomorphism $\ghat \cong \L_{(k)}$).  The equivalence on the left locally relates different presentations of the microlocal sheaf categories of the $A_2$ and $A_3$ arboreal singularities \cite{N3}. That is, $\TCk$ can be covered by radial sectors that either only meet $C_k$ or that meet $C_k$ and one other curve. Following Examples \ref{ex:A2} and \ref{ex:A3} the restriction of $\muloc$ to such a sector is equivalent to $\coeffs A_2\dmod$ or $\coeffs A_3\dmod$, respectively. A similar statement holds for $\ghat \setminus D^\circ$, and the equivalence $\muloc(\TCk) \cong \muloc(\ghat \setminus D^\circ)$ is locally given by reflection functors.

That these reflection functors glue together into a global equivalence follows from the absence of monodromy of the KS sheaves $\muloc|_{\TCk}$ and $\muloc|_{\ghat \setminus D^\circ}$. 
The commuting isomorphism can then be described locally in terms of quivers: in the $A_2$ case, for example, there is a natural isomorphism between the stalk of representation at a vertex before applying a reflection and the cone over the defining map of the representation after applying it. 
The rightmost square in Equation \ref{eq:localmut} is obtained similarly; note however that the Morse trivialization of $\muloc(\mutghat^\infty)$ differs from the natural trivializations of $\muloc(\ghat^\infty)$ and $\partial {\L_{(k)}}'$ by a cohomological degree shift. 

With the local equivalence in hand, we define a global equivalence $\muloc(\L) \congto \muloc(\L')$ by providing the vertical equivalences in the diagram below, along with isomorphisms making the left and right squares commute:
\begin{equation}\label{eq:localequiv}
\begin{CD}
\muloc(\L^{(k)}) @>>> \muloc( \partial \L_{(k)}) @<<< \muloc(\L_{(k)})  \\
@VVV @VVV @VVV \\
\muloc({\L^{(k)}}') @>>> \muloc( \partial {\L_{(k)}}')  @<<< \muloc({\L_{(k)}}') 
\end{CD}
\end{equation}
By construction $\muloc(\L^{(k)})$, $\muloc(\partial \L_{(k)})$ are canonically equivalent to their primed counterparts, in a way intertwining the restriction maps between them up to a canonical isomorphism. But we have just finished constructing a commuting square on the left. 
\end{proof}

\begin{remark}
Commutativity of a diagram of functors between categories means appropriate choices
of natural transformations at the squares.  We have given such above.  Note however that the entire
construction can be twisted by an autoequivalence of the identity of $\muloc(\partial {\L_{(k)}}')$. 
\end{remark}

\begin{remark}
The choice of $\sigma: \cC \to \Z/2\Z$ made in defining $\muloc$ is immaterial in the preceding theorem. While different choices may change the global category $\muloc(\L)$, the restriction of $\muloc$ to $\L_{(k)}$ is always the same
by Proposition \ref{prop:sigmambiguity}, 
since $\L_{(k)}$ is contractible. 
\end{remark}

\subsection{A disk glued to a cylinder}\label{sec:L0} 
In this section, we study in detail the category of microlocal sheaves on the skeleton 
$\L_0$  obtained from Definition \ref{def:W} by taking $\cL$ to be an annulus and $\cC$ to consist of a single noncontractible curve $C$. 

Let $A_2$ be the quiver $\bullet \to \bullet$.  We write
\begin{eqnarray*}
\mathfrak{c}: \coeffs A_2 \dmod & \to & \coeffs\dmod \\
A\to B & \mapsto & \mathrm{Cone}(A \to B) 
\end{eqnarray*}
for the functor that maps a representation the cone over its defining map. 

Recall that an isomorphism in a dg category is, by definition, a closed degree zero map, invertible in the homotopy
category.   Note we can describe $\loc(S^1)$ as  pairs $X \in \coeffs \dmod$ and an isomorphism $m: X \to X$. 
In general, we denote a pair consisting of an object $X$ and an isomorphism $m: X \to X$ as $X \lcirc m$.

\begin{proposition} \label{prop:examplecalc}
Up to equivalence, the objects of $\muloc(\L_0)$ are tuples $\{(X,m,y)\}$, where 
\begin{itemize}
\item $X \in A_2\dmod$
\item $m: X \to X$ is an isomorphism
\item $y: \mathfrak{c}(X) \to \mathfrak{c}(X)$ is a degree -1 map;  $dy = 1-(-1)^k \mathfrak{c}(m)$ on degree $k$ elements of $\mathfrak{c}(X)$. 
\end{itemize}
\end{proposition}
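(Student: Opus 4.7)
The plan is to identify $\muloc(\L_0)$ with a concrete homotopy fiber product via Definition \ref{def:KSsheaf}. The skeleton $\L_0$ is the union of the conic Lagrangian $\TC \subset T^*\cL$ with the attached disk $D$, glued along an annular neighborhood of the Legendrian lift of $C$, which retracts onto a circle. The gluing data in Definition \ref{def:KSsheaf} then presents $\muloc(\L_0)$ as
\begin{equation*}
\muloc(\L_0) \;\simeq\; \muloc(\TC) \times_{\loc(S^1)} \loc(D).
\end{equation*}

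Next I would identify each factor concretely. A radial slice of $\cL$ crossing $C$ falls under Example \ref{ex:A2}, so $\muloc$ along $\TC$ is locally equivalent to $\coeffs A_2 \dmod$; going around the core circle of the annulus $\cL$, this globalises to the category of pairs $(X, m)$ with $X \in \coeffs A_2 \dmod$ and $m \colon X \xrightarrow{\sim} X$. Since $D$ is contractible, $\loc(D) \simeq \coeffs \dmod$, and $\loc(S^1)$ is the category of pairs $(V, n)$ with $n \colon V \xrightarrow{\sim} V$. The restriction $\loc(D) \to \loc(S^1)$ sends $W \mapsto (W, \mathrm{id}_W)$, since a local system extending across the disk has trivial boundary monodromy. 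The restriction $\muloc(\TC) \to \loc(S^1)$ is computed via the Morse trivialization of Lemma \ref{lem:mumon}: the transverse microlocal stalk of an $A_2$-representation $X = (A \to B)$ is $\mathfrak{c}(X) = \mathrm{Cone}(A \to B)$, and $m$ induces $\mathfrak{c}(m)$ as the monodromy, up to the cone-grading sign conventions discussed below.

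Finally, I would unwind the homotopy fiber product. Its objects are tuples $((X, m),\, W,\, \phi)$ where $\phi$ is an equivalence in $\loc(S^1)$ between $(\mathfrak{c}(X), \mathfrak{c}(m))$ and $(W, \mathrm{id}_W)$. Since $W$ is unconstrained, after passing to an equivalent representative one may set $W = \mathfrak{c}(X)$ and take $\phi$ to be the identity on the underlying complex; the residual datum of $\phi$ being an equivalence in $\loc(S^1)$ then reduces to a nullhomotopy of the difference $\mathrm{id} - \mathfrak{c}(m)$, namely a degree $-1$ map $y \colon \mathfrak{c}(X) \to \mathfrak{c}(X)$ with $dy = 1 - (-1)^k \mathfrak{c}(m)$ on degree $k$ elements. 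The main bookkeeping obstacle, and the source of the Koszul sign $(-1)^k$, is the cohomological grading shift inherent both in the cone $\mathrm{Cone}(A \to B)$ and in the Morse trivialization --- the same shift foreshadowed in the proof of Theorem \ref{thm:equiv}. A careful proof requires selecting a strict model for $\loc(S^1)$, for instance one built from a two-chart Cech cover of $S^1$ or a twisted-complexes presentation, in which the homotopy pullback acquires this explicit description and the signs are captured on the nose.
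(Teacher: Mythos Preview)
Your proposal is correct and follows essentially the same approach as the paper: present $\muloc(\L_0)$ as the homotopy fiber product $\muloc(\TC) \times_{\loc(S^1)} \loc(D)$, identify each factor and restriction map as you do, and then simplify by setting $W = \mathfrak{c}(X)$ with the underlying identification equal to the identity. The one place the paper is more concrete than your sketch is in computing the homotopy pullback: rather than appealing to a \v{C}ech or twisted-complex model for $\loc(S^1)$, it replaces $\loc(S^1)$ by its path object $\cP(\loc(S^1))$ in Tabuada's model structure and takes an ordinary limit, which makes the degree~$-1$ datum $y$ and the sign $(-1)^k$ fall out directly from the differential on the path category rather than from bookkeeping about the Morse trivialization.
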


\begin{proof}
The starting point for the calculation is the defining decomposition
$\L_0 = T^+_C\cL \cup  D$
into local conical models.    By definition $\mu loc(\L_0)$ is the homotopy pullback
\[
\begin{tikzpicture}
\node (tl) at (0,0) {$\mu loc(\L_0)$}; \node (tr) at (4.5,0) {$\loc(D)$};
\node (bl) at (0,-1.5) {$\mu loc(T^+_C\cL)$}; \node (br) at (4.5,-1.5) {$\mu loc( T^+_C\cL \cap  D)$};
\draw[->] (tl) to (tr); \draw[->] (tl) to (bl); \draw[->] (bl) to (br); \draw[->] (tr) to (br);
\end{tikzpicture}
\]

Let us first describe combinatorially the spaces whose homotopy fiber product we plan to take. Lemma 
\ref{lem:mumon} 
identifies $\mu loc( T^+_C\cL \cap  D) \cong \loc( T^+_C\cL \cap  D) \cong \loc(S^1)$.  Identifying $ T^+_C\cL \cap  D$ with $\partial D$ defines the right hand map as the restriction of a local system on $D$ to its boundary.  

As $T^+_C\cL$ is a circle times an $A_2$ arboreal singularity, we can describe $\mu loc(T^+_C\cL)$ in terms of quiver representations \cite{N3}.  An object of $\mu loc(T^+_C\cL)$ is an object $X$ of $A_2\dmod$ equipped with an
isomorphism. 

In particular $\mathfrak{c}:A_2\dmod \to \coeffs\dmod$ extends to a functor $\mu loc(T^+_C\cL) \to \loc( S^1)$ in a straightforward way. 

In general, a homotopy fiber product  $\alpha \stackbin[\beta]{h}{\times} \gamma$ in a model category can be computed by replacing $\beta$ with a
path space and taking an ordinary pullback.  In Tabuada's model structure for dg categories \cite{Tab}, 
the path space $\cP(T)$ of a dg-category $T$ is a dg-category whose objects are isomorphisms in $T$ that are invertible up to homotopy, a morphism between two such maps being another pair of maps intertwining them up to a chosen homotopy.  More precisely,
\[
\Hom_{\cP(T)}(X\xrightarrow{f} Y, W \xrightarrow{g} Z) = \Hom_T(X,W) \oplus \Hom_T(Y,Z) \oplus \Hom_T(X,Z)[-1].
\]
Thus a degree $k$ map from $f$ to $g$ is given by a triple $m_1 \in \Hom^k_T(X,W)$, $m_2 \in \Hom^{k}_T(Y,Z)$, $h \in \Hom^{k-1}_T(X,Z)$.  The differential $d_{\cP(T)}$ is defined by 
\[
d_{\cP(T)} \begin{pmatrix} m_1 & 0 \\ h & m_2 \end{pmatrix} = \begin{pmatrix} d_Tm_1 & 0 \\ d_Th + g \circ m_1 - (-1)^k m_2 \circ f & d_tm_2 \end{pmatrix}.
\]

Thus we replace $\loc(S^1)$ with its path space, and
compute $\mu loc(\L_0)$ as the ordinary limit of the diagram
\begin{equation}\label{eq:pathlimit}
\begin{tikzpicture}
\node (bl) at (1,-1) {$\loc(S^1)$}; \node (br) at (3,-1) {$\loc(S^1)$};
\node (tl) at (0,0) {$\mu loc(T^+_C\cL)$}; \node (tr) at (4,0) {$\loc(D)$};
\node (tm) at (2,0) {$\cP(\loc(S^1))$};
\draw[->] (tl) to (bl); \draw[->] (tm) to (bl); \draw[->] (tm) to (br); \draw[->] (tr) to (br);
\end{tikzpicture}
\end{equation}
Thus an object of $\mu loc(\L_0)$ is an object of $\mu loc(T^+_C\cL)$, which we represent as a pair 
$X \lcirc m$;  an object $Y$ of $\loc(D) \cong \coeffs\dmod$ for $X \in A_2 \dmod$, and an isomorphism 
$$y: \mathfrak{c}(X \lcirc m) \to Y \lcirc 1 \in \loc(S^1)$$

The morphisms in $\mu loc(\L_0)$ can similarly be computed from the above diagram and the description of 
morphisms in $\cP(\loc(S^1))$.

We can simplify this description of $\mu loc(\L_0)$ further. 
An isomorphism $f: (M \lcirc m) \to (N \lcirc n)$  in $\loc(S^1)$ can be expressed in terms of $\coeffs$-module maps: 
a degree zero morphism 
$f_0: M \to N$ and a degree -1 morphism $f_{-1}: M \to N$ such that $df_{-1} = f_0 m \pm n f_0$. 

In particular, the above map $y: \mathfrak{c}(X \lcirc m) \to Y \lcirc 1$ decomposes as a pair $y_0$, $y_{-1}$ of degree zero and degree -1 maps from $\mathfrak{c}(X)$ to $Y$ in $\coeffs\dmod$.  The above stated condition that $y$ is an
isomorphism amounts to $dy_{-1} = y_0 \circ (1 - (-1)^k C(m))$ on the degree $k$ part of $C(X)$.  

But we may consider a full subcategory where $Y$ is $C(X)$ itself and $y_0$ is the identity.  The inclusion of this subcategory is  an equivalence, leading to the description in the claim above.
\end{proof}

Recall that we write $\muLoc(\L)$ for the full subcategory of $\muloc(\L)$ whose objects, when restricted to 
$\sh_{\TC}(\cL)$, have cohomology concentrated in degree zero. 

\begin{figure}
\includegraphics{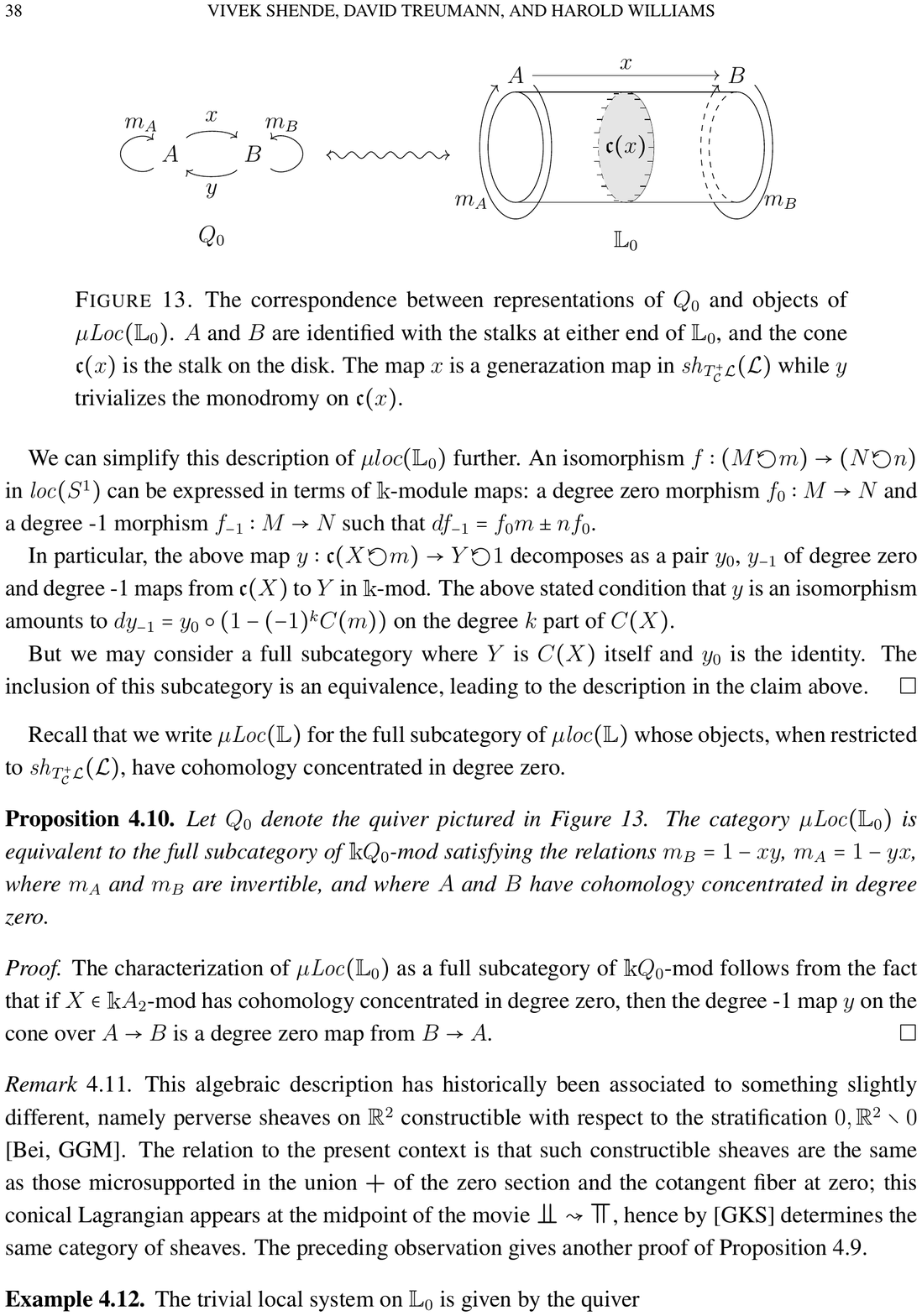}
\caption{The correspondence between representations of $Q_{0}$ and objects of $\muLoc(\L_0)$. $A$ and $B$ are identified with the stalks at either end of $\L_0$, and the cone $\mathfrak{c}(x)$ is the stalk on the disk. The map $x$ is a generazation map in $\sh_{\TC}(\cL)$ while $y$ trivializes the monodromy on $\mathfrak{c}(x)$.}
\label{fig:L0}
\end{figure}

\begin{proposition}
Let $Q_0$ denote the quiver pictured in Figure \ref{fig:L0}.   The category $\muLoc(\L_0)$ is equivalent to the full subcategory of $\coeffs Q_0\dmod$ satisfying the relations $m_B= 1-xy$, $m_A = 1-yx$, where $m_A$ and $m_B$ are invertible, and where $A$ and $B$ have cohomology concentrated in degree zero.
\end{proposition}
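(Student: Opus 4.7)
The plan is to prove this by unpacking Proposition \ref{prop:examplecalc} in the case where the underlying $A_2$-representation $X$ has cohomology concentrated in degree zero, and matching the resulting data with a representation of $Q_0$ satisfying the stated relations.

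First I would observe that, by definition, $\muLoc(\L_0)$ is the preimage of degree-zero objects of $\sh_{\TC}(\cL) = \muloc(\TCk)$, and the forgetful functor sends $(X,m,y)$ to the $\muloc(\TCk)$-object represented by $X \lcirc m$. Thus objects of $\muLoc(\L_0)$ are obtained by restricting the description of Proposition \ref{prop:examplecalc} to those $X \in \coeffs A_2\dmod$ whose cohomology is concentrated in degree zero. Up to quasi-isomorphism any such object can be represented by a strict $A_2$-representation $x\colon A\to B$ with both $A$ and $B$ placed in cohomological degree zero, and $m$ by a pair of $\coeffs$-linear maps $m_A\colon A\to A$, $m_B\colon B\to B$ with $m_B x = x m_A$, each of which is invertible (since $m$ is an isomorphism in the homotopy category, in this strict setting it can be promoted to an invertible chain-level map).

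Next I would compute $\mathfrak{c}(X)$ and the data of $y$. For $X = (x\colon A\to B)$ with $A,B$ in degree zero, $\mathfrak{c}(X) = \mathrm{Cone}(x)$ is concentrated in degrees $-1$ and $0$: $\mathfrak{c}(X)^{-1} = A$, $\mathfrak{c}(X)^{0} = B$, with differential $x$. A degree-$(-1)$ self-map $y$ of $\mathfrak{c}(X)$ then has a single nonzero component, a $\coeffs$-linear map $y\colon B \to A$, since the other possible component $A\to \mathfrak{c}(X)^{-2} = 0$ is forced to vanish. A direct Leibniz-rule calculation gives $(dy)|_B = xy\colon B\to B$ and $(dy)|_A = yx\colon A\to A$. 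The condition $dy = 1 - (-1)^k \mathfrak{c}(m)$ on the degree-$k$ part thus becomes $xy = 1 - m_B$ in degree $0$ and $yx = 1 - m_A$ in degree $-1$ (up to the sign choice of Proposition \ref{prop:examplecalc} in identifying $\mathfrak{c}(m)$ on $\mathfrak{c}(X)$ with $m_A \oplus m_B$, which we adjust to obtain the signs stated here). Rearranging, we get exactly the relations $m_B = 1 - xy$ and $m_A = 1 - yx$; invertibility of $m_A, m_B$ is built into the data of $m$ being an isomorphism. Conversely, any representation of $Q_0$ satisfying the relations with invertible $m_A, m_B$ yields such a tuple $(X, m, y)$, giving a bijection on objects.

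I would then establish the correspondence on morphisms by unpacking the hom complex computed in the proof of Proposition \ref{prop:examplecalc} from the pullback diagram (\ref{eq:pathlimit}). A degree-zero closed morphism between two objects $(X, m, y)$ and $(X', m', y')$ with $X,X'$ concentrated in degree zero consists of: a map of $A_2$-representations $(f_A\colon A\to A', f_B\colon B\to B')$ intertwining $x,x'$; together with the induced compatibility with $m, m'$ from the $\muloc(\TCk)$ factor (so $f_A m_A = m_A' f_A$, $f_B m_B = m_B' f_B$); and a compatibility from the path-space factor which forces $f_A y = y' f_B$ up to a homotopy term that vanishes in the degree-zero subcategory. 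These are precisely the morphism conditions for maps of representations of $Q_0$, so the functor is fully faithful.

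The main obstacle will be a careful check of the signs in the relations $m_B = 1-xy$ and $m_A = 1-yx$ and a careful unpacking of the morphism compatibility from $\cP(\loc(S^1))$ to confirm that no additional homotopy data survives in the full subcategory. In particular, one must verify that passing from the full path-space description to the subcategory where $Y = \mathfrak{c}(X)$ and $y_0 = 1$ remains an equivalence on the nose after restricting to cohomologically degree-zero objects, so that one genuinely gets the ordinary (non-dg) category of $Q_0$-representations cut out by these relations, rather than an $A_\infty$-enhancement of it.
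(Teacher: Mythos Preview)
Your proposal is correct and takes essentially the same approach as the paper: the paper's proof is a single-sentence observation that when $X$ has cohomology in degree zero, a degree $-1$ self-map of $\mathfrak{c}(X)$ is the same as an ordinary map $y\colon B \to A$, and you have simply unpacked this in detail (including the computation of $dy$ and the morphism-level check, neither of which the paper writes out). Your caveats about signs and about residual homotopy data are appropriate but do not reflect any divergence in strategy.
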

\begin{proof}
The characterization of $\muLoc(\L_0)$ as a full subcategory of $\coeffs Q_0\dmod$ follows from the fact that if $X \in \coeffs A_2\dmod$ has cohomology concentrated in degree zero, then the degree -1 map $y$ on the cone over $A \to B$ is a degree zero map from $B \to A$. 
\end{proof}

\begin{remark} This algebraic description 
has historically been associated to something slightly different, namely perverse sheaves on $\R^2$
constructible with respect to the stratification $0, \R^2 \setminus 0$ \cite{Bei, GGM}. 
The relation to the present
context is that such constructible sheaves are the same as those microsupported in the union
$\bigplus$ 
of the zero section and the cotangent fiber at zero; this conical Lagrangian appears at the midpoint
of the movie $\ghat \rightsquigarrow \mutghat$, hence by \cite{GKS} determines the same category of 
sheaves.  The preceding observation gives another proof of Proposition \ref{prop:examplecalc}. 
\end{remark}

\begin{example}
The trivial local system on $\L_0$ is given by the quiver 

\begin{center}
\begin{tikzpicture}
\newcommand*{\len}{1.5};
\node (A) at (0,0) {$\coeffs$}; \node (B) at (\len,0) {$\coeffs$};
\draw (A) .. controls (\len*.3,\len*.3) and (\len*.7,\len*.3) .. (B) [->,shorten <=0mm,shorten >=0mm];
\draw (A) .. controls (\len*.3,-\len*.3) and (\len*.7,-\len*.3) .. (B) [<-];
\draw (A) to[out=220,in=-90] ($(A)+(-\len*.6,0)$) to[out=90,in=140] (A) [->];
\draw (B) to[out=-40,in=-90] ($(B)+(\len*.6,0)$) to[out=90,in=40] (B) [->];
\node at ($(A)+(-\len*.35,\len*.35)$) {$1$};
\node at ($(B)+(\len*.35,\len*.35)$) {$1$};
\node at (\len*.5,\len*.45) {$1$};
\node at (\len*.5,-\len*.45) {$0$};
\end{tikzpicture}
\end{center}
\end{example}

We now assume $\coeffs$ is an algebraically closed field, and recall 
some classical facts about the representation theory of this quiver. 

\begin{lemma} \label{lem:simples}
The full subcategory of $\coeffs Q_0 \dmod$ on objects such that $m_A = 1-xy$ and $m_B = 1-yx$
and both of these are invertible is generated by the objects $S^1_A$, $S^1_B$, and $P^m_A$ of Figure \ref{fig:quiverreps}. Up to a shift they are the only simple objects in the category.
\end{lemma}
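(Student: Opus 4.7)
The plan is to analyze the classification of simple objects first, then leverage a composition-series argument for generation. Since the category is cut out of $\coeffs Q_0\dmod$ by the condition that cohomology is concentrated in degree zero, it is an honest abelian subcategory of finite-dimensional $\coeffs Q_0$-representations satisfying the relations, and ``simple'' has its usual abelian sense.

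I would begin by checking that $S^1_A$, $S^1_B$, and $P^m_A$ are in fact simple: each has the minimal possible data at its support vertices and no proper invariant subspace decomposition is compatible with both arrows of the quiver. For $S^1_A$ the relation $m_A = 1 - yx$ forces $m_A = 1$ since the $B$-side is zero, and symmetrically for $S^1_B$; for $P^m_A$ the relation $xy = yx = 1-m$ with $x,y$ nonzero scalars in dimension one means there is no nontrivial proper subrepresentation.

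Next I would show that any simple is one of these. Let $(A,B,x,y)$ be simple. By Schur's lemma applied to the endomorphisms $m_A, m_B$ of a simple, $m_A = a\cdot 1_A$ and $m_B = b\cdot 1_B$ for invertible scalars $a,b$. The identity $x(1-yx) = (1-xy)x$ gives $xm_A = m_B x$, hence $(a-b)x = 0$, and similarly $(a-b)y = 0$. Split into cases:
\begin{itemize}
\item If $a \neq b$ then $x = y = 0$, so $A$ and $B$ are independent subrepresentations. Simplicity forces one of them to vanish and the other to be one-dimensional; the relations then force $m_A = 1$ or $m_B = 1$, yielding $S^1_A$ or $S^1_B$.
\item If $a = b = m$ then $yx = (1-m)\,1_A$ and $xy = (1-m)\,1_B$. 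When $m \neq 1$, both $x$ and $y$ are isomorphisms, so $\dim A = \dim B$; moreover any subspace $A' \subset A$ yields a subrepresentation via $B' := x(A')$ (using that $y(B') = (1-m)A' = A'$), hence simplicity forces $\dim A = \dim B = 1$ and the data reduces, up to rescaling, to $P^m_A$. When $m = 1$, we have $xy = 0$ and $yx = 0$, so $\ker(x) \subset A$ is a subrepresentation whose image under $y$ sits inside $\ker(x)$; a brief case analysis on whether $\ker(x)$ is $0$ or all of $A$ (and similarly for $\ker(y)$) shows that any such simple is again $S^1_A$ or $S^1_B$.
\end{itemize}

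For the generation statement, the relations together with invertibility of $m_A, m_B$ cut out a subcategory closed under subobjects, quotients, and extensions in the category of finite-dimensional $\coeffs Q_0$-representations. Finite length then gives every object a composition series whose factors are the simples classified above; iterated extensions realize any object in the full triangulated subcategory generated by $S^1_A$, $S^1_B$, and the family $P^m_A$. The main subtlety I expect is the case $a = b = m = 1$, where $x$ and $y$ are nilpotent rather than invertible and one has to rule out additional simple objects by hand; everything else reduces to Schur-type observations.
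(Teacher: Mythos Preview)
Your argument is correct and follows a genuinely different route from the paper.  The paper does not classify simples first; instead it shows directly that every nonzero representation in the subcategory contains one of $S^1_A$, $S^1_B$, $P^m_A$ as a subobject, by passing to an eigenspace of $yx$ on $A$, then to an eigenspace of $xy$ on its image under $x$, and reading off which simple appears.  Both the classification of simples and the generation statement then fall out by induction on dimension.  Your approach inverts this: Schur pins down the simples, and a composition-series argument gives generation.  The paper's method is more hands-on and avoids singling out the eigenvalue $m=1$; yours is more structural but pushes the work into that degenerate case, which you correctly flag as the one needing care.

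One point to tighten: Schur's lemma does not apply to $m_A$ and $m_B$ separately, since neither alone extends to an endomorphism of the representation.  What is true is that the \emph{pair} $(m_A, m_B)$ is an endomorphism, precisely because of the identity $m_B x = x m_A$ you write down; over algebraically closed $\coeffs$ this already forces $a = b$.  So your case $a \neq b$ is vacuous rather than a genuine branch --- harmless, since you rederive $a = b$ anyway, but worth stating cleanly.  For the $m=1$ case your sketch is right; one clean way to finish is to observe that $(\operatorname{im} y, B)$ and $(A, \operatorname{im} x)$ are both subrepresentations, which on a simple forces $x$ and $y$ to both vanish.
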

\begin{proof}
We reproduce the classical argument. 
It suffices to show that any representation has one of these as a subrepresentation.  If $yx \ne 0$, then 
choose an eigenspace $V$; we can do this since $\coeffs$ is an algebraically closed field.  
Evidently $V$ is also an eigenspace of $m_A = 1-yx$.  If $xV = 0$, then $(V, 0)$ determines
a subrepresentation on which  $m_A$ acts as the identity; it is thus a sum of
$S^1_A$'s.  

Otherwise observe $(xy)xV = x(yx)V \subset xV$.  Choose
an eigenspace $W \subset xV$ for $xy$.  If $yW = 0$, then $(0, W)$ determines a subrepresentation
on which $m_B$ acts as the identity; it is a sum of $S^1_B$'s.  

If not, then observe $xy W \subset W$, hence $(yW, W)$ is a subrepresentation, on which $yx$ and $xy$,
hence $m_A$ and $m_B$, act as scalars.  We have $y (1-m_B) W = yxyW = (1-m_A) yW$, so 
$m_A = m_B:= m$.  By assumption moreover $xy$ and $yx$ were not zero, so $m \ne 1$ and thus
$xy$ and $yx$ are nonzero scalars, hence $x$ and $y$, are invertible.  Finally, by an appropriate choice of basis, 
we can demand that $x$ is represented by an identity matrix, and $y$ by the scalar $1-m$. 
\end{proof}

\subsection{Mutation of local systems}\label{sec:localsystems} 

We want to understand the comparison 
$$loc(\cL) \subset \muloc(\L) \cong \muloc(\L') \supset loc(\cL')$$ 
induced by $\Mut_k$. 
As we have argued, because the inclusion $loc(\cL) \subset \muloc(\L)$ is fully faithful, it induces an open inclusion
of moduli spaces.  To understand an open inclusion, it suffices to understand the geometric points. 
With this in mind, we assume in this section that $\coeffs$ is an algebraically closed field. 

It suffices to work with a subcategory containing both $loc(\cL)$ and its image under mutation at $k$.  
Writing temporarily $\widetilde{\L}$ for the skeleton built from the curve collection consisting only of $C_k$, 
i.e. as a topological space $\widetilde{\L} = \cL \cup D_k$, observe that, by the local nature of the construction
of the mutation functor, the inclusion $\muloc(\widetilde{\L}) \subset \muloc(\L)$ of Proposition \ref{prop:inclusion}
intertwines mutation functors.  
In particular, the image of $loc(\cL) \subset \muloc(\widetilde{\L}) \subset  \muloc(\L)$ remains, after mutation,
inside $\muloc(\widetilde{\L})$.  It thus suffices to compute inside $\muloc(\widetilde{\L})$.  
In short, we may assume without loss of generality that the curve collection only had a single curve to begin with.
Henceforth we do this, and hence cease to distinguish between $\widetilde{\L}$ and $\L$. 

Note that our assumption that there was only a single curve amounts to $\L^{(k)} = \cL^{(k)}$. By the sheaf axiom
$
\muloc(\L) = \muloc(\L_{(k)}) \stackbin[\muloc(\partial \L_{(k)})]{h}{\times} loc(\cL^{(k)})
$. 
Thus  $\Mut_k: \muloc(\L) \cong \muloc(\L')$  is determined by
the following commutative diagram, as in Equation \ref{eq:localequiv}:

\begin{equation}\label{eq:localequivsubk}
\begin{CD}
\muloc(\cL^{(k)}) @>>> \muloc( \partial \L_{(k)}) @<<< \muloc(\L_{(k)})  \\
@VVV @VVV @VVV \\
\muloc({\cL^{(k)}}') @>>> \muloc( \partial {\L_{(k)}}')  @<<< \muloc({\L_{(k)}}') 
\end{CD}
\end{equation}

Let $\L_0$ be the cylinder-with-disk of Section \ref{sec:L0}.  
Fix a homeomorphism $\L_0 \cong \cL_{(k)} \cup D_k \subset \L$, compatible with the co-orientation of $C_k$. By construction this induces $\muloc(\L_{(k)}) \cong \muloc(\L_0)$. We fix a similar homeomorphism $\L_0 \cong {\cL_{(k)}}' \cup D'_k \subset \L'$ and equivalence $\muloc({\L_{(k)}}') \cong \muloc(\L_0)$. Note, however, that the homeomorphism $\cL_{(k)} \cong {\cL_{(k)}}'$ arising from the comparison $\cL_{(k)} \subset \L_0 \supset {\cL_{(k)}}'$ is \emph{not} the one corresponding to the homeomorphism $\cL \cong \cL'$ which we used to define the mutated curve configuration. 
Instead, since the co-orientations of $C_k$, $C'_k$ are opposite, the composition of these is an 
automorphism of $\cL_{(k)}$ that exchanges the two components of $\partial \cL_{(k)}$.

\begin{figure}
\centering
\begin{tikzpicture}
\newcommand*{\len}{1.5};
\node [matrix] (SA) at (0,0) {
\node (A) at (0,0) {$\coeffs$}; \node (B) at (\len,0) {$0$};
\draw (A) .. controls (\len*.3,\len*.3) and (\len*.7,\len*.3) .. (B) [->,shorten <=0mm,shorten >=0mm];
\draw (A) .. controls (\len*.3,-\len*.3) and (\len*.7,-\len*.3) .. (B) [<-];
\draw (A) to[out=220,in=-90] ($(A)+(-\len*.6,0)$) to[out=90,in=140] (A) [->];
\draw (B) to[out=-40,in=-90] ($(B)+(\len*.6,0)$) to[out=90,in=40] (B) [->];
\node at ($(A)+(-\len*.35,\len*.35)$) {$1$};
\node at (\len*.5,\len*.45) {$$};
\node at (\len*.5,-\len*.45) {$$};
\node at (-1.7,0) {$S_A^1$};
\node at (-1.2,0) {$=$};
\\
};
\node [matrix] (SB) at (0,-2) {
\node (A) at (0,0) {$0$}; \node (B) at (\len,0) {$\coeffs$};
\draw (A) .. controls (\len*.3,\len*.3) and (\len*.7,\len*.3) .. (B) [->,shorten <=0mm,shorten >=0mm];
\draw (A) .. controls (\len*.3,-\len*.3) and (\len*.7,-\len*.3) .. (B) [<-];
\draw (A) to[out=220,in=-90] ($(A)+(-\len*.6,0)$) to[out=90,in=140] (A) [->];
\draw (B) to[out=-40,in=-90] ($(B)+(\len*.6,0)$) to[out=90,in=40] (B) [->];
\node at ($(B)+(\len*.35,\len*.35)$) {$1$};
\node at (\len*.5,\len*.45) {$$};
\node at (\len*.5,-\len*.45) {$$};
\node at (-1.7,0) {$S_B^1$};
\node at (-1.2,0) {$=$};
\\
};
\node [matrix] (PA) at (5.2,0) {
\node (A) at (0,0) {$\coeffs$}; \node (B) at (\len,0) {$\coeffs$};
\draw (A) .. controls (\len*.3,\len*.3) and (\len*.7,\len*.3) .. (B) [->,shorten <=0mm,shorten >=0mm];
\draw (A) .. controls (\len*.3,-\len*.3) and (\len*.7,-\len*.3) .. (B) [<-];
\draw (A) to[out=220,in=-90] ($(A)+(-\len*.6,0)$) to[out=90,in=140] (A) [->];
\draw (B) to[out=-40,in=-90] ($(B)+(\len*.6,0)$) to[out=90,in=40] (B) [->];
\node at ($(A)+(-\len*.35,\len*.35)$) {$m$};
\node at ($(B)+(\len*.35,\len*.35)$) {$m$};
\node at (\len*.5,\len*.45) {$1$};
\node at (\len*.5,-\len*.45) {$1-m$};
\node at (-1.7,0) {$P_A^m$};
\node at (-1.2,0) {$=$};
\\
};
\node [matrix] (PB) at (5.2,-2) {
\node (A) at (0,0) {$\coeffs$}; \node (B) at (\len,0) {$\coeffs$};
\draw (A) .. controls (\len*.3,\len*.3) and (\len*.7,\len*.3) .. (B) [->,shorten <=0mm,shorten >=0mm];
\draw (A) .. controls (\len*.3,-\len*.3) and (\len*.7,-\len*.3) .. (B) [<-];
\draw (A) to[out=220,in=-90] ($(A)+(-\len*.6,0)$) to[out=90,in=140] (A) [->];
\draw (B) to[out=-40,in=-90] ($(B)+(\len*.6,0)$) to[out=90,in=40] (B) [->];
\node at ($(A)+(-\len*.35,\len*.35)$) {$m$};
\node at ($(B)+(\len*.35,\len*.35)$) {$m$};
\node at (\len*.5,\len*.45) {$1-m$};
\node at (\len*.5,-\len*.45) {$1$};
\node at (-1.7,0) {$P_B^m$};
\node at (-1.2,0) {$=$};
\\
};
\node [matrix] (PiA) at (10.4,0) {
\node (A) at (0,0) {$\coeffs \Z$}; \node (B) at (\len,0) {$\coeffs \Z$};
\draw (A) .. controls (\len*.3,\len*.3) and (\len*.7,\len*.3) .. (B) [->,shorten <=0mm,shorten >=0mm];
\draw (A) .. controls (\len*.3,-\len*.3) and (\len*.7,-\len*.3) .. (B) [<-];
\draw (A) to[out=220,in=-90] ($(A)+(-\len*.6,0)$) to[out=90,in=140] (A) [->];
\draw (B) to[out=-40,in=-90] ($(B)+(\len*.6,0)$) to[out=90,in=40] (B) [->];
\node at ($(A)+(-\len*.35,\len*.35)$) {$t$};
\node at ($(B)+(\len*.35,\len*.35)$) {$t$};
\node at (\len*.5,\len*.45) {$1$};
\node at (\len*.5,-\len*.45) {$1-t$};
\node at (-1.7,0) {$\Pi_A$};
\node at (-1.2,0) {$=$};
\\
};
\node [matrix] (PiB) at (10.4,-2) {
\node (A) at (0,0) {$\coeffs \Z$}; \node (B) at (\len,0) {$\coeffs \Z$};
\draw (A) .. controls (\len*.3,\len*.3) and (\len*.7,\len*.3) .. (B) [->,shorten <=0mm,shorten >=0mm];
\draw (A) .. controls (\len*.3,-\len*.3) and (\len*.7,-\len*.3) .. (B) [<-];
\draw (A) to[out=220,in=-90] ($(A)+(-\len*.6,0)$) to[out=90,in=140] (A) [->];
\draw (B) to[out=-40,in=-90] ($(B)+(\len*.6,0)$) to[out=90,in=40] (B) [->];
\node at ($(A)+(-\len*.35,\len*.35)$) {$t$};
\node at ($(B)+(\len*.35,\len*.35)$) {$t$};
\node at (\len*.5,\len*.45) {$1-t$};
\node at (\len*.5,-\len*.45) {$1$};
\node at (-1.7,0) {$\Pi_B$};
\node at (-1.2,0) {$=$};
\\
};
\end{tikzpicture}
\caption{Notation for various objects of $\muloc(\L_0) \subset \coeffs Q_0\dmod$. The objects $P^m_A$ and $P^m_B$ are isomorphic for $m\neq 1$.}\label{fig:quiverreps}
\end{figure}

We now leverage our algebraic understanding of $\muloc(\L_0)$ from  Section \ref{sec:L0}. 
The following proposition is a close analogue of \cite[Prop 4.5]{BezKap}, 
which derives the same formula for the action of the Fourier transform on perverse sheaves. 

\begin{proposition}\label{prop:Mutcharacterization} 
The composition $\muloc(\L_0) \cong \muloc(\L_{(k)}) \cong \muloc({\L_{(k)}}') \cong \muloc(\L_0)$ 
interchanges $S_A^1$ and $S_B^1$, and interchanges $P^m_A \to P^{1/m}_B$. 
\end{proposition}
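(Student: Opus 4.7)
The plan is to reduce to the computation of the Fourier--Sato transform on perverse sheaves carried out in \cite[Prop.~4.5]{BezKap}, exploiting the intermediate conical model consisting of the union of the zero section with the cotangent fibre over the origin in $T^*\R^2$, which appears halfway through the Legendrian isotopy from $\ghat$ to $\mutghat$. As noted in the remark after Proposition \ref{prop:examplecalc}, sheaves microsupported there coincide with sheaves on $\R^2$ constructible for the stratification $\{0\}\subset \R^2$, whose abelian heart is exactly the category of perverse sheaves treated in \cite{BezKap}. Lemma \ref{lem:simples} then reduces the proposition to an identification of the composite functor on simples: up to shift, the isomorphism classes of simples in the heart of $\muloc(\L_0)$ are exhausted by $S_A^1$, $S_B^1$, and the family $\{P_A^m\}_{m\ne 1}$.

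The first step is to match the algebraic description of $\muloc(\L_0)$ from Proposition \ref{prop:examplecalc} with the quiver description of perverse sheaves in \cite{BezKap}. Under this dictionary the stalks $A$, $B$ correspond to the stalks on the two half-planes of $\R^2$ cut by a generic co-oriented line through $0$, and the arrows $x$, $y$ correspond to the canonical and variation maps, with the relations $m_A = 1-yx$, $m_B = 1-xy$ reproducing Beilinson's gluing formula. The second step is to show that the GKS equivalence $\muloc(\ghat)\cong\muloc(\mutghat)$ of Corollary \ref{cor:bytuene-mershe-ant-averil}, restricted to the heart, realises the Fourier--Sato transform. This follows from the explicit form of the GKS kernel for a homogeneous Hamiltonian isotopy factoring through the midpoint model, together with the fact that the Fourier--Sato kernel is characterised up to isomorphism by the homogeneous symplectomorphism it covers. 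The third step is to observe that the outer tautological identifications $\muloc(\L_0)\cong\muloc(\L_{(k)})$ and $\muloc({\L_{(k)}}')\cong\muloc(\L_0)$ compose, thanks to the reversal of the co-orientation of $C_k$, to an automorphism of $\L_0$ that swaps the two ends of the cylinder, inducing the swap $A\leftrightarrow B$ on the quiver data.

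Combining these three ingredients, the composite functor acts on the heart as (swap of cylinder ends) $\circ$ (Fourier--Sato). By \cite[Prop.~4.5]{BezKap} Fourier--Sato exchanges the two rank-one simple perverse sheaves supported on opposite half-planes, and inverts the monodromy parameter on the nearby-cycle family $\{P^m\}$; composing with the swap $A\leftrightarrow B$ then yields $S_A^1\leftrightarrow S_B^1$ and $P_A^m\mapsto P_B^{1/m}$, as claimed. The main obstacle is not conceptual but one of bookkeeping: one has to match conventions for co-orientations, the sign choices in the Beilinson gluing data, and the cohomological shift in the Morse trivialisation pointed out in the remark after Proposition \ref{prop:hatsheafmutation}, and track them cleanly across the chain \eqref{eq:localequivsubk}. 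None of these affect the isomorphism class of simple one lands on, but keeping careful account of them is what makes the explicit formula $P_A^m \mapsto P_B^{1/m}$, rather than some unspecified reparametrisation thereof, fall out.
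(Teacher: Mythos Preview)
Your approach diverges substantially from the paper's, and the central step of your reduction is not justified as written.

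The paper does \emph{not} reduce to \cite[Prop.~4.5]{BezKap}; it merely notes the analogy.  Its argument is much shorter: any equivalence permutes simple objects, and by Lemma~\ref{lem:simples} the simples are exhausted (up to shift) by $S_A^1$, $S_B^1$, and $P_A^m$ for $m\ne 1$.  Each simple is determined by its restriction to $\partial\L_{(k)}$, i.e.\ by the pair $(A\lcirc m_A,\,B\lcirc m_B)$.  The mutation functor respects restriction to the boundary by the commuting square of Theorem~\ref{thm:equiv}, and the composite effect on boundary data is entirely dictated by the discussion immediately preceding the proposition: the two identifications of $\L_0$ with $\L_{(k)}$ and ${\L_{(k)}}'$ use opposite co-orientations of $C_k$, so their composite swaps the two boundary circles and reverses the orientation convention for monodromy.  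This forces $S_A^1\leftrightarrow S_B^1$ and $P_A^m\mapsto P_B^{1/m}$.  No kernel computation, no Fourier--Sato.

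Your Step~3 is exactly this co-orientation/swap observation, and it is correct.  But you then layer on top of it a claim (your Step~2) that the GKS equivalence for the flow $F_t$ \emph{is} the Fourier--Sato transform under the midpoint identification.  Your justification---``the Fourier--Sato kernel is characterised up to isomorphism by the homogeneous symplectomorphism it covers''---is not a valid principle: the GKS kernel of Theorem~\ref{thm:GKS} is determined by the whole isotopy, not by its time-$t$ endpoint, and distinct isotopies sharing endpoints can yield inequivalent kernels.  Establishing the identification with Fourier--Sato would require an explicit comparison of kernels (or of their microsupports together with a normalization), which you have not supplied.  The paper sidesteps this entirely: once you know simples are determined by their boundary restrictions, your Step~3 alone finishes the proof, and Steps~1--2 are superfluous.
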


\begin{proof}
This equivalence must permute simples (characterized in Lemma \ref{lem:simples}); these in turn are characterized by $A \lcirc m_A$ and $B \lcirc m_B$.  But
these monodromies at the boundary are determined by the above discussion of how $\cL_{(k)}$ and ${{\cL}_{(k)}}'$ include into
$\L_0$. 
\end{proof}

\begin{proposition}
Assume $\cE \in loc(\cL) \subset \muloc(\L)$.  Then the mutation
$\cE':=  \Mut_k(\cE) \in \muloc(\L')$ will in fact lie in $loc(\cL')$ if and only if the monodromy around $C_k$
does have 1 as an eigenvalue. 
\end{proposition}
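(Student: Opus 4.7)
The plan is to reduce the question to an explicit algebraic computation in the local model $\muloc(\L_0)$ of Section \ref{sec:L0}, and then apply Proposition \ref{prop:Mutcharacterization} directly.

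First I would exploit the locality of $\Mut_k$. The construction in the proof of Theorem \ref{thm:equiv} factors through a neighborhood of the disk $D_k$, and the condition $\cE' \in \loc(\cL')$ is likewise local: the only disk whose stalk can have changed between $\L$ and $\L'$ is $D_k$ (replaced by $D'_k$), and away from this disk the canonical identification $\L^{(k)} \cong {\L^{(k)}}'$ carries the restriction of $\cE$ to that of $\cE'$. As in the proof of Proposition \ref{prop:Mutcharacterization}, one is reduced to the single-curve case where the skeleton is the cylinder-with-disk $\L_0$ and $\muloc(\L_0)$ is described by representations of the quiver $Q_0$ from Section \ref{sec:L0}. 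By Proposition \ref{prop:fullyfaithful}, the subcategory $\loc(\cL) \subset \muloc(\L_0)$ consists of representations $(A, B, x, y)$ for which $\mathrm{Cone}(x)$ vanishes, i.e., with $x$ a quasi-isomorphism. A local system of monodromy $m$ around $C_k$ is then presented by $(V, V, \mathrm{id}_V, 1 - m)$, so that $m_A = m_B = m$.

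The main step is to apply Proposition \ref{prop:Mutcharacterization}: the mutation interchanges the generating objects $S^1_A \leftrightarrow S^1_B$ and $P^\mu_A \leftrightarrow P^{1/\mu}_B$ listed in Figure \ref{fig:quiverreps}. Decomposing $\cE$ according to generalized eigenspaces of its monodromy (invoking Lemma \ref{lem:simples} for the building blocks), one can compute $\Mut_k(\cE)$ block by block. Re-expressing the result under the identification $\muloc({\L_{(k)}}') \cong \muloc(\L_0)$ produces an explicit pair $(x', y')$ that is algebraic in $m$. Under this presentation, the condition $\cE' \in \loc(\cL')$ again reduces to invertibility of the new generization map $x'$.

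The final step is the algebraic comparison: reading off the formula, the invertibility of $x'$ becomes an eigenvalue condition on $m$. The main obstacle is bookkeeping: the several identifications $\muloc(\L_{(k)}) \cong \muloc(\ghat) \cong \muloc(\mutghat) \cong \muloc({\L_{(k)}}')$ from Equation \eqref{eq:localmut} must be tracked carefully, in particular the cohomological degree shift noted in the proof of Theorem \ref{thm:equiv} that arises from the moment in the isotopy of Definition \ref{def:mutghat} at which the front projection of $\ghat^\infty$ fails to be an immersion. These conventions dictate the precise form of the eigenvalue condition and so determine that $\cE'$ is a local system on $\cL'$ exactly when the monodromy of $\cE$ around $C_k$ has $1$ as an eigenvalue.
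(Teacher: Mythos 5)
Your proposal is correct in method and essentially identical to the paper's own proof: reduce to the local model $\L_0$ of a single curve, identify $\loc(\cL)$ inside $\muloc(\L_0)$ with the objects built from the $P^m_A$ (equivalently, invertible generization map $x$), and apply Proposition \ref{prop:Mutcharacterization} eigenblock by eigenblock. The one caution is that the final step you defer to ``bookkeeping'' is the entire content of the argument --- namely that $P^m_A \mapsto P^{1/m}_B$, that $P^{1/m}_B \cong P^{1/m}_A$ precisely when $m \neq 1$, and that $P^1_B$ is not isomorphic to any $P^\mu_A$ --- and carrying it out shows that $\cE'$ lies in $\loc(\cL')$ if and only if $1$ is \emph{not} an eigenvalue of the monodromy around $C_k$; the ``does have'' in the statement, which your closing sentence echoes without computing, appears to be a misprint, as is confirmed by the requirement that $\Id - \cE_{C_k}$ be invertible in Theorem \ref{prop:clustertrans} and in the definition of the rank-$n$ cluster $\cX$-transformation.
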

\begin{proof}
It's enough to check the local case, for $\cL_{(k)}$.  An object lies in $loc(\cL_{(k)})$ 
if and only if it's in the subcategory spanned by the $P^m_A$.  Under mutation, these are sent to 
the $P^{1/m}_B$, which are isomorphic to $P^{1/m}_A$, except when $m=1$.  
$P^1_B$ is not isomorphic to any of the $P^m_A$. 
\end{proof}

Recall that we write $Loc(\cL)$ for local systems in the ordinary sense, i.e., those which have cohomology sheaves
only in degree zero. 
To describe the action of $\Mut_k$ on $\Loc(\cL) \subset \muloc(\L)$, we write an object of $\Loc(\cL)$ as a representation of the fundamental groupoid of $\cL$ on a fixed $\coeffs$-module. That is, it is a $\coeffs$-module $\cE$ together with an endomorphism $\cE_\gamma$ for every path $\gamma: [0,1] \to \cL$, compatible with concatenation of paths. We write $\cE_{C_k}$ for the holonomy around $C_k$, oriented so that its co-orientation points rightward.

\begin{theorem}\label{prop:clustertrans}
There exists an identification of $\cL$ with $\cL'$ such that, whenever $\cE$ and $\cE' = Mut_k(\cE)$ are both local systems, 
\begin{itemize}
\item If $\gamma$ does not meet $C'_k$ then $\cE'_\gamma = \cE_\gamma$.
\item Suppose $\gamma$ crosses $C'_k$ exactly once, and that its tangent at the crossing pairs negatively with the co-orientation of $C'_k$. Let $\gamma_{<C'_k}$ be the subpath that starts at $\gamma(0)$ and ends at the crossing, $\gamma_{> C'_k}$ the subpath which starts at the crossing and ends at $\gamma(1)$. Then
\[
\cE'_\gamma = \cE_{\gamma_{>C'_k}}(\Id - \:\cE_{C_k})\cE_{\gamma_{< C'_k}}.
\]
\end{itemize}
The identification $\cL \cong \cL'$ here differs from the identification used to define $\cC'$ by some fixed universal number of Dehn twists
about $C_k$. 
\end{theorem}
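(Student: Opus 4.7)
The plan is to extract the formula from the local computation in $\muloc(\L_0)$ of Section \ref{sec:L0}, using Proposition \ref{prop:Mutcharacterization} as the key input. The first bullet is essentially the statement that $\Mut_k$ is a local operation near $D_k$: by construction, see the square in Equation \ref{eq:localequiv}, the identification $\cL^{(k)} \cong {\cL^{(k)}}'$ intertwines the restriction functors $\muloc(\L) \to \muloc(\L^{(k)})$ and $\muloc(\L') \to \muloc({\L^{(k)}}')$, so for any path $\gamma$ whose image lies in $\cL^{(k)}$ the holonomies of $\cE$ and $\cE'$ agree. For the second bullet I decompose any path $\gamma$ crossing $C'_k$ exactly once as $\gamma_{>C'_k} \cdot \delta \cdot \gamma_{<C'_k}$, where $\delta$ is a short path lying entirely in a small neighborhood of the crossing; composition with the locality statement reduces the formula to the assertion that $\cE'_\delta = \Id - \cE_{C_k}$ relative to natural trivializations at the endpoints of $\delta$.

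To verify this local claim I work inside $\muloc(\L_0)$, the cylinder-with-disk category analyzed in Section \ref{sec:L0}. Under the equivalence $\muloc(\L_{(k)}) \cong \muloc(\L_0)$, a rank $n$ local system $\cE$ with holonomy $\mu = \cE_{C_k}$ on a fixed $\coeffs$-module $V$ is represented by a quiver representation of $Q_0$ with $A = B = V$, $x = \Id$, $y = \Id - \mu$, and $m_A = m_B = \mu$, the natural higher-rank analogue of $P_A^\mu$. Since the composition $\muloc(\L_0) \to \muloc(\L_0)$ induced by $\Mut_k$ is a $\coeffs$-linear dg functor and Proposition \ref{prop:Mutcharacterization} identifies its action on the rank one indecomposables $P_A^m$, the functor must send the above representation to the corresponding higher-rank analogue of $P_B^{\mu^{-1}}$; this can be seen either by specialization from the universal family $\Pi_A \mapsto \Pi_B$ over $\coeffs[t,t^{-1}]$, or by decomposing into generalized eigenspaces of $\mu$ over the algebraic closure of $\coeffs$ and invoking the rank one case pointwise. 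The transport across $C'_k$ in the resulting representation is $\Id - \mu^{-1}$.

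The final step is to choose the identification $\cL \cong \cL'$ correctly. The identification used in the local computation, arising from the composition $\cL_{(k)} \subset \L_0 \supset {\cL_{(k)}}'$, differs from the one used to define $\cC'$ by an automorphism of $\cL_{(k)}$ exchanging its two boundary components, as noted in Section \ref{sec:localsystems}. After modifying by a single Dehn twist about $C_k$, the transport across $C'_k$ acquires an extra factor of $\mu$ and becomes $\mu(\Id - \mu^{-1}) = \mu - \Id$, differing from $\Id - \mu$ by an overall sign which is absorbed into the ambiguity of the framing $\sigma: \cC^\circ \to \Z/2\Z$ in the definition of $\muloc$ (Proposition \ref{prop:sigmambiguity}). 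The main obstacle is verifying that a single universal choice of Dehn twist exponent makes the formula consistent for all paths simultaneously and preserves the multiplicativity of holonomies under path concatenation; this amounts to checking that the extra twist factors cancel correctly when composing paths that together cross $C'_k$ various numbers of times, and is tightly bound up with the theorem's claim that this exponent is universal.
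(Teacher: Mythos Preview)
Your approach mirrors the paper's closely: both reduce to the local model $\muloc(\L_0)$, invoke Proposition \ref{prop:Mutcharacterization} to identify the image up to isomorphism, pass through the universal family $\Pi_A \mapsto \Pi_B$ to pin down the form of the correction factor, and handle higher rank by decomposing into generalized eigenspaces.

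There is one genuine slip. You absorb the residual sign --- the difference between $\mu - \Id$ and $\Id - \mu$ --- into the framing $\sigma$, citing Proposition \ref{prop:sigmambiguity}. But $\sigma$ is fixed data in the definition of $\muloc$; changing it yields a different category, not a different presentation of the same mutation functor on a fixed category. The paper instead uses the freedom, noted in the remark following Theorem \ref{thm:equiv}, to twist the construction of $\Mut_k$ by an automorphism of the identity functor of $\muloc(\partial \L_{(k)})$. This is what normalizes the constant $a$ (a priori $\pm 1$, since everything is defined over $\Z$) to $1$. That freedom lives in the choice of natural transformation making the right-hand square of Equation \ref{eq:localequiv} commute, not in $\sigma$.

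Your closing paragraph also leaves the universality of the Dehn-twist exponent somewhat unresolved. The paper handles this cleanly by working with the single object $\Pi_A$: the identification of stalks at either endpoint of $\gamma$ across $\cL^{(k)} \cong {\cL^{(k)}}'$ is multiplication by a unit of $\coeffs\Z$, i.e.\ a monomial; combining both endpoints gives $\cE'_\gamma = at^n(1-t)\cE_\gamma$ with $a$ and $n$ determined once and for all by this one computation. Specializing along $\Pi_A \to P_A^m$ then yields $a_m = am^n(1-m)$ for every $m$ simultaneously. The Dehn-twist freedom adjusts $n$ and the natural-transformation freedom adjusts $a$; no further consistency check among paths is required, because both freedoms are exercised at the level of the functor itself, not objectwise. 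Your concern about multiplicativity under concatenation thus dissolves: once $\Mut_k$ is fixed as a functor and the identification $\cL \cong \cL'$ is fixed as a homeomorphism, $\cE'$ is a genuine local system and its holonomy is automatically multiplicative.
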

\begin{proof}
The first bullet point is obvious: $\cL$ and $\cL'$ are canonically identified away from $C_k$. 

Let $\gamma$ be a path as in the second bullet point -- without loss of generality we take $\gamma(0)$ and $\gamma(1)$ to lie in opposite components of ${\cL_{(k)}}'$. Using the canonical homeomorphism $\cL^{(k)} \cong {\cL^{(k)}}'$ (which gave the isomorphism making the left square in Equation \ref{eq:localequivsubk} commute), we obtain an isomorphism between the stalks of $\cE$ and $\cE'$ at these points.

In Proposition \ref{prop:Mutcharacterization} we determined $\cE'|_{\L_{(k)}}$ up to isomorphism. 
If $\cE|_{\partial \L_(k)} \cong P^m_A$, then $\cE'_\gamma = a_\cE \cE_\gamma$ for some $a_\cE \in \coeffs^*$ since both are isomorphisms of 1-dimensional spaces. 
By functoriality, $a_\cE$ only depends on $\cE|_{\L_{(k)}}$ up to isomorphism, hence only on $m$; with this in mind we write $a_m$ rather than $a_\cE$.

On the other hand, suppose $\cE|_{\partial \L_{(k)}} \cong \Pi_A$ -- thus in passing we allow $\cE$ to lie in the category $\muloc^\infty(\L) \supset \muloc(\L)$ of Remark \ref{rmk:infrank} (which embeds now into the category of not necessarily finite-dimensional $Q_0$-representations). Then $\cE'|_{\partial \L_{(k)}} \cong \Pi_B$, since $\Pi_A$ and $\Pi_B$ are the projective covers of $S^1_A$ and $S^1_B$, which are exchanged by $\Mut_k$. 

Fix a particular isomorphism $\cE|_{\partial \L_{(k)}} \cong \Pi_A$ to identifys the stalks of $\cE$ at  $\gamma(0)$, $\gamma(1)$ with $\coeffs\Z$, and $\cE_\gamma$ with the identity. Independently, fix a particular isomorphism $\cE'|_{\partial \L_{(k)}} \cong \Pi_B$, identifying the stalks of $\cE'$ at $\gamma(0)$, $\gamma(1)$ with $\coeffs\Z$, and $\cE'_\gamma$ with multiplication by $1-t$. 

As noted above, the homeomorphism $\cL^{(k)} \cong {\cL^{(k)}}'$ identifies the stalks of $\cE$ and $\cE'$ at $\gamma(0)$ with each other, and moreover does so in a way that intertwines the monodromy around $\partial \L_{(k)}$. With respect to the above trivializations of each by $\coeffs\Z$, this identification is multiplication by a unit in $\coeffs \Z$ (i.e. a monomial). The same holds for the trivializations at $\gamma(1)$.  

As these units commute with multiplication by $1-t$, we combine them and conclude that with respect to the trivialization of the stalks of $\cE$, we have $\cE'_\gamma = at^n(1-t) \cE_\gamma$ for some $a \in \coeffs^*$ and some $n \in \Z$. Again, by functoriality these numbers depend only on $\cE$ up to isomorphism. More importantly, we have a morphism $\Pi_A \to P^m_A$ for all $m$. Now functoriality tells us that $a_m = am^n(1-m)$, independently of $m$.

Note that for a given $\L$ it is possible there are no objects $\cE$ whose restriction is isomorphic to $\Pi_A$. But the calculation is entirely about a gluing that takes place in a neighborhood of $\L_{(k)}$ hence we may take $\L$ to small enough to have such $\cE$.

The ambiguity left in $\Mut_k$ comes from the choice of natural transformation making the right square in Equation \ref{eq:localequivsubk} commute, and the fact that the homeomorphism $\cL \cong \cL'$ is only canonical up to Dehn twisting around $C_k$. There is an action of automorphisms of the identity functor of $\muloc(\partial \L_{(k)})$ on the choice of natural transformation -- with this we can fix $a$ to be $1$ (note that a priori $a = \pm 1$ since everything is defined over $\Z$, even if we have assumed $\coeffs$ to be a field in our analysis). Changing the homeomorphism $\cL \cong \cL'$ by a Dehn twist changes $n$, so likewise we may take $n = 1$. 

The reduction from the general case to the previous calculation follows by decomposing into eigenspaces of $\cE_{C_k}$. If it is semisimple there is nothing to check, otherwise functoriality tells us the calculation is compatible with extensions, giving us the case when there are nontrivial Jordan blocks.
\end{proof}

\begin{remark} 
It is possible to determine, rather than absorb, the ambiguities in the above proof,
by a microlocal calculation involving the GKS kernel. 
\end{remark}

%\input{COSFactor.tex}
%\input{COSNondegenerate.tex}
%\input{COSMainTheorem.tex}
%!TEX root = COS2.tex
\section{Relation to Cluster Theory} \label{sec:cluster}

The language of cluster algebra \cite{FZ} provides a natural setting in which to organize the results of the preceding sections. We begin by reviewing the general theory with certain extensions necessitated by the scope of our discussion. These are related to the implicit sign choices in the definition of the Kashiwara-Schapira sheaf, as well as the fact that a general curve configuration is not nondegenerate. We also discuss certain noncommutative analogues of cluster structures possessed by spaces of rank $n > 1$ Lagrangian branes.

We begin with the notion of a seed, the defining data of a cluster structure. Our notation follows that of \cite{FG,GHK}. 

\begin{definition}
A \textbf{seed} $s = (N,\{e_i\})$ is a lattice $N$ with skew-symmetric integral form $\{,\}$ and a finite collection $\{e_i\}_{i \in I} \subset N$ of distinct primitive elements indexed by a set $I$. 
\end{definition}

We write $[a]_+$ for $\max(a, 0)$. 

\begin{definition}
The mutation of $s$ at $k \in I$ is the seed $\mu_ks = (N,\{\mu_ke_i\})$, where 
\begin{equation}\label{eq:seedmutation}
\mu_ke_i = \begin{cases} e_i + [\{e_i,e_k\}]_+ e_k & i \neq k \\ -e_k & i = k. \end{cases}
\end{equation}
\end{definition}

To a seed we associate a quiver without oriented 2-cycles and with vertex set $\{v_i\}_{i \in I}$.  The number of arrows from $v_i$ to $v_j$ is $[\{e_i,e_j\}]_+$, and if the $e_i$ are a basis the seed is determined up to isomorphism by the quiver. 
Conversely, given such a quiver $Q$ we have a seed given by taking the vector space whose standard basis
vectors are enumerated by the vertices, and whose skew-symmetric form is given by the arrows.  In the literature one often only considers seeds of this form. One can also consider seeds related to skew-symmetrizable matrices, but these do not arise in our setting. We also suppress any explicit discussion of ``frozen indices''; this notion
is already included by 
allowing the $e_i$ to fail to generate $N$.

Given a seed $s = (N,\{e_i\})$, we write  $M = \Hom(N,\Z)$ and consider the dual algebraic tori
\[
\cX_s = \Spec \Z N, \quad \cA_s = \Spec \Z M,
\]
We let $z^n \in \Z N$ denote the monomial associated to $n \in N$, likewise $z^m \in \Z M$ for $m \in M$. 

\begin{definition}
For $k \in I$, the \textbf{cluster $\cX$- and $\cA$-transformations} $\mu_k: \cX_s \dasharrow \cX_{\mu_k s}$, $\mu_k: \cA_s \dasharrow \cA_{\mu_k s}$ are the rational maps defined by
\begin{equation}\label{eq:clustertrans}
\mu_k^* z^n = z^n(1+z^{e_k})^{\langle e_k,n \rangle}, \quad \mu_k^* z^m = z^m(1+z^{\{e_k,-\}})^{-\langle e_k, m\rangle},
\end{equation}
where $\langle e_k,n \rangle$ denotes the skew-symmetric pairing on $N$ and $\langle e_k, m \rangle$ the evaluation pairing. \end{definition}

Let $T$ be an infinite $|I|$-ary tree with edges labeled by $I$ so that the edges incident to a given vertex have distinct labels.  Fix a root $t_0 \in T_0$ and label it by the seed $s$.  Label the remaining $t\in T_0$ by seeds $s_t$ such that if $t$ and $t'$ are connected by an edge labeled $k$, and $t'$ is farther from $t_0$ than $t$, then $s_{t'} = \mu_k s_t$. 

\begin{definition}\label{def:clusterstructure}
A \textbf{cluster $\cX$-structure} on $Y$ is a collection $\{\cX_{s_t} \into Y\}_{t \in T_0}$ of open maps such that the images of $\cX_{s_t}$ and $\cX_{\mu_k s_t}$ are related by a cluster $\cX$-transformation for all $t$, $k$. A partial cluster $\cX$-structure is the same but with maps only for a subset of $T_0$, and a cluster $\cA$-structure the same but with $\cA$-tori and $\cA$-transformations. \end{definition}

\begin{remark}
Though we allow the $e_i$ to not be a basis of $N$, there is a map of seeds $\overline{s} := (\Z^n = \Z\{e_i\},\{e_i\}) \to (N,\{e_i\})$, where $\Z^n$ carries the skew-symmetric form pulled back from $N$. The associated tori are $\cA$- and $\cX$-tori in the standard sense of \cite{FG2}, and are related to those of $s$ by the commutative square
\[
\begin{CD}
\cA_s   @<<< \cA_{\overline{s}} \\
@VVV @VVV \\
\cX_s @<<<  \cX_{\overline{s}}
\end{CD}
\]
\end{remark}

We will require a slightly more general notion, in which signs may appear in the cluster transformations. 

\begin{definition}
A signing on a seed is a function $\sigma: \{e_i\} \to \Z/2\Z$.  Signings transform under mutation
by  $\sigma(e_i') = \sigma(e_i)$.  A $\sigma$-signed cluster structure is like a cluster structure, 
but with the transformation law 
$$ \mu_k^* z^n = z^n(1+ (-1)^{\sigma(e_k)} z^{e_k})^{\{e_k,n\}}, 
\quad \mu_k^* z^m = z^m(1+ (-1)^{\sigma(e_k)}  z^{\{e_k,-\}})^{-\langle e_k, m\rangle}
$$
\end{definition}

The set of signings on $(N,\{e_i\})$ can be identified with $\Hom(\Z\{e_i\},\Z/2\Z)$, hence it carries an action by $\Hom(N,\Z/2\Z)$ via pullback along $\Z\{e_i\} \to N$. Acting by $\Hom(N,\Z/2\Z)$ on the tori $\cX_{s_t}$ identifies cluster structures with signings differing by this action. If the $e_i$ are a basis of $N$ all signings are equivalent in this sense, so a signed cluster $\cX$-structure is only a nontrivial notion when this is not the case.

We now return to the setting of $\cC$ a curve configuration on a surface $\cL$, $W$ the associated 4-manifold with seed skeleton $\L$. Our terminology for $\L$ is justified by having an obvious seed associated to it, namely $(H_1(\cL,\Z),\{[C_i]\})$, where we take only the classes of the closed curves $C_i \in \cC^\circ$ and $H_1(\cL,\Z)$ carries its intersection pairing. Recall that the definition of the Kashiwara-Schapira sheaf, hence the category $\muloc(\L)$, implicitly depends on a function $\sigma: \cC^\circ \to \Z/2\Z$.

\begin{theorem}\label{thm:clustertheorem}
The moduli space $\cM_1(\L)$ carries a partial, $\sigma$-signed cluster $\cX$-structure with initial seed $(H_1(\cL,\Z),\{[C_i]\})$.
\end{theorem}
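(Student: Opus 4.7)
The plan is to assemble the cluster structure by packaging the holonomy calculation of Theorem \ref{prop:clustertrans} into an atlas on $\cM_1(\L)$, indexed by the tree of admissible mutation sequences. First I would identify the initial chart: by Proposition \ref{prop:fullyfaithful} and the openness statement of Proposition \ref{def:classical}, the inclusion $\Loc_1(\cL) \hookrightarrow \cM_1(\L)$ is an open immersion of stacks. Choosing a base stalk trivializes a rank-one local system to its holonomy character $H_1(\cL;\Z) \to \coeffs^\times$, so that $\Loc_1(\cL)$ is canonically identified with the $\cX$-torus $\cX_s = \Spec \coeffs[H_1(\cL;\Z)]$ of the seed $s = (H_1(\cL;\Z),\{[C_i]\})$.

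Next I would produce the remaining charts. For every sequence of indices $(i_1,\dotsc,i_n)$ that can be realized through simple configurations (possibly after Legendrian isotopies in between), Theorem \ref{thm:invariance} gives an equivalence $\Mut_{i_n}\circ \cdots \circ \Mut_{i_1}: \muloc(\L)\congto \muloc(\L^{(n)})$ and hence a biregular identification of the associated moduli spaces. Composing the inclusion $\Loc_1(\cL^{(n)})\hookrightarrow \cM_1(\L^{(n)})$ with this equivalence yields an open torus chart in $\cM_1(\L)$; by Proposition \ref{prop:globalcurvemutation}, applied inductively, its naturally associated seed is obtained from $s$ by the corresponding sequence of seed mutations \eqref{eq:seedmutation}, once one checks that reversing the co-orientation of the mutated curve realizes $[C'_k]=-[C_k]$ and that the Dehn twist in Definition \ref{def:mutation} realizes the linear formula for $\mu_k e_i$. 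This gives the partial tree of charts.

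The heart of the argument is to show that adjacent charts are related by a $\sigma$-signed cluster $\cX$-transformation. Restricting the holonomy formula of Theorem \ref{prop:clustertrans} to the rank-one case, the map $\Mut_k$ sends a local system with holonomies $\cE_\bullet$ to one whose holonomy along a loop $\gamma$ is $\cE_\gamma\cdot(1-\cE_{C_k})^{\epsilon}$, where $\epsilon$ is the signed count of intersections of $\gamma$ with $C'_k$, i.e.\ the intersection pairing $\{[C_k],[\gamma]\}$ up to a sign fixed by our orientation convention. Pulling back the coordinate function $z^n$ on $\cX_{\mu_k s}$ to $\cX_s$ thus yields $z^n(1-z^{e_k})^{\{e_k,n\}}$, which matches the cluster $\cX$-transformation of Definition \ref{def:clusterstructure} with $\sigma(e_k)=1$. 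The general case of an arbitrary $\sigma$-labeling then follows by tensoring with the rank-one local system on $\cL$ classified by $\sigma\in H^1(\cL;\Z/2\Z)$ (as in Proposition \ref{prop:sigmambiguity}), which twists the sign of each $z^{e_k}$ accordingly. The ``partial'' qualifier is recorded automatically, since our tree of mutation sequences only contains those realizable by simple curve configurations.

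The main obstacle I anticipate is entirely one of sign and orientation bookkeeping: Theorem \ref{prop:clustertrans} fixes the homeomorphism $\cL\cong\cL'$ only up to Dehn twists about $C_k$ and an overall scalar, so one must carefully identify the particular identification for which the holonomy formula becomes literally $(1-\cE_{C_k})^{\{e_k,n\}}$ rather than something differing by a monomial. Once this choice is made consistently for each elementary mutation, the compatibility of iterated mutations follows from the local nature of $\Mut_k$ (Remark after Theorem \ref{thm:invariance}), since the formulae for how classes in $H_1(\cL;\Z)$ transform are themselves local around the mutated disk and involve no global monodromy.
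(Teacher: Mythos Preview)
Your proposal is essentially the paper's own proof, which is little more than a summary citing Proposition~\ref{prop:globalcurvemutation}, Proposition~\ref{def:classical}, and Theorem~\ref{prop:clustertrans}; you have simply spelled out the bookkeeping in more detail.  One imprecision: the appearance of the sign $\sigma$ is not obtained by tensoring with a local system on $\cL$ classified by an element of $H^1(\cL;\Z/2\Z)$ --- rather, $\sigma$ lives in $H^2(\L,\cL;\Z/2\Z)$ via the gluing data of Definition~\ref{def:KSsheaf} (it multiplies the monodromy around each $C_i$ by $(-1)^{\sigma(C_i)}$ before attaching the disk), and Proposition~\ref{prop:sigmambiguity} only says that $\sigma$'s differing by something in the image of $H^1(\cL;\Z/2\Z)$ yield equivalent categories, not that all $\sigma$ arise from a fixed one this way.
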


\begin{proof}
This is basically just a summary of the previous sections' results in different terminology. The association of seeds to curve configurations intertwines mutations of both objects by Proposition \ref{prop:globalcurvemutation}. Since $N = H_1(\cL,\Z)$ we have $\cX_{s} \cong \Loc_1(\cL)$ (up to stabilizers), and there is an open map $\Loc_1(\cL) \to \cM_1(\L)$ by Definition \ref{def:classical}. Mutation at a simple closed curve $C_k$ induces another map $\Loc_1(\cL') \to \cM_1(\L)$, and the two are intertwined by a (possibly signed) cluster $\cX$-transformation by Proposition \ref{prop:clustertrans}. Iterated mutation of the curve configuration in general only induces a partial cluster structure since it may not be possible to perform arbitrary mutation sequences without creating self-intersections, following the discussion in Section \ref{sec:iteration}. 
\end{proof}

\begin{remark}
The choice of phrasing the previous theorem in terms of $\cM_1(\L)$ rather than $\R \cM_1(\L)$ is basically cosmetic; it would be perfectly natural to say the maps $\R \Loc_1(\cL) \to \R \cM_1(\L)$ assemble into a cluster structure on $\R \cM_1(\L)$.
\end{remark}

While the classes of the closed curves in $\cC$ are the only ones at which we can perform mutations, our definitions allow half-disks attached to open curves ending on $\partial \cL$. This would be necessary to discuss gluing of skeleta, which would be the skeletal version of the amalgamation process of \cite{FG2}.

Just as the rank one moduli space $\cM_1(\L)$ is a recepticle for maps from the torus $\Loc_1(\cL)$ of rank one local systems, the higher rank moduli spaces $\cM_n(\L)$ receive maps from spaces of higher rank local systems. The transition functions between these are determined by Proposition \ref{prop:clustertrans} as they are in the rank one case. Adopting the notation of Section \ref{sec:localsystems}, these are:

\begin{definition}
Let $C \subset \cL$ be an oriented simple closed curve. The rank-$n$ cluster $\cX$-transformation $\mu_C: \Loc_n(\cL) \dasharrow \Loc_n(\cL)$ is the following birational map. It is regular on the local systems $\cE$ for which  $\Id - \:\cE_{C_k}$ is invertible. Given such an $\cE$, its image $\cE' := \mu_C(\cE)$ is determined by the following properties:
\begin{itemize}
\item If $\gamma$ does not meet $C$ then $\cE'_\gamma = \cE_\gamma$.
\item Suppose $\gamma$ crosses $C$ exactly once, with $C$ oriented to the right of $\gamma$. Let $\gamma_{<C}$ be the subpath that starts at $\gamma(0)$ and ends at the crossing, $\gamma_{> C}$ the subpath which starts at the crossing and ends at $\gamma(1)$. Then
\[
\cE'_\gamma = \cE_{\gamma_{>C}}(\Id - \:\cE_{C})\cE_{\gamma_{< C}}.
\]
\end{itemize}
\end{definition}

With this in mind, we say that $\cM_n(\L)$ has a rank-$n$ (partial, signed) cluster $\cX$-structure by analogy with Theorem \ref{thm:clustertheorem}.

%!TEX root = COS2.tex
\newpage
\section{Examples from Almost Toric Geometry}\label{sec:almosttoric}
We recall the setting of almost toric geometry, a formalism for describing certain
Liouville integrable systems.
An example with no degenerate fibres is the restriction of the moment map of a toric variety
to the interior of the moment polytope.  More general almost-toric fibrations
\cite{LSy, Sym, KoS-aff} describe the situation in which certain degenerate fibres are allowed.  We restrict ourselves here to the case where the total space is 4 real dimensional. 

One begins with the moment map of a toric algebraic surface, $\overline{W} \to \overline{\Delta}$,
and possibly makes non-toric blowups along the boundary divisors.   Thus the total space 
of the boundary divisor has many nodes.  Degenerate fibres can be introduced into the interior
by the so-called ``nodal trade'', in which these singularities are pushed into interior fibers,
and the boundary divisor is correspondingly smoothed.  We then restrict attention to the interior
$W \to \Delta$. 

Thus for us the data of an almost-toric fibration is: the interior of a polygon, a certain number of marked points $d_1, \ldots, d_n$, branch cuts
from these to the boundary of the polytope, and specifications of how the integral affine structure
of the polygon changes along the branch cuts.  From this data, an almost toric fibration can be 
uniquely reconstructed: the $d_i$ sit below the singular fibers, and 
under the identification of the connection on $H_1(\mathrm{fibre})$ with the 
affine structure from the action coordinates, the changes across the branch 
cut specify the monodromy.  

As we are working in the complement of the boundary of the polytope, the total space of the
fibration is an exact symplectic manifold.  There is a unique point $0 \in \Delta$ above 
which the fiber is a smooth exact Lagrangian $\Sigma$.  
The Lagrangian thimbles 
 above the straight lines from $d$ to the degenerate fibers give Lagrangian 
disks $D_i$ which end on curves $C_i$ on $\Sigma$.  The total space of the almost-toric
fibration is the same as the space built from the $\Sigma$ and $C_i$ according to Definition
\ref{def:W}.  The union of $\Sigma$ and the $D_i$ is our Lagrangian skeleton $\L$. 

By appropriate change of presentation of the base as in \cite[Sec. 5.3]{Sym},
it can be arranged that the points $d_i$ and 
the branch cuts are in the complement of a neighborhood of $0$.
Beginning in this situation, we can consider the deformation of the integrable system
which brings some given branch point $d_i$ to $0$ along the line connecting them, and
then past.  Watching from the point of view of the unique exact fibre, one sees the 
curve $C_i$ collapse to a point and then re-expand -- that is, one sees precisely the Lagrangian
disk surgery of \cite{Y}.  Afterward, the configuration no longer satisfies the constraint that 
all branch cuts stay away from $0$; again a manipulation of the integral affine structure can 
restore this situation, at the cost of cutting and regluing the polytope $\Delta$.  For details, see
\cite[Sec. 2.3]{Via-one}.

\begin{example}
[Torus With One Disk]\label{ex:torusdisk}
Begin with the moment map $\P^2 \to \overline{\Delta}$, and make the ``nodal trade'' at one vertex of
the closed triangle $\overline{\Delta}$.  The resulting fibration, restricted to the interior of the triangle, has a single degenerate fibre.  Taking the  
thimble over the line connecting the degenerate fibre to 
the exact fibre $\Sigma$, we find that the skeleton is a torus with a disk attached.  

We are working in the complement of the boundary.  Having made the nodal trade at one vertex
has smoothed it out one of the boundary divisors, so the space $W$ is the complement in $\P^2$
of the union of a quadric and a line.  That is, it is the affine surface $W = \{ (x,y)\,|\, xy-1 \neq 0\} \subset \C^2$.
We refer to \cite[Prop. 11.8]{Sei2} for an account of this space via a Lefschetz pencil
presentation.   
\end{example}

\begin{example}[Vianna's Tori]\label{ex:P2}
Begin again with the moment map $\P^2 \to \overline{\Delta}$, and make the ``nodal trade'' at all
three vertices of the triangle.  Having smoothed out all singularities of the boundary divisor,
the fibration $W \to \Delta$ has total space equal to the complement of a smooth elliptic curve. 

Taking thimbles from the three degenerate fibres to the exact fibre $\Sigma = T^2$
gives three curves.  To determine which curves they are, consider the anticanonical moment
map image triangle with vertices $(-1,-1), (2,-1), (-1, 2)$.  The thimble
in the direction $(a, b)$ of the affine
structure on $\Delta$ is carried by the class $(-b, a)$ in $H_1(T^2, \Z)$, up to some universal
choice of sign conventions.  Thus, our curves are in the classes 
$(1, -1), (1, 2), (-2, -1)$. 

Vianna constructs infinitely many monotone tori in $\P^2$ \cite{Via}.
His construction can be identified with the iterated disk surgery we have given here, 
via the dictionary described above 
between the disk surgery prescription and degenerations of the almost toric picture.  

Assuming $Fuk(W) \cong \mu loc(\L)$, one can give a 
cluster-theoretic reason why there are infinitely many inequivalent tori here.
Indeed, Hamiltonian isotopic tori necessarily give rise to the same objects in $Fuk(W)$, 
hence the same cluster charts on $\cM_1(\L)$.  On the other hand, each cluster chart gives rise to a torus as we have described.  The cluster structure on $\cM_1(\L)$ is determined by the quiver defined by the intersection pairings: an oriented 3-cycle with all of its edges tripled.  As this is not a Dynkin quiver, 
there are infinitely many clusters hence infinitely many distinct tori \cite{FZ2}. 
The above argument would show that these tori are non-isotopic in $W$.  More precisely, one should also argue that distinct tori in the usual $\cX$-variety have distinct intersections with the image of the $\cA$-variety, as this image is what is directly related to $\cM_1(\L)$ by Theorem \ref{thm:clustertheorem}.  We note that 
Vianna proves the stronger statement that the corresponding monotone tori 
are not Hamiltonian isotopic even in $\P^2$. 
\end{example}

\begin{example}[Keating's Tori] 
Consider the algebraic surface singularity 
$x^p + y^q + z^r + axyz = 0$, where $\frac{1}{p} + \frac{1}{q} + \frac{1}{r} \le 1$.  We write $T_{p,q,r}$ for its Milnor fibre; this is 
a Stein space, hence a Weinstein 4-manifold.  This space was studied in \cite{Kea}, where exact
Lagrangian tori are constructed for the purpose of showing that vanishing cycles cannot
split generate the Fukaya category of $T_{p,q,r}$.  

These spaces can be constructed from the almost toric point of view as follows. 
Begin with $\P^2 \to \overline{\Delta}$.  Blow up the three boundary divisors, respectively,
$p$, $q$, and $r$ times at distinct points.  Make the nodal trades, so as to introduce 
$p + q + r$ degenerate fibres.  On the corresponding exact Lagrangian fibre, the thimbles
determine $p$ + $q$ + $r$ curves on $\Sigma$ in the respective classes $(0,-1)$,  $(1,0)$, $(-1,1)$. 
The corresponding quiver has $p + q + r$ vertices; one arrow
from each of the $p$ vertices to each of the $q$ vertices, one arrow from each of the $q$ vertices
to each of the $r$ vertices, and one arrow from each of the $r$ vertices to each of the $p$ vertices,
for a total of $pq + qr + rp$ arrows, which participate in $pqr$ 3-cycles. 
The tori resulting from the present construction 
are those of \cite{Kea}.  
\end{example}

In these almost toric examples, we are working in the complement of a divisor linearly equivalent to 
the total transform under blowup of the toric boundary, hence the spaces $W$ are (log)
Calabi-Yau surfaces.  By construction they come with fibrations by Lagrangian tori,
i.e., are in the setting appropriate to \cite{SYZ} mirror symmetry.  This has been studied
for these surfaces using both tropical \cite{GHK-mir} and to some extent symplectic \cite{Pas}
techniques.  

From the above description, it can be seen that the moduli space $\cM_1(\L)$ is, in the almost
toric case, meant to be the moduli space of objects in the Fukaya category of $W$ in
the class of a torus fibre.  That is, it is the SYZ mirror, 
and our perspective matches that of \cite{GHK,GHKK}. 
%!TEX root = COS2.tex
\appendix
\section{Appendix: Background on constructible sheaves}\label{app:sheaves}
A sheaf can be thought of as a family of (complexes of) $\coeffs$-modules, parameterized by
$X$; in particular, for a sheaf $\cF$ and a point $x \in X$, there is a $\coeffs$-module 
$\cF_x$ called the stalk of $\cF$ at $x$.   Sheaves are the natural 
coefficients for Cech cohomology; i.e., it makes sense to write $H^*(X, \cF)$.  
Constructible sheaves are those for which there is some stratification 
of $X$ such that the $\cF_x$ remain locally constant along strata.

In this section, we very briefly recall from \cite{KS} some formal manipulations
which can be performed on constructible sheaves: 
functors between sheaf categories and their basic properties; integral
kernels; microsupport; and from \cite{GKS}, the action of contact isotopy on
the sheaf category.  

Given a manifold or stratified space $X$ and a commutative ring $\coeffs$, we write $sh(X;\coeffs)$ (or simply $sh(X)$) for the dg version of the derived category of constructible sheaves of $\coeffs_X$-modules on $X$.
That is, it is the quotient \cite{Kel3, Dri} of the dg category of complexes of sheaves with constructible cohomology by 
the acyclic complexes.  %\sayHW{Perfect stalks? Bounded?}  
The formalism of \cite{KS} makes good sense in this context; we refer to \cite{N1} for 
some details on this point, which we ignore for the remainder of the appendix. 

\subsection{Functors between sheaf categories}

\subsubsection{Six functors}

Given a space $X$, and a sheaf $\cF$, there are adjoint functors
$$\cdot \otimes \cF :sh(X) \leftrightarrow sh(X) : \underline{\Hom}(\cF, \cdot) $$

Given a map of spaces $f: Y \to X$, one obtains two pairs of adjoint functors
$$f^*: sh(X) \leftrightarrow sh(Y): f_*$$
$$f_!: sh(Y) \leftrightarrow sh(X): f^!$$

The left adjoints are easier to understand at the level of stalks: for a point $p$,
\begin{eqnarray*}
(\cF \otimes \cF')_p & = &  \cF_p \otimes \cF'_p \\
(f^*\cF)_p & = & f^*(\cF_p) \\ 
f_!(\cG)_p & = & H_c^*(f^{-1}(p), \cG) 
\end{eqnarray*}

The right adjoints are easier to understand at the level of sections: 
for an open set $U$, 
\begin{eqnarray*}
H^*(U, \underline{\Hom}(\cF, \cF')) &  =  &
\Hom(\cF|_U, \cF'|_U) \\
H^*(U, f_* \cG) &  = & H^*(f^{-1}(U), \cG) \\
H^*(U, f^! \cF) & = & \D H^*_c(U, f^* \D \cF) 
\end{eqnarray*}

The last line is written in terms of the Verdier duality operation ---
an
anti-involution $\D: sh(M;  \coeffs) \to sh(M;  \coeffs)$.  
It interchanges shrieks and stars ---
$\D f_* = f_! \D$ and $\D f^* = f^! \D$ --- so can be used to calculate the 
shriek pullback.  

The shrieks and stars are directly related in two cases: 
when $f$ is proper, we have $f_! = f_*$; when $f$ is a smooth fibration, the sheafification
of Poincar\'e duality asserts
$f^! = f^*[\dim Z - \dim Y]$.  It follows by considering the map to a point that 
$\D \coeffs_M = \coeffs_M [\dim M]$; one recovers the usual Poincar\'e duality 
from this as
$H^*(M, \coeffs_M[\dim M]) = \D H_c^*(U, \coeffs_M)$, where now the operation $\D$ is
just the linear duality of complexes of vector spaces. 

If $\overline{M}$ is a manifold with boundary, and $j: M \to \overline{M}$ is the inclusion of its
interior, then note that $j_* \coeffs_{M} = \coeffs_{\overline{M}}$.  Taking Verdier duals, we see
$j_! \coeffs_{M}[\dim M] = \D \coeffs_{\overline{M}}$.  We use this in the following
form: if $f: \overline{M} \to N$ is any map to a manifold (without boundary),  then 
$$f^! \coeffs_N = f^! \D \coeffs_N[\dim N] = \D f^* \coeffs_N[\dim N] = \D \coeffs_M[\dim N]
= \coeffs_M[\dim M - \dim N]$$ 

The contortions above to compute $f^!$ are inevitable 
-- this operation has a certain irreducible
complexity (which can be hidden inside the Poincar\'e-Verdier duality, but this in turn is nontrivial
to compute).  However, when $f$ is
the inclusion of a closed subset, $f^!$ extracts the sections supported on that
subset.

\subsubsection{Base change}

Given another map $g: Z \to Y$, we write also $g: Z \times_Y X \to X$ and
$f: Z \times_Y X \to Z$ for the maps induced on the fibre product.  The base change theorems
assert the following relations: $f_! g^* = f_! g^*: Sh(X) \to Sh(Z)$ and similarly the other three
$f_* g^! = g^! f_*$, and $g_! f^* = f^* g_!$, and $g_* f^! = f^! g_*$.  

E.g., if $g: U \to Y$ is an open inclusion, then
$$g^* f_* \cF = g^! f_* \cF = f_* g^! \cF = f_* g^* \cF = f_*(\cF|_{f^{-1}(U)})$$
Taking global sections (i.e. cohomology, i.e. pushing forward to a point), one has
$H^*(U, f_* \cF) = H^*(f^{-1}(U), \cF)$.  This is usually given as the definition of $f_*$. 
Taking $U = Y$ and $\cF = \coeffs$, we have $H^*(Y, f_* \coeffs) = H^*(X, \coeffs)$. 
Expanding out $f_* \coeffs$ into its cohomology sheaves gives rise to the Leray spectral
sequence; this ability to factor cohomological calculations is one of the main 
classical uses of sheaf theory.

\subsubsection{Recollement}

Consider the inclusion of an open subset $U$ and its closed complement $V$ into $Y$, 
$$j: U \rightarrow Y \leftarrow V: i$$
Here, $j^* = j^!$ and $i_* = i_!$.  Because $U \cap V = \emptyset$, all compositions 
involved in the base change formula vanish.  Moreover (because $U$ and $V$ cover), we
have exact triangles 
$$ i_! i^! \to \mathbf{1} \to j_* j^* \xrightarrow{[1]}  \qquad \qquad \qquad
j_! j^! \to \mathbf{1} \to i_* i^* \xrightarrow{[1]}$$

These sequences are the sheaf-theoretic incarnations of excision: applied to the
constant sheaf on $Y$ and pushed forward to a point, one recovers
$$ H^*(V, i^! \coeffs) \to H^*(Y, \coeffs) \to H^*(U, \coeffs) \xrightarrow{[1]}
\qquad \qquad \qquad H^*_c(U, \coeffs) \to H^*_c(Y, \coeffs) \to 
H^*_c(V, \coeffs) \xrightarrow{[1]}$$

These allow us to understand the category $sh(Y)$ in terms of the categories $sh(U)$, 
$sh(V)$, and the data prescribing the connecting morphism in one of 
the above exact triangles; see e.g. \cite[1.4.3]{BBD}.
Nadler has suggested of how this formalism may be applied to
the Fukaya category in \cite{N2}.  

Recollement has historically been used for gluing sheaf categories together from local pieces.
Note however that our approach in the present article has been at least grammatically different: we view the 
global category as the global sections of a sheaf of categories, hence work with an open
cover rather than with a decomposition as above. 

\subsubsection{Functors from kernels}
Additional functors between sheaf categories can be constructed via the formalism
of kernels \cite[Sec. 3.6]{KS}.  This works as follows.  Consider the product
 $$Y \xleftarrow{\pi_Y} Y \times X \xrightarrow{\pi_X} X$$ 
 Given any sheaf $\cK$ on $Y \times X$, one gets
two pairs of adjoint functors: 

$$\cK^*: sh(X) \leftrightarrow sh(Y): \cK_*$$
$$\cK_!: sh(Y) \leftrightarrow sh(X): \cK^!$$

Their definitions are as follows.  Let $\cG$ be a sheaf on $Y$ and $\cF$ a sheaf on $X$. 

\begin{eqnarray*}
\cK^* : \cF & \mapsto & \pi_{Y!} (\cK \otimes \pi_X^* \cF) \\
\cK_! : \cG & \mapsto & \pi_{X!} (\cK \otimes \pi_Y^* \cG) \\
\cK_* : \cG & \mapsto & \pi_{X*} \underline{Hom}( \cK, \pi_Y^! \cG) \\
\cK^! : \cF & \mapsto & \pi_{Y*} \underline{Hom}( \cK, \pi_X^! \cF) \\
\end{eqnarray*}

\begin{example}
Let $f: Y \to X$ be a map.  Let $\coeffs(f) \in sh(Y \times X)$ be the constant sheaf on the graph
of $f$.  Then $\coeffs(f)^* = f^*$, $\coeffs(f)_* = f_*$, 
$\coeffs(f)^! = f^!$, $\coeffs(f)_! = f_!$
\end{example}

\begin{example} (Fourier-Sato transform)
Let $\Phi$ be the constant sheaf on the locus $\{\bf{x} \cdot \bf{y} \ge 0\,|\, (\bf{x}, \bf{y})
\in \R^n \times \R^n\}$.  This defines the so-called Fourier-Sato transforms
$$\Phi^* = \Phi_!: sh(\R^n) \to sh(\R^n)$$
$$\Phi_* = \Phi^!: sh(\R^n) \to sh(\R^n)$$
These transforms are generally considered restricted to the subcategory of conic sheaves, 
i.e., sheaves which are constant along any open ray emanating from the origin.  Here, 
the Fourier transform squares to pull-back by the antipodal map.  Its inverse is given by 
the kernel $\{\bf{x} \cdot \bf{y} \le 0\,|\, (\bf{x}, \bf{y})
\in \R^n \times \R^n\}$
See \cite[Sec. 3.7]{KS} for more. 
\end{example}

The functors induced by the kernel can be composed by convolving the kernels.  That
is, if one has $\cK' \in sh(Z \times Y)$ and $\cK \in sh(Y \times X)$, then with the evident notation
for projection to the factors, one defines
$$\cK' \circ \cK := \pi_{ZX!}(\pi_{ZY}^* \cK' \otimes \pi_{YX}^* \cK) \in sh(Z \times X)$$
This has the properties 
$$(\cK' \circ \cK)_* = \cK'_* \circ \cK_*, \qquad (\cK' \circ \cK)_! = \cK'_! \circ \cK_!, 
\qquad (\cK' \circ \cK)^* = \cK^* \circ {\cK'}^*, \qquad (\cK' \circ \cK)^! = \cK^! \circ {\cK'}^!.$$

\subsubsection{Cutoff functors} \label{sec:cutoff} 
Let $j: U \subset X$ be the inclusion of an open subset.  
Then $j_!$ and $j_*$ are fully faithful.  

Assume in addition that $X \setminus \overline{U}$ is cylindrical, and equipped with the structure
$X \setminus \overline{U} = \R_+ \times \partial U$.  We say a sheaf is 
{\em cylindrical past the boundary} if its restriction to any ray $\R_+ \times u \subset
\R_+ \times \partial U = X \setminus \overline{U}$ is constant.  

\begin{lemma}
The restriction of $j^! = j^*$ to the subcategory of cylindrical sheaves 
is an equivalence of categories.  
\end{lemma}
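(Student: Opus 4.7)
The plan is to build an explicit quasi-inverse $\Phi$ to $j^*$ by cylindrical extension, then verify both compositions. Writing $\overline{U} = X \setminus Z$ for the closure of $U$ in $X$, where $Z = \R_+ \times \partial U$, let $\hat{\jmath}: U \hookrightarrow \overline{U}$ denote the open inclusion, and let $r: X \to \overline{U}$ be the continuous retraction that is the identity on $\overline{U}$ and the projection $(t,u) \mapsto u$ on $Z$ (this is continuous because the cylindrical end is attached to $\overline{U}$ along $\partial U$). I would set $\Phi(\cG) := r^* \hat{\jmath}_* \cG$; by construction this is cylindrical, since on $Z$ it is $p^*$ of a sheaf on $\partial U$, with $p : Z \to \partial U$ the projection.

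One composition is formal: since $r \circ j = \hat{\jmath}$, we have $j^* \Phi(\cG) = \hat{\jmath}^* \hat{\jmath}_* \cG \cong \cG$, the last isomorphism being the counit for the open inclusion $\hat{\jmath}$. The substantive step is to show $\Phi \circ j^* \cong \mathrm{id}$ on the cylindrical subcategory. Given a cylindrical $\cF$, the adjunction $(r^*, r_*)$ produces a candidate map $\cF \to r^* r_* \cF$, and the whole claim reduces to establishing a natural isomorphism $r_* \cF \cong \hat{\jmath}_* j^* \cF$ on $\overline{U}$. I would check this stalkwise: on $U$ it is tautological; on $\partial U$ one must identify the stalk $(r_* \cF)_u$, computed as the colimit of $\cF(V)$ over neighborhoods $V \ni u$ meeting both $U$ and $Z$, with the $U$-side nearby cycle $\mathrm{colim}_{\epsilon}\, \cF(B_\epsilon(u) \cap U)$.

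The hard part will be precisely this boundary stalk identification, which is where the cylindricality hypothesis does its work. Cylindricality forces the generization maps from $\cF|_{\partial U}$ into the rays of $Z$ to be isomorphisms (via constructibility along the cylindrical stratification near $\partial U$), so the contribution of $V \cap Z$ to the colimit computing $(r_* \cF)_u$ collapses to that of $V \cap U$, leaving precisely the nearby cycle from the $U$-side. Once the stalk comparison on $\overline{U}$ is settled, applying $r^*$ transports the isomorphism to all of $X$, since both $\cF$ and $\Phi(j^* \cF)$ are cylindrical and hence pulled back from $\overline{U}$ along $r$ on the cylindrical end, yielding $\cF \cong \Phi(j^* \cF)$.
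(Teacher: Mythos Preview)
The paper states this lemma without proof, so there is nothing to compare against directly. Your approach---building a quasi-inverse $\Phi = r^* \hat{\jmath}_*$ via the retraction $r: X \to \overline{U}$---is the natural one and the outline is sound. Two points:

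The adjunction map goes the other way: the counit of $(r^*, r_*)$ is $r^* r_* \cF \to \cF$, not $\cF \to r^* r_* \cF$. This is harmless, since the counit is precisely the comparison map you want to check is an isomorphism.

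More substantively, your assertion that cylindricality forces the generization maps $\cF_u \to \cF_{(t,u)}$ for $u \in \partial U$ to be isomorphisms does not follow from the paper's literal hypothesis, which only demands constancy on the \emph{open} rays $\R_+ \times \{u\} \subset X \setminus \overline{U}$. Under that reading the lemma itself fails: for the open inclusion $k: Z = X \setminus \overline{U} \hookrightarrow X$, the sheaf $k_! \coeffs_Z$ is constant on every open ray, is nonzero, yet has $j^*(k_!\coeffs_Z) = 0$. Both the lemma and your argument are correct under the evidently intended stronger hypothesis that $\cF$ be constant along the \emph{closed} rays $\{u\} \cup (\R_+ \times \{u\})$---equivalently, that $\cF$ be pulled back along $r$ in a neighborhood of the cylindrical end. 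You should make this hypothesis explicit rather than deriving it from constructibility, which does not suffice on its own.
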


Note however that $j_! j^!$ and $j_* j^*$ are not in this
case the identity.  We think of them as cutoff functors, and call the first the ``soft cutoff''
and the second the ``hard cutoff''. 
In case $U$ has many boundary components, we can decide independently on 
each whether to use  $j_!$ or $j_*$.  

Informally: when 
we are describing microlocal sheaf categories on some manifold $X$ with cylindrical end,
it is equivalent to allow the sheaves to go to infinity, or to cut them off at some point in
the cylinder, and moreover each cutoff can be co-oriented arbitrarily. 

\subsection{Microsupport}

Let $\cF$ be a sheaf on a manifold $M$.   The microsupport  $ss(\cF)$, introduced in \cite{KS}, 
is meant to capture the locus in $T^*M$ of obstructions to the propagation of sections of $\cF$. 
For instance, if $f: M \to \R$ is a function such that the graph of $df$ avoids $ss(\cF)$ over the 
locus $f^{-1}((a, b])$, then the restriction of sections is an isomorphism \cite[Prop. 5.2.1]{KS}:
 $$H^*(f^{-1}(-\infty, b], \cF) \xrightarrow{\sim} H^*(f^{-1}(-\infty, a], \cF)$$
The formal definition is a local version of the above criterion: 

\begin{definition} \label{def:microsupport} \cite[Chap. 5]{KS}
A point $p = (x, \xi) \in T^*M$ is in the microsupport of a sheaf $\cF$ if there are 
points $(x', \xi')$ arbitrarily close to $(x, \xi)$ and functions $f: M \to \R$
with $f(x') = 0, df(x') = \xi'$, such that:
if $c_f: \{x\,|\, f(x) \ge 0\} \to M$ is the inclusion, then $(c_f^! \cF)_{x'} \ne 0$. 
\end{definition}

Shriek pullback to a closed subset gives the local sections  
supported on that subset.  Thus the statement $(c_f^! \cF)_{x'} \ne 0$ is informally
read as: ``there is a section of $\cF$ beginning at $x'$ and propagating in 
the direction along which $f$ increases.''  Note that, taking the zero function, the support
of $\cF$ is contained in its microsupport. 

For us, microsupports are used as a way to specify certain categories of sheaves.  For a 
subset (usually conical Lagrangian) $L$ 
in $T^*M$, we write $sh_L(M; \coeffs)$ for the category of sheaves
on $M$ with coefficients in $\coeffs$ and microsupport in $L$.  For instance, the category
of local systems on $M$ is $sh_{0_M}(M; \coeffs)$. 

\subsubsection{Properties of the microsupport.} 

For sheaves constructible with respect to a given stratification, 
it is straightforward to show that the microsupport is contained within the union of the
conormals of the strata.  Since the microsupport is co-isotropic, it is in this case Lagrangian,
and necessarily a full dimensional subset of the union of conormals.

Finally, note that, 
per the definition, to show that $(x, \xi)$ is not in the microsupport, one needs to check a 
property of every function vanishing near $x$ with derivative near $\xi$, 
at every point near $x$. 
In fact, it is enough to check a function $f$ which is
stratified Morse at $x$.  Such functions need not exist for all $(x, \xi)$ with respect
to a given stratification, but because they will exist for general points in each
component of the microsupport.  Since microsupports are closed, they 
can be computed with stratified Morse functions \cite{GM}. 

Many properties of the microsupport are developed in \cite{KS}.  In particular: 
writing $\D \cF$ for the Verdier dual of $\cF$, the microsupports $ss(\cF)$ 
and $ss(\D \cF)$ are related by the antipodal map on cotangent fibres, and given 
an exact triangle, $A \to B \to C \xrightarrow{[1]}$, one has
$$ (ss(A) \setminus ss(B)) \cup (ss(B) \setminus ss(A)) \subset  ss(C) \subset ss(A) \cup ss(B)$$

Microsupport interacts well with integral kernels.  Given a conical Lagrangian $M \subset 
T^*(Y \times X)$, we have the convolution 
\begin{eqnarray*}
M_\star: ConLag(T^*Y) & \to & ConLag(T^*X) \\
L & \mapsto & \pi_X( \pi_Y^{-1} L \cap M) 
\end{eqnarray*}
and given a kernel $\cK \in sh(Y \times X)$ satisfying certain properness and non-characteristic
hypotheses (see \cite[Eq. 1.1]{GKS}) one has 
$$ss(\cK_! \cF) \subset ss(\cK)_\star ss(\cF)$$
The effect on microsupport of the other functors can be determined from
the above using Verdier duality and transposition.  In particular, since the
functors associated to a map $Y \to X$ are convolution with the conormal
to the graph, the above formula determines their effect on microsupports. 

We record the effect on microsupport of the hard and soft cutoff
functors. 

\begin{lemma} \label{lem:cutsupport}
Let $u: U \hookrightarrow M$ be the inclusion of an open set into a manifold
extending to an inclusion of a manifold with boundary $\overline{u}: \overline{U} 
\hookrightarrow M$.  Give $\partial U$ the boundary coorientation -- its positive conormal direction is out.  Let $\cF \in sh(M)$ be conical past the boundary.  Then:
$$ss(u_* u^* \cF) \subset ss(\cF)|_{\overline{U}} \cup T^+_{\partial U}M$$
$$ss(u_! u^! \cF) \subset ss(\cF)|_{\overline{U}} \cup T^-_{\partial U}M$$
\end{lemma}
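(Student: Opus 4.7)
The plan is to recognise that the two cutoff functors at hand are respectively tensoring and internal-Homming with a single sheaf on $M$, and then to combine the standard microsupport estimates for these operations with an explicit calculation of the microsupport of that sheaf. Since $u$ is an open embedding we have $u^{!}=u^{*}$, and standard adjunction manipulations give
$$
u_{!}u^{!}\cF \;\cong\; \coeffs_{U}\otimes^{L}\cF,
\qquad
u_{*}u^{*}\cF \;\cong\; \underline{\Hom}(\coeffs_{U},\cF),
$$
where $\coeffs_{U}:=u_{!}\coeffs$ denotes the extension-by-zero of the constant sheaf on $U$.

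Next I would compute $ss(\coeffs_{U})$ by hand. Since $\partial U$ is smooth and $\overline{U}$ is a manifold with boundary, the excision triangle $\coeffs_{U}\to\coeffs_{M}\to v_{*}\coeffs_{Z}\to[1]$, with $v\colon Z=M\setminus U\hookrightarrow M$ the closed complement, reduces the computation to the well-known model case of a closed half-space in Euclidean space via the microsupport triangle inequality. One finds $ss(\coeffs_{U})\subset 0_{\overline{U}}\cup T^{+}_{\partial U}M$, with $T^{+}_{\partial U}M$ the outward-cooriented half of the conormal bundle as in the statement. Antipodally, $ss\bigl(\underline{\Hom}(\coeffs_{U},-)\bigr)$ will pick up the opposite half-ray $T^{-}_{\partial U}M$.

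From here the standard estimates $ss(\cA\otimes\cB)\subset ss(\cA)+ss(\cB)$ and $ss\bigl(\underline{\Hom}(\cA,\cB)\bigr)\subset -ss(\cA)+ss(\cB)$ (with $-$ denoting the fibrewise antipode) give the claimed containments, provided one knows that the Minkowski sum $ss(\cF)|_{\partial U}+T^{\pm}_{\partial U}M$ does not stray outside $ss(\cF)|_{\overline U}\cup T^{\pm}_{\partial U}M$. This is exactly where the hypothesis that $\cF$ is conical past the boundary is used: on $M\setminus\overline{U}=\R_{+}\times\partial U$ the sheaf is the pullback of $\cF|_{\partial U}$ along the projection to $\partial U$, so its microsupport there is ``horizontal'', i.e.\ contained in the kernel of the differential of that projection. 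Passing to the limit along the rays shows that any vertical (conormal to $\partial U$) component of $ss(\cF)$ at a point of $\partial U$ must come from the $\overline{U}$-side and is therefore already recorded in $ss(\cF)|_{\overline U}$; this absorbs the dangerous terms in the sum.

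The main obstacle I expect is not the overall shape of the argument, which is structural, but rather the sign bookkeeping at $\partial U$: one must check that the conormal half-ray appearing in $ss(\coeffs_{U})$, together with its antipode in the $\underline{\Hom}$ case, matches the paper's convention that $T^{+}_{\partial U}M$ is the outward-cooriented half-bundle. In practice the cleanest way to nail down the signs is to run the one-dimensional test case $M=\R$, $U=(0,\infty)$, $\cF=\coeffs_{M}$, which yields $u_{*}u^{*}\cF=\coeffs_{[0,\infty)}$ and $u_{!}u^{!}\cF=\coeffs_{(0,\infty)}$, and to verify the microsupport of each directly from Definition \ref{def:microsupport}.
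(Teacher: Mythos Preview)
The paper states this lemma without proof in the background appendix and never cites it elsewhere, so there is no proof to compare against.  Your strategy --- rewrite the two cutoffs as $\coeffs_{U}\otimes(-)$ and $\underline{\Hom}(\coeffs_{U},-)$, compute $ss(\coeffs_{U})$, and invoke the standard microsupport bounds for $\otimes$ and $\underline{\Hom}$ --- is the natural one and is essentially how one argues this in \cite{KS}.

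Two remarks.  First, your sign anxiety is justified.  Your own one-dimensional check gives $ss(\coeffs_{(0,\infty)})$ containing the \emph{outward} conormal ray at $0$, so $u_{!}u^{!}$ adds the outward half, not $T^{-}$ as the lemma writes; either the $\pm$ in the displayed inclusions are transposed or ``out'' is meant as ``out of the cylindrical end $M\setminus\overline{U}$'', i.e.\ into $\overline{U}$.

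Second, the step where you try to collapse the Minkowski sum to a union does not go through, and in fact the union bound as printed is false.  Take $M=\R^{2}$, $U=\{x>0\}$, and $\cF=\coeffs_{\{y\ge 0\}}$, which is cylindrical past $\partial U$.  Then $u_{!}u^{!}\cF=\coeffs_{(0,\infty)}\boxtimes\coeffs_{[0,\infty)}$, whose microsupport over the origin is the full quadrant $\{\xi\le 0,\ \eta\ge 0\}$, not just the two coordinate rays $ss(\cF)_{(0,0)}\cup T^{\pm}_{\partial U}$.  So the ``dangerous terms'' you hoped the conical hypothesis would absorb are genuinely present in $ss(u_{!}u^{!}\cF)$; the conical hypothesis only makes $ss(\cF)$ horizontal on the \emph{outside} of $\overline{U}$ and says nothing about its shape along $\partial U$.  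What your argument actually proves (and what is true) is the bound with ``$\cup$'' replaced by the fiberwise sum ``$+$'' over $\partial U$, i.e.
\[
ss(u_{!}u^{!}\cF)\ \subset\ ss(\cF)|_{\overline{U}} + T^{\text{out}}_{\partial U}M,\qquad
ss(u_{*}u^{*}\cF)\ \subset\ ss(\cF)|_{\overline{U}} + T^{\text{in}}_{\partial U}M,
\]
which is the correct statement and is what your method yields once the non-characteristic hypothesis for the $\otimes$/$\underline{\Hom}$ estimate is checked (here the conical hypothesis does help, since the only place it can fail is when $ss(\cF)$ already contains the relevant conormal ray, in which case the conclusion is automatic).
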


\subsection{Contact isotopies}

Consistent with the expectation that constructible sheaves model the Fukaya category,
contact isotopies of $T^\infty M$ act on $sh(M)$. 
We recall the result as formulated in \cite{GKS}.  

\begin{theorem}\label{thm:GKS} \cite{GKS} 
Let $M$ be a manifold, $I$ an interval,
$T^\circ M$ the cotangent bundle minus the zero section, and $\Phi: T^\circ M \times I 
\to T^\circ M$ a smooth map.  Assume $\Phi( \cdot, 0)$ is the identity, and 
$\Phi(\cdot, t)$ is a homogenous (i.e. commutes with the scaling) symplectomorphism
for each $t$.  

Then there exists a unique closed conic Lagrangian in 
$\Lambda \subset T^\circ (M\times M \times I)$ such that 
$\Lambda_\star T_t^* I  \subset T^\circ (M\times M)$ is the graph of $\Phi(\cdot, t)$. 

Moreover, there exists a unique locally bounded sheaf 
$K_{\Phi}  \in sh(M \times M \times I)$ such that $ss(K_\Phi) = \Lambda$
and $K_\Phi|_{M \times M \times 0}$ is the constant sheaf on the diagonal. 
\end{theorem}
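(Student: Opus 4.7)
The argument falls into three pieces: constructing the Lagrangian $\Lambda$, establishing uniqueness of $K_\Phi$, and then constructing $K_\Phi$ itself. The Lagrangian construction is essentially a direct calculation; uniqueness of $K_\Phi$ is a propagation argument; existence of $K_\Phi$ is the deep step.

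For the first piece, the homogeneous isotopy $\Phi$ is generated by a unique time-dependent homogeneous Hamiltonian $H_t \colon T^\circ M \to \R$. I would take
\[
\Lambda = \{ (x, \xi, y, -\eta, t, \tau) \in T^\circ(M \times M \times I) \,:\, (y, \eta) = \Phi_t(x, \xi),\ \tau = -H_t(y, \eta) \},
\]
and verify from the Hamilton equations that $\Lambda$ is a closed conic Lagrangian whose slice $\Lambda_\star T^*_t I$ is the twisted graph of $\Phi_t$. Uniqueness is automatic: the slice condition pins down everything except the $\tau$-coordinate, and the Lagrangian condition forces $\tau = -H_t(y, \eta)$ on shell.

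For uniqueness of $K_\Phi$, suppose $K, K'$ both satisfy the hypotheses, and consider $\underline{\Hom}(K, K')$. Its microsupport is constrained by $\Lambda$ in such a way that the projection $p \colon M \times M \times I \to I$ is non-characteristic on it in the sense of \cite[Prop.~5.4.4]{KS}. Propagation along $p$ then implies that any map $K \to K'$ is determined by its restriction to $\{t = 0\}$; since both sheaves restrict to $\coeffs_{\Delta_M}$ at $t = 0$, there is a canonical isomorphism $K \cong K'$.

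For existence, the plan is to construct $K_\Phi$ locally in $t$ and glue using uniqueness. When $\Phi_t$ stays $C^1$-close to the identity, the twisted graph of $\Phi_t$ admits a generating function $S(x, y, t)$ defined in a neighborhood of $\Delta_M$, and a direct computation shows that the constant sheaf on $\{S \leq 0\}$, appropriately shifted, has microsupport equal to the corresponding piece of $\Lambda$ and restricts correctly at $t = 0$. Subdividing $I$ into intervals short enough that $\Phi_{t_{i+1}} \circ \Phi_{t_i}^{-1}$ is close to the identity, the small-time construction produces a candidate sheaf on each piece, and the uniqueness from the previous paragraph glues these into a global $K_\Phi$. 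The main obstacle is verifying equality (not merely containment) of the microsupport at each stage, and confirming the non-characteristic hypothesis throughout the patching; both are handled by careful application of the microlocal calculus of \cite[Chap.~5--6]{KS}, in particular the non-characteristic propagation theorem and the bounds on microsupport under the six operations.
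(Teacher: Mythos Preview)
The paper does not prove this theorem: it is stated in the appendix with the citation \cite{GKS} and used as a black box. So there is no proof in the paper to compare against.

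Your sketch is a reasonable outline of the actual argument in \cite{GKS}. The three-step decomposition (explicit $\Lambda$ via the homogeneous Hamiltonian, uniqueness by propagation along the $I$-direction, existence by a small-time construction glued via uniqueness) matches the structure of their proof. A couple of remarks on accuracy: for uniqueness, \cite{GKS} argues more directly by composing with the kernel of the inverse isotopy and showing the result must be the diagonal kernel, rather than by analyzing $\underline{\Hom}(K,K')$, though your propagation argument is in the same spirit. For existence, the original \cite{GKS} argument reduces to compactly supported isotopies and then to $\R^n$ via embedding, rather than invoking generating functions on a general $M$ directly; your generating-function-for-small-time approach is closer to later refinements (e.g., Guillermou's subsequent work) but is certainly a valid route. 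In any case, you have supplied more than the paper itself does here.
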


\begin{remark} A homogenous symplectomorphism can be given (up to rescaling)
by a contactomorphism at infinity.
\end{remark}

\begin{corollary}\label{cor:GKSequivalence} \cite{GKS} 
Convolution with the kernel $K_\Phi|_t$ induces an equivalence of categories 
$sh(M) \to sh(M)$.  Away from the zero section, the microsupport of the image of a sheaf under convolution is the image of its microsupport under $\Phi(\cdot, t)$.
\end{corollary}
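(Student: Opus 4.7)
The plan is to leverage Theorem \ref{thm:GKS} twice --- once for the isotopy $\Phi$ itself and once for its inverse isotopy --- so that the convolution functor is produced together with an explicit two-sided inverse. The microsupport statement will then follow from the general bound on microsupports of convolutions together with this inverse.

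First I would define the reverse isotopy $\Psi: T^\circ M \times I \to T^\circ M$ by $\Psi(p,t) = \Phi(\cdot,t)^{-1}(p)$, which is again a smooth family of homogeneous symplectomorphisms with $\Psi(\cdot,0) = \mathrm{id}$. Theorem \ref{thm:GKS} then produces a unique locally bounded sheaf $K_\Psi \in sh(M \times M \times I)$ whose microsupport at time $t$ is the graph of $\Phi(\cdot,t)^{-1}$. The candidate inverse to convolution with $K_\Phi|_t$ is convolution with $K_\Psi|_t$.

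Next I would show $K_\Phi|_t \circ K_\Psi|_t \cong \coeffs_{\Delta_M}$, the identity kernel. Form the family of kernels $K_\Phi \circ K_\Psi$ obtained by convolving in the $M$-variable while retaining the $I$-parameter. The bound $ss(\cK_! \cF) \subset ss(\cK) \star ss(\cF)$ shows that the microsupport of this family at each time $t$ is contained in the graph of $\Phi(\cdot,t) \circ \Phi(\cdot,t)^{-1}$, which is the identity symplectomorphism; hence the microsupport sits in the zero section of $T^*(M \times M)$ at each slice. At $t = 0$ the convolution restricts to $\coeffs_{\Delta_M}$. Recognizing $K_\Phi \circ K_\Psi$ as the kernel attached to the constant identity isotopy, the uniqueness clause of Theorem \ref{thm:GKS} forces $K_\Phi|_t \circ K_\Psi|_t \cong \coeffs_{\Delta_M}$ for all $t$, and the symmetric argument gives the other composition. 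Thus convolution with $K_\Phi|_t$ is an equivalence. The microsupport statement then follows by applying the convolution bound once in each direction: $ss((K_\Phi|_t)_* \cF) \subset \Phi(\cdot,t)(ss(\cF))$ away from the zero section, and the same bound applied to $K_\Psi|_t$ yields the reverse inclusion.

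The main obstacle will be making the uniqueness step rigorous: Theorem \ref{thm:GKS} is stated for a given smooth homogeneous isotopy, so one must verify that $K_\Phi \circ K_\Psi$, viewed as a sheaf on $M \times M \times I$, meets the hypotheses of the theorem for the constant identity isotopy. This requires checking that the convolution is locally bounded and has microsupport of the asserted form globally on $M \times M \times I$, not merely slicewise; the needed control comes from the non-characteristic intersection properties of the microsupports of $K_\Phi$ and $K_\Psi$ relative to the convolution projections, and ultimately rests on the propagation results of \cite{KS} that underpin \cite{GKS} in the first place.
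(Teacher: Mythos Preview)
Your approach is essentially the same as the paper's: use the inverse isotopy, convolve the two kernel families, and appeal to uniqueness in Theorem~\ref{thm:GKS} to identify the convolution with the identity kernel. You have supplied more detail than the paper's terse proof, and you have correctly identified the main technical wrinkle (checking that the convolved family satisfies the hypotheses of the uniqueness clause globally on $M \times M \times I$).

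One concrete slip: you write that the microsupport of $K_\Phi|_t \circ K_\Psi|_t$ ``sits in the zero section of $T^*(M \times M)$.'' That is not right. The graph of the identity homogeneous symplectomorphism of $T^\circ M$, viewed inside $T^\circ(M \times M)$ with the usual sign convention, is the conormal to the diagonal $\Delta_M$ (minus the zero section), not the zero section. This matters: if the microsupport were in the zero section the sheaf would be locally constant on all of $M \times M$, which $\coeffs_{\Delta_M}$ certainly is not. The paper's proof says exactly ``conormal to the diagonal.'' Once you make this correction, your argument goes through unchanged --- the uniqueness clause of Theorem~\ref{thm:GKS} applied to the constant identity isotopy still forces the convolution to be $\coeffs_{\Delta_M}$ for all $t$, and your two-sided microsupport bound is fine as stated.
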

\begin{proof}
We indicate how to derive this from Theorem \ref{thm:GKS}.
The inverse is given by the kernel coming from the inverse family of symplectomorphisms:
convolving the two families of kernels gives a kernel whose microsupport must lie in 
the conormal to the diagonal and is the constant sheaf there at time zero. 
\end{proof}

This is an extremely powerful
tool, and was used in \cite{GKS} to prove various non-displaceability theorems. 
In the present paper we use it to define mutation functors in Section \ref{sec:mutationfunctors}: given
a neighborhood of a Lagrangian skeleton small enough to be embeddable into a 
cotangent bundle, the result of \cite{GKS} amounts to an assertion that the microlocal sheaf category
depends only on the (singular) Legendrian skeleton at the boundary of this neighborhood. 
Thus we may isotope around this Legendrian at will, changing the 
topology of its Lagrangian cone in the process.  

\begin{example} (Reeb flow)
Let $\Phi(\cdot, t)$ be the flow of the Hamiltonian 
$H(\mathbf{q}, \mathbf{p}) = \mathbf{p}^2$ on $T^* \R^n$. 
Then for $t \ge 0$, the kernel $K_\Phi|_t$ is given by the constant sheaf
on the locus $|\bf{x} - \bf{y}| \le t$ in $\R^n \times \R^n$.  Convolution with this kernel acts as
an averaging operator: the stalk after convolution at $\bf{x}$ is the global sections
of the sheaf over the radius $t$ ball around $\bf{x}$.
\end{example}

\newpage

\bibliographystyle{plain}

\end{document}